\newtheorem{thm}{Theorem}
\newtheorem{cor}[thm]{Corollary}
\newtheorem{lemma}[thm]{Lemma}
\newtheorem{prop}[thm]{Proposition}
\theoremstyle{definition}
\newcommand{\sections}{\renewcommand{\thethm}{\thesection.\arabic{thm}}
           \setcounter{thm}{0}}
\newcommand{\nosubsections}{\renewcommand{\thethm}{\thesection.\arabic{thm}}
           \setcounter{thm}{0}}
\newcommand{\linnum}{\stepcounter{thm}\tag{\thethm}}
\newcommand{\co}{\colon}
\newcommand{\bbC}{\mathbb{C}}
\newcommand{\bbH}{\mathbb{H}}
\newcommand{\bbQ}{\mathbb{Q}}
\newcommand{\bbR}{\mathbb{R}}
\newcommand{\bbZ}{\mathbb{Z}}
\newcommand{\cH}{\mathcal{H}}
\newcommand{\cT}{\mathcal{T}}
\newcommand{\cZ}{\mathcal{Z}}
\newcommand{\za}{\alpha}
\newcommand{\zb}{\beta}
\newcommand{\zd}{\delta}
\newcommand{\zg}{\gamma}
\newcommand{\zh}{\eta}
\newcommand{\zi}{\iota}
\newcommand{\zj}{\psi}
\newcommand{\zl}{\lambda}
\newcommand{\zm}{\mu}
\newcommand{\zn}{\nu}
\newcommand{\zr}{\rho}
\newcommand{\zs}{\sigma}
\newcommand{\zt}{\tau}
\newcommand{\zv}{\varphi}
\newcommand{\zF}{\Phi}
\newcommand{\zG}{\Gamma}
\newcommand{\zJ}{\Psi}
\newcommand{\zL}{\Lambda}
\newcommand{\bD}{\mathbf{D}}
\newcommand{\Z}{\mathbb{Z}}
\newcommand{\geomsize}{\mathrm{geomsize}}
\newcommand{\algsize}{\mathrm{algsize}}
\newcommand{\height}{\mathrm{ht}}
\newcommand{\nonslope}{\odot}
\newcommand{\diam}{\mathrm{diam}}
\newcommand{\Mod}{\mathrm{Mod}}
\newcommand{\cW}{\mathcal{W}}
\newcommand{\cM}{\mathcal{M}}
\newcommand{\cE}{\mathcal{E}}
\newcommand{\norm}[1]{\left\lVert#1\right\rVert}
\begin{document}
\title[Rationality is decidable]{Rationality is decidable for Nearly Euclidean Thurston maps}

\begin{author}{William Floyd}
\address{Department of Mathematics\\ Virginia Tech\\
Blacksburg, VA 24061\\ USA}
\email{floyd@math.vt.edu}
\urladdr{http://www.math.vt.edu/people/floyd}
\end{author}

\begin{author}{Walter Parry}
\address{Department of Mathematics and Statistics\\ 
Eastern Michigan University\\
Ypsilanti, MI 48197\\ USA}
\email{walter.parry@emich.edu}
\end{author}

\begin{author}{Kevin M. Pilgrim}
\address{Department of Mathematics, Indiana University, Bloomington, 
IN 47405, USA}
\email{pilgrim@indiana.edu}
\end{author}

\date{\today}

\begin{abstract} Nearly Euclidean Thurston (NET) maps are described by
simple diagrams which admit a natural notion of size.  Given a size bound $C$, there are finitely many diagrams of size at most $C$.    Given a NET map $F$ presented by a diagram of size at most $C$, the problem of determining whether $F$ is equivalent to a rational function is, in theory, a finite computation.  We give bounds for the size of this computation in terms of $C$ and one other natural geometric quantity. This result partially explains the observed effectiveness of the computer program {\tt NETmap} in deciding rationality. 
\end{abstract}

\subjclass[2010]{Primary: 37F10; Secondary: 57M12}

\keywords{Thurston map, decidable}

\thanks{}
\maketitle

\tableofcontents 

\sections

\section{Introduction}\nosubsections

A \emph{Thurston map} $F: S^2 \to S^2$ is an orientation-preserving
branched covering of degree at least two for which the
\emph{postcritical set} $P_F=\cup_{n>0}F^n(C_F)$ is finite; here $C_F$
denotes the set of branch points of $F$.  Thurston maps originally
arose in fundamental classification problems in
one-complex-dimensional dynamics \cite{DH}.  If a finite set $P
\subset S^2$ is given, the collection of isotopy classes relative to $P$ of Thurston
maps $F$ with $F(P)\subset P$ and $P_F\subset P$ forms a countable semigroup under composition.  The
collection  $\mathcal{H}$ of all isotopy classes of Thurston maps obtained by
pre- and post-composing a given Thurston map $F$ with
orientation-preserving homeomorphisms fixing $P$--called its
\emph{pure modular group Hurwitz class}--has a very rich
algebraic structure; see \cite{BD0}.  Through this more recent
algebraic perspective, Thurston maps may now be fruitfully regarded as
analogs of elements of the well-studied mapping class groups. For
example, a finite collection of pairwise disjoint, mutually
non-homotopic curves on a surface that is left invariant by a mapping
class--a so-called \emph{reducing multicurve}--is typically regarded
as an obstruction to \emph{geometrization} of this class. Moreover, among such reducing multicurves, there exists a canonical one \cite[Cor. 13.3]{fm}.  Here,
geometrization means finding a representative of the class with
optimal geometric properties--a periodic or pseudo-Anosov map. On the
Thurston map side, W. Thurston's fundamental characterization and
rigidity theorem says the following. Typically, in the absence of an
obstruction--again defined as a multicurve with certain invariance
properties--a Thurston map is conjugate-up-to-isotopy, or
\emph{equivalent}, to a rational function, unique up to M\"obius
transformations. Again, among all such obstructions, there exists a canonical one \cite{kmp:canonical}, \cite{S}.

The current state of algorithms for computing with mapping class groups is quite advanced. For example, Margalit, Strenner, and Yurtta\c{s} \cite{MSY} have announced  a quadratic-time algorithm for locating reducing curves (and more). That is, given a concrete presentation of a mapping class in terms of a standard generating set, their algorithm (and implementation, {\tt Macaw}, available at \url{https://github.com/b5strbal/macaw}) locates in quadratic time a certain canonically defined reducing curve, if it exists. 

Our main result here is an incremental, but first, such quantitative
result for Thurston maps.  It is incremental because (i) it is
restricted to a special class of Thurston maps, called nearly
Euclidean Thurston (NET) maps, that we have been studying in
\cite{cfpp, fkklpps, fpp1, fpp2, P}, and (ii) our bounds are
unfortunately not quite explicit, due to our inability to effectively
estimate a certain geometric constant; see Section
\ref{secn:pf_thm_short_cf} below.  Our methods, however, employ
geometric themes also encountered in the study of mapping class
groups. In particular, continued fractions play an essential role in our study. 

A Thurston map $F$ is a NET map if $\#P_F=4$ and every branch point is simple, i.e has local degree $2$. Among NET maps are the atypical \emph{Euclidean} Thurston maps, which are equivalent to quotients of affine planar maps, and the question of their rationality is equivalent to an easy-to-verify condition on the eigenvalues of the derivative of this affine endomorphism.  Our sole focus here is on the more typical, non-Euclidean NET maps. A NET map is non-Euclidean, or typical, if and only if some element of $P_F$ is critical. 

Each NET map, up to equivalence, is describable by a simple \emph{NET map presentation diagram} $\bD$; see \cite{fpp1}.  Figure \ref{fig:rabbitpren} shows a presentation diagram for the Douady rabbit polynomial. 
  \begin{figure}
\centerline{\includegraphics{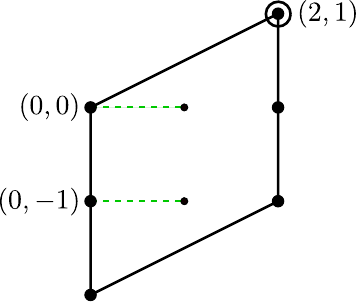}}
\caption{A presentation diagram for the Douady rabbit polynomial $f(z)=z^2+c$, where $0 \mapsto c \mapsto c^2+c \mapsto 0$, $\Im(c)>0$. }
\label{fig:rabbitpren}
  \end{figure}
Each presentation diagram shows a parallelogram spanned by two
linearly independent integer lattice vectors $2\lambda_1,
\lambda_2$. In Figure \ref{fig:rabbitpren}, $\lambda_1=(0,-1)$ and
$\lambda_2=(2,1)$.  The \emph{geometric size} $\geomsize(\bD)$ of a
NET map presentation diagram $\bD$ is defined as $\max\{|\lambda_1|,
|\lambda_2|\}$.  A NET map presentation diagram $\bD$ determines a
unique isotopy class of NET maps $F=F_{\bD}$. Here is how; see
\cite[Lemma 3.1(3)]{fpp1} for details. Let $\pi: \bbR^2 \to \bbR^2/\Gamma=:S^2$
denote the quotient map to the standard square pillowcase, where
$\Gamma = \langle x \mapsto 2\lambda - x:\lambda \in \bbZ^2\rangle$.
The set $P\subset S^2$ is the image of $\bbZ^2$ under the natural
projection. Let $b$ be the integer vector whose coordinates are
circled in the diagram. Let $\Psi: \bbR^2 \to \bbR^2$ be the affine
map $\Psi(x)=Ax+b$ where $A=[\lambda_1, \lambda_2]$ is the
corresponding column matrix. Let $G: (S^2,P) \to (S^2,P)$ be the
induced affine endomorphism; it is a Euclidean NET map. Finally, let
$F=G\circ H$, where $H: S^2 \to S^2$ is a homeomorphism obtained by
pushing the starting points along the images of
$\pi(\Psi^{-1}(\alpha_j))$ in the direction indicated by the dashed
green arcs $\za_j$ in Figure 1. The resulting map $F$ is a Thurston
map with $P_F\subseteq P$.  (We remark that in the notation of \cite[Lemma 3.1(3)]{fpp1}, the map $F=G\circ H$ where $G=\overline{id}\circ \phi$ and $H=\phi^{-1}\circ h \circ \phi$.) The map $F$ is a typical non-Euclidean NET map if
and only if there is at least one nontrivial dashed green arc and
$\#P_F=4$, so that $P_F=P$; these conditions are easy to check. The
degree of $F$ is the determinant $\det(A)$.

In this work, we always assume by conjugation that all NET maps are defined on the standard square pillowcase $S^2$ and have postcritical set $P$ as defined in the previous paragraph.

On $S^2-P$, each isotopy class of simple closed essential
nonperipheral unoriented curves contains a representative which lifts
to a Euclidean geodesic of rational slope. In this way we obtain a
bijection between such classes of curves and the set of their slopes,
the set of extended rational numbers, $\overline{\bbQ}=\bbQ \cup
\{\frac{1}{0}\}$. Given a NET map $F$ and a simple closed curve $\zg$
in $S^2-P$ with slope $s$, we let $c(s)$ denote the number of
essential nonperipheral connected components of $F^{-1}(\zg)$ and we
let $d(s)$ denote the (necessarily common) degree with which these
connected components map to $\zg$ \cite[Section 5]{cfpp}.  The
\emph{multiplier} of $s$ is $\zd(s):=c(s)/d(s)$.  We let $\nonslope$
stand for the union of isotopy classes of peripheral and inessential
curves. Via pullback, a NET map $F$ induces a \emph{slope function}
$\mu_F: \overline{\bbQ} \to \overline{\bbQ}\cup \{\nonslope\}$. It
also induces similarly an analytic self-map $\sigma_F: \bbH \to \bbH$,
where the upper half-plane $\bbH$ is naturally identified, via the
Weierstrass theory, with the Teichm\"uller space of $(S^2, P)$
\cite{DH}. Selinger \cite{S} shows that $\sigma_F$ extends canonically
to the Weil-Petersson completion of $\bbH$, which adjoins the set
$\overline{\bbQ}$ of cusps.  If $s,s'\in \overline{\bbQ}$ are slopes,
then $\mu_F(s)=s' \iff \sigma_F(t)=t'$, where $t=-1/s$ and $t'=-1/s'$
are the corresponding cusps.  We define the \emph{height} of
$\frac{p}{q}\in \overline{\bbQ}$ for integers $p$, $q$ with
$\gcd(p,q)=1$ as $\height(p/q)=\max\{|p|, |q|\}$.

An obstructed non-Euclidean NET map $F$ necessarily has a unique obstruction,
i.e. there is a single extended rational obstruction slope.  It is a
fixed point of $\zm_F$.  So we are interested in fixed points of
$\zm_F$.  Since fixed points of $\zm_F$ are negative reciprocals of
fixed points of $\zs_F$, we are also interested in cusps fixed by
$\zs_F$.  Most of our work involves making estimates about $\sigma_F$.

An elementary argument shows there are at most $O(C^{12})$
presentation diagrams $\bD$ with $\geomsize(\bD)\leq C$. Thus there is
an upper bound $H(C)$ for the height of the slope of an obstruction of
an obstructed NET map $F=F_{\bD}$ with $\geomsize(\bD) \leq C$. Less
obviously, there is usually also an upper bound $H(C)$ for the height
of every fixed point of $\zm_F$.  Our main result gives an estimate
for $H(C)$.

\begin{thm}[Height bound for fixed cusps]
\label{thm:finite} 
Let $\bD$ be a non-Euclidean NET map presentation diagram with
$\geomsize(\mathbf{D})\leq C$, and $F:=F_{\mathbf{D}}$ the
corresponding NET map.  Then there exists a positive integer $H(C)$
with the following properties.
\begin{enumerate} 
  \item Let $t$ be a cusp fixed by $\zs_F$, and assume that $t$ is the
negative reciprocal of the unique obstruction slope if $F$ has an obstruction
with multiplier 1.  Then 
\[ \height(t) \leq H(C)\le (AC)^{9\cdot 19^N}.\] 
The exponent $N$ is an upper bound on the length of the continued
fraction expansion of a cusp fixed by $\zs_F$, and satisfies
$N\lesssim (1-c(\cH))^{-1}\cdot \log C$ where $0 < c(\cH)<1$.  The
implied constant and the real number $A$ are universal.  The constant
$c(\cH)$ depends only on the modular group Hurwitz class $\cH$ of $F$
and is defined as an upper bound on the size of the Teichm\"uller
(hyperbolic) derivative of $\sigma_F$ on a universal cocompact subset
of Teichm\"uller space, independent of $F$.
  \item The function $H(C)$ cannot be taken to be a polynomial in $C$
even if we restrict attention to only negative reciprocals of slopes
of obstructions.
  \end{enumerate}
\end{thm}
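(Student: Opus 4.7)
The strategy is to describe a fixed cusp $t$ of $\zs_F$ by its continued fraction expansion $t = [a_0; a_1, \ldots, a_N]$ and to bound separately (i) the depth $N$ and (ii) each partial quotient $a_j$, in terms of $C$ and $c(\cH)$. The claimed estimate then follows from the standard bound $\height(t) \leq \prod_j (a_j+1)$ after one takes into account the geometric compounding of CF complexity across Farey levels. The special hypothesis on fixed cusps with multiplier-1 obstructions is inserted precisely to rule out parabolic fixed points at which $\zs_F$ is an isometry and no contraction is available, which would otherwise defeat this approach.

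To bound $N$, I would work with Selinger's canonical extension of $\zs_F$ to the cusps and exploit the universal cocompact subset $K \subset \bbH$ on which the Teichm\"uller derivative is at most $c(\cH) < 1$. The key geometric picture is a Teichm\"uller geodesic $\zg$ issuing from $t$ into $K$: the Farey triangles that $\zg$ crosses on its way out of a horoball neighborhood of $t$ are in bijection with the convergents of $t$. Iterating $\zs_F$ along $\zg$ contracts distances by a factor of at least $c(\cH)$ per step once inside $K$, while the starting displacement of $\zg$ from $K$, measured through the affine data $\Psi(x)=Ax+b$ on the pillowcase cover with $|A|\leq O(C)$ and the twisting encoded by $H$, is $O(\log C)$ Teichm\"uller units. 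The contraction forces $N \lesssim (1-c(\cH))^{-1}\log C$.

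Next I would bound each $a_j$ using explicit combinatorial control of $\zm_F$ at each Farey-descent level. For a slope $s=p/q$, lifts of representative geodesics under $\Psi$ and pushing by $H$ produce a finite set of candidate image slopes whose numerators and denominators are polynomially bounded in $C\cdot \height(s)$ with a small explicit exponent; from the pillowcase book-keeping I expect this exponent to be at most $9$. Since the fixed-cusp condition $\zm_F(s)=s$ is a quadratic Diophantine relation, CF descent on it branches only finitely often, and the number of branchings that survive at each level should be at most $19$; compounding the per-level polynomial bound across $N$ such levels produces the exponent $9\cdot 19^N$. For part (2), I would construct an explicit family $\{\bD_n\}$ with $\geomsize(\bD_n)=O(n)$ whose associated obstruction slopes have height super-polynomial in $n$; natural candidates arise by precomposing a fixed obstructed NET map with high-iterate Dehn twists along a prescribed curve, which pushes the obstruction slope down the Stern--Brocot tree, and a direct computation from the presentation-diagram formalism verifies the required growth.

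The main obstacle is the bound on $N$ in the second paragraph: translating the asymptotic-contraction hypothesis on the cocompact set $K$ into an effective logarithmic bound requires a quantitative version of Selinger's cusp extension that matches the horocycle/Farey structure at $t$ with the affine geometry coming from $\bD$, and it is precisely at this step that the (non-effective) constant $c(\cH)$ enters and ultimately prevents a fully explicit $H(C)$. The per-step arithmetic analysis in the third paragraph is lengthy but essentially a careful bookkeeping of lifts through $\Psi$ and of Dehn-twist compositions in $H$, and the non-polynomial lower bound is a targeted construction rather than a conceptual hurdle.
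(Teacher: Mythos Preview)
Your high-level strategy for part (1)---bound the continued-fraction depth $N$ via contraction of $\sigma_F$ on a cocompact set, then bound each partial quotient $a_j$---matches the paper, and your sketch of the $N$-bound is essentially the paper's Theorem \ref{thm:short_cf}: a geodesic from the basepoint to the fixed cusp crosses $\sim n$ Farey quadrilaterals, each contributing a definite length inside the cocompact set, so contraction by $c(\cH)$ combined with an $O(\log C)$ displacement bound forces $n\lesssim(1-c(\cH))^{-1}\log C$. (One small correction: the argument uses a single application of $\sigma_F$ to the geodesic, not iteration.)

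The genuine gap is in your third paragraph. You propose to bound $a_j$ via a ``quadratic Diophantine relation'' for $\mu_F(s)=s$ with ``at most 19 surviving branchings per level,'' but no such mechanism exists in the paper, and I do not see how to make your description into a proof. The paper's tool is completely different: a \emph{quantitative excluded-interval} theorem (Theorem \ref{thm:qxi}). For any cusp $u$ of height $h$ (assuming $u$ is not a multiplier-1 obstruction), the deleted interval $(u-\rho,u)\cup(u,u+\rho)$ with $\rho\gtrsim C^{-8}h^{-18}$ contains \emph{no} cusp fixed by $\sigma_F$. This comes from a horoball argument: $\sigma_F$ maps a horoball at $u$ into a not-much-larger horoball at $u'=\sigma_F(u)$, which excludes fixed cusps nearby. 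One then builds sets $\cT_k$ of ``admissible $k$-convergents'' inductively: if $u=[a_0,\ldots,a_{k-1}]\in\cT_{k-1}$ has height at most $H_{k-1}$, then the $k$th convergent $v=[a_0,\ldots,a_k]$ of a fixed cusp satisfies $|v-u|=1/(q_{k-1}q_k)\leq 1/a_k$, and since $v\notin I_u$ this forces $a_k\leq\rho^{-1}\lesssim C^8H_{k-1}^{18}$, hence $H_k\lesssim C^8H_{k-1}^{19}$. Iterating $N$ times gives the $19^N$; the $9$ comes from the $C^8$ and bookkeeping. Your proposal has no analogue of the excluded-interval estimate, and the numbers $9$ and $19$ are not explained by your branching heuristic.

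For part (2), composing with iterated Dehn twists about a fixed curve moves slopes only \emph{linearly}, so that construction will not produce super-polynomial height growth. The paper's family $\bD_n$ is indeed of the form $M_n.\bD_0$ with $M_n$ a power of a parabolic, but the point is a recursive \emph{conjugation} identity $\psi_m^{-1}F_m\psi_m\approx F_{2m+1}$ (and similarly $F_{m+1}\mapsto F_{2m}$), which yields $\height(s_n)\asymp m\cdot\height(s_m)$ with $m\sim n/2$; iterating gives $\height(s_n)\sim n^{\log_2(n)/2}$. You would need to discover this doubling-type recursion, not merely twist.
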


By testing for obstructions all curves of slopes up to height $H(C)$, we obtain a very special case of the more general but non-quantitative result of Bonnot, Braverman, and Yampolsky \cite{BBY}; see also \cite{BD5}: 

\begin{cor}[Rationality is decidable for NET maps]
\label{cor:decidable}
Suppose $F=F_{\bf D}$. The question, ``Is $F$ equivalent to a rational map?'' is decidable.
\end{cor}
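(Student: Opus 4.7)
The plan is to convert the height bound of Theorem \ref{thm:finite} into a finite search for a Thurston obstruction, and to dispose of the atypical Euclidean case separately by hand.

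First I would handle the Euclidean case. From the presentation diagram $\mathbf{D}$ one can check directly whether $F=F_{\mathbf{D}}$ is Euclidean: this happens exactly when every dashed green arc is trivial, a condition read off from the picture. In that case, as recalled in the introduction, rationality is equivalent to an easy-to-verify eigenvalue condition on the matrix $A=[\lambda_1,\lambda_2]$, which is a finite computation. So I may assume $F$ is a non-Euclidean NET map.

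Next I would reduce rationality to an obstruction search. Combining Thurston's characterization theorem with the fact noted in the introduction that a non-Euclidean NET map has at most one obstruction, and that this obstruction is a single curve whose slope $s\in\overline{\bbQ}$ is a fixed point of $\mu_F$ with multiplier $\delta(s)\geq 1$, we obtain: $F$ is equivalent to a rational map if and only if no such $s$ exists. Via the cusp correspondence $s\leftrightarrow t=-1/s$, any such $s$ corresponds to a cusp fixed by $\sigma_F$, and Theorem \ref{thm:finite}, including its clause covering the multiplier-$1$ case, bounds the height of any such candidate by the integer $H(C)$, where $C=\geomsize(\mathbf{D})$.

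With $H(C)$ in hand, I would enumerate the finitely many reduced fractions $p/q$ with $\max\{|p|,|q|\}\leq H(C)$, and for each compute $c(p/q)$ and $d(p/q)$ from $\mathbf{D}$ via the NET-map pullback algorithm of the present series \cite{cfpp,fpp1,fpp2}, using the explicit factorization $F=G\circ H$ recalled in the introduction. This yields $\mu_F(p/q)$ and $\delta(p/q)$, so I can test both conditions $\mu_F(s)=s$ and $\delta(s)\geq 1$ directly. Then $F$ is rational if and only if no slope in this finite list passes both tests.

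The main obstacle is really inherited from Theorem \ref{thm:finite} and is the one already flagged in the introduction: to run the search as a literal algorithm one needs $H(C)$ to be effectively computable, which reduces to producing a usable upper bound on the Hurwitz class constant $c(\cH)$. For the bare decidability statement of the corollary, however, the existence of a finite $H(C)$ is what is used, together with the routine combinatorial effectivity of the pullback algorithm and the Euclidean/non-Euclidean dichotomy; everything else is bookkeeping.
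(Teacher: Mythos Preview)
Your argument is correct and follows the paper's approach: the paper's justification for the corollary is the single sentence preceding it, namely that one tests all slopes up to height $H(C)$ for being an obstruction. You flesh this out with the Euclidean/non-Euclidean dichotomy and the explicit invocation of the NET map pullback algorithm, and you correctly verify that the hypothesis of Theorem~\ref{thm:finite} is satisfied by the obstruction cusp itself regardless of whether its multiplier equals~$1$.

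Your final paragraph on effectivity is a legitimate observation rather than a gap in your argument relative to the paper's: the paper's one-line proof has exactly the same dependence on the non-explicit constant $c(\cH)$, and the paper flags this limitation in the introduction. So you are not omitting anything the paper supplies.
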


Before discussing the proof, we make some remarks about the last, sharpness assertion (2) in Theorem \ref{thm:finite}. 

\begin{enumerate}
\item There is a competing natural notion of size for a NET map presentation diagram. Following Bartholdi-Nekrashevych \cite{BN}, one can rather naturally associate, using the geometry of the presentation diagram, a wreath recursion on the fundamental group $G$ of $S^2-P$ induced by $F$.  As basepoint for $G$, one may take e.g. $\pi(1/2, 1/2)$.  Since $S^2$ is presented as $\bbR^2/\Gamma$, there is a natural generating set $\{g_1, \ldots, g_4\}$ for $G$ satisfying $g_1g_2g_3g_4=1$: the $g_i$'s run from the basepoint to a corner of the front square of the pillowcase, and then loop around the corner point. By definition, the wreath recursion is a certain homomorphism $\Phi: G \to G^d \rtimes S_d$, $d=\deg(F)$, given by $g \mapsto (g|_1, \ldots, g|_d)\sigma(g)$ where $g|_j \in G$.  The definition of $\Phi$ depends on a choice of a collection of homotopy classes of arcs joining the basepoint to each of its preimages.  The diagram gives a natural way to do this. Define  the \emph{algebraic size} $\mathrm{algsize}(\bD)$ of $\bD$ as $\max\{\norm{g_i|_j} : j \in \{1, \ldots, d\}, i=1,\dotsc,4\}$.  It is not difficult to show that $\algsize(\bD) \leq 42 \cdot \geomsize(\bD)$. However, there is no similar bound in the other direction: if $\lambda_1=(n,0), \lambda_2=(0,n)$, $n$ is very large, and there is just one nontrivial green arc $\alpha_1$ joining $(0,0)$ to e.g. $(1,1)$, then the algebraic size will be small while the geometric size will be large.

\item Two different presentation diagrams may yield isotopic--not just
equivalent--Thurston maps. The natural action of $\mathrm{SL}_2(\bbZ)$
on presentation diagrams--by just applying the matrix to the whole
diagram--corresponds to post-composition with the induced map on the
quotient pillowcase. On the level of isotopy classes, however, this
action is not free: for any typical Thurston map, there are always
nontrivial mapping classes containing representatives which lift under
$F$ to homeomorphisms isotopic to the identity \cite[Theorem
6.3]{kps}.  

\item The previous remark implies that a given NET map $F_1$ is
isotopic relative to $P$ to infinitely many maps $F_{\bD_k}$, so that
$\geomsize(\bD_k) \to \infty$ as $k\to\infty$.
Section~\ref{sec:largeratio} below, however, shows the sharpness
statement (2) of Theorem \ref{thm:finite} by exhibiting an infinite
sequence of degree 2 Thurston maps $F_n=F_{\bD_n}$ with $n\le
\geomsize(\bD_n)\le n+2$ and obstruction slope $s_n$ satisfying
$\height(s_n) \sim n^{\log_2(n)/2}$.  In particular, $\height(s_n) \to
\infty$.  Since the obstructions are unique and their slope heights
tend to infinity, the maps $F_n$ define an infinite set of distinct
homotopy classes.  Nonetheless, we prove in
Theorem~\ref{thm:conjugation2} that these maps lie in only three
equivalence classes.
\end{enumerate}

There are two main ingredients in the proof of Theorem
\ref{thm:finite}.  The first ingredient, Theorem \ref{thm:short_cf},
bounds the length of the continued fraction expansion of a cusp fixed
by $\zs_F$ in terms of the size bound $C$ and an estimate $c(\cH)<1$
for the hyperbolic derivative $||d\sigma_F||$ on a certain universal
cocompact subset of $\bbH$. We are unfortunately unable to bound
$c(\cH)$ explicitly away from $1$ in terms of $C$.

The second ingredient, Theorem \ref{thm:qxi}, exploits a key feature
of NET maps: given a slope $s$, there is an algorithm for calculating
the image (under pullback) slope $s'=\mu_F(s)$, as well as the number
of essential preimage components $c(s)$ of a curve with slope $s$ and
the (necessarily common) degree $d(s)$ by which they map under $F$
\cite[Section 5]{cfpp}.  A slope $s$ fixed by $\zm_F$ is the slope of
an obstruction if and only if the \emph{multiplier},
$\delta(s):=c(s)/d(s)$, satisfies $\delta(s) \geq 1$.  In terms of
cusps, a cusp $t$ corresponds to an obstruction $s=-1/t$ if and only if 
$\sigma_F(B)\subset B$ for each horoball $B$ tangent to $t$.
Similarly, a cusp is fixed if and only if for each horoball $B$
tangent to $t$, the image $\sigma_F(B)$ is contained in another
horoball $B'$ tangent to $t$ which cannot be much larger than $B$. By
exploiting this observation, one can show that each datum $(s, s',
c(s), d(s))$ gives rise to an \emph{excluded interval} in which fixed
cusps cannot lie \cite{P}.  Theorem \ref{thm:qxi} extends the
Half-Space Theorem of \cite[Theorem 6.7]{cfpp} in two ways: it
excludes fixed cusps, not just negative reciprocals of slopes of
obstructions, and it applies in wider generality to certain
exceptional cases not treated there. Related excluded intervals are
used in the computer program {\tt NETmap} in its attempt to answer the
question of whether a given map $F=F_{\bD}$ is equivalent to a
rational map. Under robust observed conditions, the program succeeds:
on every example in the database of 40,000+ examples tabulated at the
website \cite{NET}, the rationality question is answered, modulo our
faith in numerical precision.  The height bound in Theorem
\ref{thm:finite} partially explains this observed
effectiveness. 

Here are the precise statements of these two ingredients.  In each, we
denote by $F=F_{\mathbf{D}}$ where $\geomsize(\mathbf{D})\leq C$ and $\mathbf{D}$ is non-Euclidean.  We
let $[a_0,\dotsc,a_n]$ denote the regular continued fraction with
partial quotients $a_0,\dotsc,a_n$.

\begin{thm}[Fixed cusps have short continued fraction expansions]
\label{thm:short_cf} There exists a positive integer $N=N(C,
\mathcal{H})$ such that if $t=[a_0,\dotsc,a_n]$ is a cusp for which
$\sigma_F(t)=t$, then 
$$n \leq N\lesssim (1-c(\cH))^{-1}\cdot \log C$$
and $0 < c(\cH)<1$. The constant $c(\cH)$ depends only on the modular
group Hurwitz class $\cH$ of $F$ and is an upper bound on the size of
the Teichm\"uller (hyperbolic) derivative of $\sigma_F$ on a universal
cocompact subset of Teichm\"uller space, independent of $F$. The
implicit constant is universal.
\end{thm}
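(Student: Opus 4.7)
The plan is to combine the hyperbolic-geometric interpretation of continued fractions with the strict contraction of $\sigma_F$ on the universal cocompact subset $K$. Regarding $\bbH$ as Teichm\"uller space, decomposed into a thick part $K$ and horoballs around cusps, a cusp $t=p/q$ lies at the tip of a horoball of Euclidean radius $\asymp 1/q^2$, so the hyperbolic distance from any fixed base point $z_0 \in K$ to a horoball of fixed size around $t$ is $\asymp \log\height(t)$. Since the denominators of the convergents of $t$ grow at least like Fibonacci numbers, $n \lesssim \log\height(t)$, so it suffices to bound $\log\height(t)$ in terms of $\log C$ and $c(\cH)$.

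The crucial estimate then comes from applying $\sigma_F$ to the hyperbolic geodesic $\gamma$ from $z_0$ to a fixed-size horoball $B$ around $t$. By Selinger's extension \cite{S} of $\sigma_F$ to the Weil--Petersson completion, $\sigma_F(\gamma)$ is a continuous path from $\sigma_F(z_0)$ to a horoball around $\sigma_F(t)=t$. Every holomorphic self-map of $\bbH$ is $1$-Lipschitz in the hyperbolic metric, and by hypothesis $\sigma_F$ contracts $\gamma \cap K$ by at most $c(\cH)$; since $\gamma$ is a geodesic to a cusp, its total length is dominated by its length in $K$ (horoball transits being bounded), yielding
\[
\mathrm{length}(\sigma_F(\gamma)) \le c(\cH)\cdot \mathrm{length}(\gamma) + O(1).
\]
Conversely, $\sigma_F(\gamma)$ must still reach a definite-size horoball around $t$, so its length is bounded below by $\log\height(t) - O(\log C)$. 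Combining, $\log\height(t) \le c(\cH)\log\height(t) + O(\log C)$, and therefore $n \lesssim (1-c(\cH))^{-1}\log C$, as claimed.

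The main obstacle I anticipate is the precise control of the additive $O(\log C)$ error: one must bound both the displacement $d(z_0, \sigma_F(z_0))$ and the possible shrinkage of the target horoball $\sigma_F(B)$, since uncontrolled shrinkage would let $\sigma_F(\gamma)$ evade the forced length from reaching $t$. The latter amounts to an effective estimate of the multiplier $\delta(-1/t)$ at a fixed cusp. Using the explicit slope-function algorithm of \cite[Section 5]{cfpp} together with the presentation $F=G\circ H$ with $A=[\lambda_1,\lambda_2]$ of operator norm at most $C$, one obtains the requisite effective control, and the $\log C$ dependence traces through these explicit diagram-level computations to yield the asserted linear bound.
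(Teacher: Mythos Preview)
Your argument contains a genuine gap at the step ``since $\gamma$ is a geodesic to a cusp, its total length is dominated by its length in $K$ (horoball transits being bounded).'' This is false. If $t=[a_0,\dotsc,a_n]$ has a large partial quotient $a_k$, the geodesic $\gamma$ enters the horoball around the convergent $p_{k-1}/q_{k-1}$ to depth $\asymp\log a_k$ before emerging; the sum of these horoball excursions is $\asymp\sum_k\log a_k\asymp\log\height(t)$, which can dwarf the thick-part length. In fact the portion of $\gamma$ lying in the universal cocompact set is only $\asymp n$, not $\asymp\ell(\gamma)$. Hence your multiplicative estimate $\ell(\sigma_F(\gamma))\le c(\cH)\ell(\gamma)+O(1)$ does not follow; on the thin part one only has the Schwarz--Pick bound $\|d\sigma_F\|\le 1$, with no definite contraction.

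There is a decisive check: your argument, if it worked, would yield $\log\height(t)\lesssim(1-c(\cH))^{-1}\log C$, i.e.\ a height bound polynomial in $C$ with exponent depending only on $\cH$. But the examples in Section~\ref{sec:largeratio} furnish obstructed NET maps $F_n$ all lying in a \emph{fixed} Hurwitz class (Theorem~\ref{thm:conjugation2}) with $\geomsize(\bD_n)\asymp n$ and $\log\height(s_n)\asymp(\log n)^2$. This directly contradicts your conclusion, so the argument cannot be repaired without changing its structure.

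The paper's proof avoids this by extracting only an \emph{additive} saving tied to $n$ rather than to $\ell(\gamma)$. Concretely, the geodesic $\gamma$ crosses opposite sides of $n-1$ Farey quadrilaterals, and each such crossing forces a subsegment of length at least a universal constant $a$ to lie inside the cocompact set $\widetilde{X}$; this yields $\ell(\sigma_F(\gamma))\le\ell(\gamma)-(n-1)(1-c)a$. Combined with the horoball-shift bound $d(\partial B,\partial B')=\log\deg(F)$ (from Gr\"otzsch) and the displacement bound $d(i,\sigma_F(i))\lesssim\log C$ (Theorem~\ref{thm:bounded_displacement}), the triangle inequality gives $n\lesssim(1-c)^{-1}\log C$ directly, without ever bounding $\height(t)$. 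Your anticipated ``main obstacle'' about controlling $\delta(-1/t)$ is handled in one line this way (the horoball can grow by at most $\deg(F)$), and no effective multiplier estimate is needed.
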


\begin{thm}[Quantitative excuded intervals] \label{thm:qxi} There
exist positive numbers $R=R(C)$ and $\rho=\rho(C,h)$ for every
positive integer $h$ with the following property.  Let $t$ be a cusp
with $\height(t)\le h$. Assume that if $\zs_F(t)=t$ then the
multiplier $\delta(-1/t)\neq 1$.  Then
\begin{enumerate}
\item if $t=1/0$, the deleted-at-infinity interval $I_t:=(-\infty, -R)
\cup (R,+\infty)$ contains no cusps fixed by $\zs_F$;
\item if $t \neq 1/0$, the deleted interval $I_t:=(t-\rho, t) \cup (t,
t+\rho)$ contains no cusps fixed by $\zs_F$.
\end{enumerate}
Furthermore $R\lesssim C^8$ and $\rho\gtrsim C^{-8}h^{-18}$. 
\end{thm}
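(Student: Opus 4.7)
The strategy is to give, for any cusp $t$, two complementary descriptions of how $\zs_F$ acts on horoballs at $t$: by Selinger's continuity at cusps \cite{S}, sufficiently small horoballs at $t$ are mapped into horoballs at $t':=\zs_F(t)=-1/\zm_F(-1/t)$ with quantitative control on image diameter; and if $t$ is itself a fixed cusp, the ratio of successive horoball diameters tends to the multiplier $\zd(-1/t)$, which by the hypothesis is not $1$. For any candidate fixed cusp $t^*$ lying too close to $t$, I would show that these two descriptions force the image under $\zs_F$ of a suitable horoball at $t^*$ to lie simultaneously in horoballs at two distinct cusps, a configuration that contradicts the Farey-type separation $|t^*-t'|\ge 1/(\height(t^*)\height(t'))$.

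The first ingredient is the pullback computation. For $s=-1/t$ of height at most $h$, apply the algorithm of \cite[Section 5]{cfpp} to extract $(s,s',c(s),d(s))$ from the diagram $\bD$. Since $\geomsize(\bD)\le C$, the matrix $A$ has entries of absolute value at most $C$, and direct estimates on the algorithm yield polynomial bounds $\height(s')\lesssim C^a h^b$ and $d(s)\lesssim C^2$; the simpler infinity case begins from $s=0$ and produces $\height(\zm_F(0))\lesssim C^2$ directly from the matrix entries.

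The second ingredient is an effective form of Selinger continuity: I would produce polynomials $d_0(C,h)$ and $K(C,h)$ such that the horoball $B_\epsilon$ at $t$ of Euclidean diameter $\epsilon<d_0$ satisfies $\zs_F(B_\epsilon)\subseteq$ (horoball at $t'$ of diameter $\le K\epsilon$). Given a candidate fixed cusp $t^*$ with $|t-t^*|<\rho$, set $h^*:=\height(t^*)$; the Farey inequality $|t-t^*|\ge 1/(hh^*)$ then forces $h^*\gtrsim 1/(h\rho)$. Choosing $B^*$ to be the horoball at $t^*$ of diameter $1/(h^*)^2$ places $B^*$ inside an $\epsilon$-horoball at $t$ with $\epsilon \asymp 1/(hh^*)$, so $\zs_F(B^*)$ lies in a horoball at $t'$ of diameter $\lesssim K/(hh^*)$. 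But since $t^*$ is fixed, an appropriate multiplier estimate (using the hypothesis directly when $t^*=t$, and an extension of the Half-Space Theorem \cite[Theorem 6.7]{cfpp} to the exceptional multiplier regimes otherwise) shows that $\zs_F(B^*)$ is also contained in a horoball at $t^*$. When $t^*\ne t'$, the two containments at distinct cusps force $|t^*-t'|\lesssim K/(hh^*)$, contradicting $|t^*-t'|\ge 1/(\height(t')h^*)$ unless $\rho$ is at least the claimed quantitative lower bound. The infinity case (1) follows by the same argument conjugated to the chart at $\infty$.

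The principal obstacle is pinning down the explicit polynomial exponents $R\lesssim C^8$ and $\rho\gtrsim C^{-8}h^{-18}$. Each of the three ingredients — the height bound on $t'$, the effective Selinger constant $K(C,h)$, and the translation from horoball containment to cusp separation — contributes a polynomial factor, and these compound multiplicatively. The most delicate steps are extracting uniform polynomial dependence from the combinatorics of \cite[Section 5]{cfpp}, and handling the exceptional multiplier regimes not covered by \cite[Theorem 6.7]{cfpp}; it is the latter that forces the hypothesis $\zd(-1/t)\ne 1$ whenever $\zs_F(t)=t$, since without it the horoball dynamics at $t$ is neutral and no exclusion around $t$ can be obtained.
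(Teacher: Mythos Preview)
Your horoball/Selinger picture is a reasonable heuristic for the \emph{nonexceptional} case $t'\notin\{t,\odot\}$, and it is morally the same as what the paper does there: the paper packages this geometry into the single linear inequality $\deg(F)\,|px+q|<|p'x+q'|$ (their Theorem~\ref{thm:fixedpt}), which immediately yields an excluded interval with the stated $C^2$ and $C^{-2}h^{-2}$ constants. Your version would also work here, though the intermediate claim that $B^*$ sits inside a horoball at $t$ of Euclidean diameter $\asymp 1/(hh^*)$ is not right: the smallest horoball at $t$ containing a horoball at $t^*$ of diameter $1/(h^*)^2$ has diameter $1/(h^*)^2+|t-t^*|^2(h^*)^2$, and Farey forces the second term to be at least $1/h^2$ regardless of $h^*$. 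This does not ruin the nonexceptional argument, but it does mean the estimates you wrote down need reworking.

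The genuine gap is the \emph{exceptional} case $t'\in\{t,\odot\}$. When $t'=\odot$ there is no target cusp at all, so your containment-at-two-cusps contradiction never gets off the ground. When $t'=t$, both containments are at horoballs tangent to $t$ and to $t^*$; running your inequality gives $|t-t^*|^2\lesssim \deg(F)\cdot(\text{diam }B^*)\cdot\epsilon$, and since the relevant expansion factor $\deg(F)/\delta$ is always $\ge 1$, no contradiction emerges. You cannot extract exclusion from the horoball dynamics at $t$ alone, and ``an extension of the Half-Space Theorem'' is exactly the thing that needs to be supplied rather than invoked.

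The paper's mechanism for this case is quite different and is the heart of the proof. One first treats $t=1/0$. Using Proposition~\ref{prop:simple_input_nontrivial_output} one finds an auxiliary cusp $-Q/P$ of height $\le\deg(F)$ whose $\sigma_F$-image is a finite cusp. The key identity is the Dehn-twist functional equation at $t=1/0$: if the slope $0$ has $c$ essential preimages each of degree $d$, then $\sigma_F(z+2d)=\sigma_F(z)+2c$. The hypothesis $\delta\neq1$ is exactly $c\neq d$, so the translates $-Q/P-2nd$ have images $-Q'/P'-2nc$ which are distinct from them for all but one $n$. Each such translate is therefore nonexceptional, and for $|n|$ larger than an explicit $N\sim C^6$ its excluded interval (from the easy case) has radius exceeding $d$; the union of these intervals covers $(-\infty,-R)\cup(R,\infty)$ with $R\sim C^8$. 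The case $t\neq 1/0$ is then obtained by conjugating $t$ to $1/0$ with an $\mathrm{SL}_2(\bbZ)$ element built from consecutive convergents of $-q/p$; this inflates the size bound to $Ch^2$, and transporting $R\sim(Ch^2)^8$ back through the conjugacy yields $\rho\sim C^{-8}h^{-18}$. None of this apparatus is visible in your outline, and it is precisely where the exponents $8$ and $18$ come from.
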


The assumption that the multiplier of a fixed cusp $t$ satisfies
$\delta(-1/t)\neq 1$ is necessary.  Here is an example.
Figure~\ref{fig:omit} shows a presentation diagram for a NET map $F$.
The map $F$ has an obstruction $\gamma$ with slope $\infty $ and
multiplier 1. The curve of slope $0$ is fixed.  Let $T$ be a Dehn
twist about $\gamma$. It is easy to check directly that $T\circ F$ is
isotopic to $F \circ T$ relative to $P$. Since we may choose $T$ so
that $\sigma_T(z)=\frac{z}{2z+1}$, it follows that $\sigma_F(1/n)=1/n$
for each even integer $n$.  Thus every deleted interval about the
fixed cusp $0$ contains infinitely many fixed cusps.\\

\noindent {\bf Organization.} Section \ref{thm:pf_finite} derives the height bound of Theorem \ref{thm:finite} from Theorems \ref{thm:short_cf} and \ref{thm:qxi}.  Section \ref{secn:surjective} contains a result of independent interest, Theorem \ref{thm:trivial_or_surjective}, which asserts that for Thurston maps with $\#P_F=4$, $\mu_F$ is surjective if and only if it is nontrivial. Section \ref{secn:size} contains several results showing how the size bound $C$ controls the geometry of $F$, of $\mu_F$,  and of $\sigma_F$.  Sections \ref{secn:pf_thm_short_cf} and \ref{secn:pf_thm_qxi} give, respectively, the proofs of Theorems \ref{thm:short_cf} and \ref{thm:qxi}. Section \ref{sec:largeratio} concludes the proof of Theorem \ref{thm:finite} by establishing the sharpness assertion: it gives a sequence of obstructed examples for which the heights of the obstructions grow more than polynomially in the geometric sizes.  \\

\noindent{\bf Acknowledgements} K. Pilgrim was supported by Simons grant 245269. The authors also acknowledge support from the AIM ``SQuaRE'' program.

  \begin{figure}
\centerline{\includegraphics{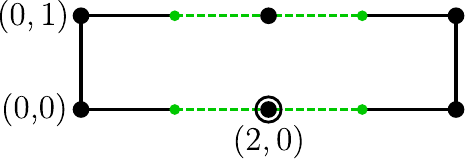}}
\caption{A presentation diagram of a NET map which shows that the
assumption that $\zd(-1/t)\ne 1$ is necessary }
\label{fig:omit}
  \end{figure}

 \section{Notation}
 \label{secn:notation}
 
Before proceeding to the proofs, we establish  in this section some notation, conventions, and terminology.  

All surface branched coverings and homeomorphisms considered here are orientation-preserving. 

{\bf The square pillowcase group and its subgroups.}  Our two-sphere $S^2$ and set $P$ of four points are as defined in the
introduction.  Its affine automorphism group
$\text{PSL}(2,\bbZ)\ltimes (\bbZ/2\bbZ)^2$ is then naturally
identified with the (impure) mapping class group
$\text{Mod}(S^2,P)$. We refer to the elements of $(\bbZ/2\bbZ)^2$ as
translations.  The pure mapping class group $\text{PMod}(S^2,P)$ is identified with  $\overline{\Gamma}(2)=\{A\in \mathrm{SL}_2(\bbZ): A
\equiv I \bmod 2\}/\{\pm I\}<\text{PSL}_2(\bbZ)$.   

{\bf Slopes and cusps.}  Slopes are denoted using the letter $s$; cusps denoted using the
letter $t$. Both slopes and cusps are extended rational numbers,
elements of $\overline{\bbQ}=\bbQ\cup \{\infty \}$.  

{\bf Thurston maps.}  All Thurston maps considered here will be defined as
maps $F: (S^2,P) \to (S^2,P)$ on the standard square pillowcase and by definition are orientation-preserving. The degree of $F$ is denoted $\deg(F)$. Under pullback induced by $F$, images of slopes
and cusps under $\mu_F$ and $\sigma_F$ are generally denoted with
primes.

{\bf The upper half-plane $\bbH$.}  To simplify notation for the present discussion, we denote by $G=\text{PSL}(2,\bbZ)\ltimes (\bbZ/2\bbZ)^2 \cong \text{Mod}(S^2, P)$.  


Just as $\zs_F:\bbH\to \bbH$ is gotten
from $F$ by pulling back complex structures, we obtain a pullback map
$\zs_h:\bbH\to \bbH$ for every $h\in G$.  Note that translations act trivially.   The action is not quite the obvious one, since it is induced via pullback:  as in \cite[Section
6]{cfpp} and \cite[Proposition 4.1]{fpp2}, if $h$ has matrix
$\left[\begin{smallmatrix}a & b \\ c & d \end{smallmatrix}\right]\in
\mathrm{SL}_2(\bbZ)$, then $\zs_h$ has matrix
$\left[\begin{smallmatrix}d & b \\ c & a
\end{smallmatrix}\right]$.  In this way we obtain a right action of $G$ on
$\bbH\cup \overline{\bbQ}$ so that $h.z=\zs_h(z)$ for $h\in G$ and
$z\in \bbH\cup \overline{\bbQ}$.

We denote by $d_{\bbH}$ the hyperbolic distance on $\bbH$ induced by
the line element $ds=\frac{|dz|}{\Im(z)}$.  

{\bf Liftables.}  Given a Thurston map $F:
(S^2,P) \to (S^2,P)$, there is a distinguished subgroup $G_F$ of $\text{PMod}(S^2,P)$--the ``pure liftable'' elements--comprised of those classes
represented by homeomorphisms $h$ for which $h\circ F$ is isotopic
relative to $P$ to $F \circ \tilde{h}$ for some $\tilde{h}\in \mathrm{PMod}(S^2,P)$. The
assignment $h \mapsto \tilde{h}$ defines the \emph{virtual
endomorphism} $\phi_F: G_F \to \text{PMod}(S^2,P)$.  The pullback map $\sigma_F$ and
virtual endomorphism $\phi_F$ are related by the functional equation
$\sigma_F(h.\tau)=\phi_F(h).\sigma_F(\tau)$. See \cite{kps}.

\section{Deriving the height bound}
\label{thm:pf_finite}\nosubsections

In this section, we deduce statement 1 of Theorem \ref{thm:finite}
from Theorems \ref{thm:short_cf} and \ref{thm:qxi}.  Statement 2 is
proved in Section~\ref{sec:largeratio}.

\begin{proof} We will use the following lemma, which contains three
standard facts about continued fractions.

\begin{lemma}\label{lemma:cfracs} Let $[a_0,\dotsc,a_n]$ be a
regular continued fraction.  So $a_0,\dotsc,a_n$ are integers, and
$a_k>0$ if $k\ge 1$.  Let $\frac{p_k}{q_k}=[a_0,\dotsc,a_k]$
for $0\le k\le n$ be its convergents.  Set $p_{-1}=1$, $q_{-1}=0$,
$p_{-2}=0$ and $q_{-2}=1$. Then the following statements hold.
\begin{enumerate}
  \item $p_k=a_kp_{k-1}+p_{k-2}$ and $q_k=a_kq_{k-1}+q_{k-2}$ for
$0\le k\le n$
  \item
$\left|\frac{p_{k-1}}{q_{k-1}}-\frac{p_k}{q_k}\right|=\frac{1}{q_{k-1}q_k}$
for $k>0$
  \item $\frac{p_0}{q_0}<\frac{p_2}{q_2}<\frac{p_4}{q_4}<\cdots 
<\frac{p_5}{q_5}<\frac{p_3}{q_3}<\frac{p_1}{q_1}$
\end{enumerate}
\end{lemma}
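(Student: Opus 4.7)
The plan is to reduce all three statements to a single matrix identity. I would set $M_k = \left[\begin{smallmatrix} a_k & 1 \\ 1 & 0 \end{smallmatrix}\right]$ and prove by induction on $k$ that
\[
M_0 M_1 \cdots M_k = \begin{pmatrix} p_k & p_{k-1} \\ q_k & q_{k-1} \end{pmatrix}.
\]
The base case $k=0$ is immediate from the initial values $p_{-1}=1$, $q_{-1}=0$, $p_0=a_0$, $q_0=1$. For the inductive step, right-multiplying by $M_{k+1}$ and reading off the two columns yields the fundamental recurrence in (1) from the first column, while the second column automatically propagates to the correct values at step $k+1$ (it becomes the first column of the previous product). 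In parallel, I would verify that $p_k/q_k = [a_0,\dotsc,a_k]$ by noting that the M\"obius transformation associated to $M_0 \cdots M_k$ sends $\infty$ to $p_k/q_k$ and is the same transformation that sends $a_k = M_k(\infty)$ to the iterated fraction $a_0 + 1/(a_1 + 1/(\cdots + 1/a_k))$.

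Part (2) then falls out immediately by taking determinants: $\det M_j = -1$ gives $p_k q_{k-1} - p_{k-1} q_k = (-1)^{k+1}$, so dividing by $q_{k-1}q_k$ delivers the claim. Positivity of $q_k$ for $k \ge 0$ follows at once from the recurrence in (1) together with the hypothesis $a_k > 0$ for $k \ge 1$, so there are no absolute-value subtleties.

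For part (3), I would combine (1) with the determinant identity just established to compute
\[
\frac{p_k}{q_k} - \frac{p_{k-2}}{q_{k-2}}
 = \frac{a_k(p_{k-1}q_{k-2} - p_{k-2}q_{k-1})}{q_k q_{k-2}}
 = \frac{(-1)^k a_k}{q_k q_{k-2}},
\]
whose sign is that of $(-1)^k$ since $a_k>0$. Hence the even-indexed convergents form a strictly increasing sequence and the odd-indexed convergents a strictly decreasing one. The signed form of (2), namely $p_{k-1}/q_{k-1} - p_k/q_k = (-1)^k/(q_{k-1}q_k)$, further shows that for every even $k$ the odd convergent $p_{k-1}/q_{k-1}$ strictly exceeds the adjacent even convergent $p_k/q_k$. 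Transitivity with the two monotonicities then gives the full zigzag chain in (3).

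Since these are entirely classical facts about regular continued fractions, I do not anticipate any genuine obstacle; the only work is to keep the matrix bookkeeping clean, after which each statement reads off mechanically.
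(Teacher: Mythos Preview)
Your argument is correct. The paper itself does not prove this lemma at all; it simply states the three assertions as ``standard facts about continued fractions'' and moves on. So there is no route in the paper to compare with, and your matrix-product approach is a perfectly clean way to supply the missing details: once the identity $M_0\cdots M_k=\left(\begin{smallmatrix}p_k&p_{k-1}\\q_k&q_{k-1}\end{smallmatrix}\right)$ is established, (1) is the first column of the inductive step, (2) is the determinant, and (3) follows from the two-step difference computation together with the signed version of (2), exactly as you wrote. The only thing I would tighten is the sentence verifying $p_k/q_k=[a_0,\dotsc,a_k]$: it is enough to say that the M\"obius map of $M_0\cdots M_k$ sends $\infty$ to both $p_k/q_k$ (by the matrix form) and to $a_0+\cfrac{1}{a_1+\cdots+\cfrac{1}{a_k}}$ (by composing the individual $z\mapsto a_j+1/z$), without the slightly awkward ``sends $a_k$ to\ldots'' phrasing.
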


\noindent{\bf Continued fractions.}. We will work with finite regular continued fractions throughout this
proof.  When we write $[a_0,\dotsc,a_n]$, it is assumed that this
is a regular continued fraction with integers $a_0,\dotsc,a_n$ such
that $a_i>0$ for $i\in\{1,\ldots,n\}$.

Every rational number $t$ can be represented in exactly two ways as a
finite regular continued fraction:
  \begin{equation*}
t=[a_0,\dotsc,a_{n-1},a_n,1]=[a_0,\dotsc,a_{n-1},a_n+1].
  \end{equation*}
We will work with both of these representations of $t$.  So despite
writing equations as in the last display (an abuse of notation), we
must distinguish between finite regular continued fractions and the
rational numbers which they represent.  The last display is to be
interpreted as saying that both continued fractions represent $t$.  
Given one of the two continued fraction expansions of a rational number $t$, the sequence of convergents $p_n/q_n$ associated to this expansion is well-defined; it is not, however, well-defined given merely $t$ itself.   

Finally, we will find
it convenient to extend the definition of convergent as follows. Given a rational number $t$ and an expansion  $t=[a_0,\dotsc,a_n]$ and an integer $k>n$, we define the
$k$-convergent of this expansion to be the rational number $t$.

We begin the proof of Theorem~\ref{thm:finite} proper.  Suppose that
$F=F_{\mathbf{D}}$ and $\geomsize(\mathbf{D})\leq C$.  Let $N$ be the
bound on the length of the continued fraction expansion of a cusp $t$
satisfying $\sigma_F(t)=t$ given by Theorem \ref{thm:short_cf}. We
will show the existence of a finite set of cusps $\cT_N$ depending on
only $C$ and $N$ such that the cusp $t$ in the statement of
Theorem~\ref{thm:finite} must lie in $\cT_N$.  We will construct
$\cT_N$ as an ascending union of finite sets $\cT_{-1} \subseteq \cT_0
\subseteq \cT_1 \subseteq \cdots\subseteq \cT_N$. The sets $\cT_k$
will be defined inductively as a collection of cusps whose continued
fraction expansions satisfy a certain property.

To initialize, we set $\cT_{-1}:=\{\frac{1}{0}\}$. 

The construction of $\cT_0$ is slightly different from the
construction of $\cT_k$ in general, so we perform it separately. If
$\frac{1}{0}$ is the negative reciprocal of the slope of an
obstruction with multiplier 1, then there is nothing to prove.
Otherwise let $I_{\frac{1}{0}}$ be the excluded deleted-at-infinity
interval provided by Theorem \ref{thm:qxi}, and let $a$ be the largest
negative integer in $I_{\frac{1}{0}}$.  We let
  \begin{equation*}
\cT_0=\cT_{-1} \cup 
\{a_0\in \bbZ: a_0\notin I_{\frac{1}{0}}\}\cup \{a\}.
  \end{equation*}
The point of choosing $a$ in this way is that if $b_0$ is a negative
integer in $I_\frac{1}{0}$ other than $a$, then $b=[b_0,\dotsc,b_m]\in
I_\frac{1}{0}$ for every choice of positive integers $m$ and
$b_1,\dotsc,b_m$ because $b\le a$.  It is clear that if $b_0$ is a
positive integer in $I_\frac{1}{0}$, then $b=[b_0,\dotsc,b_m]\in
I_\frac{1}{0}$ for every choice of positive integers $m$ and
$b_1,\dotsc,b_m$ because $b>b_0$.  So if $[a_0,\dotsc,a_n]$ represents
a cusp fixed by $\zs_F$, then $[a_0]\in \cT_0$.

We will want to control heights of elements of $\cT_k$ too. We will also do this inductively. For the base case, note from Theorem \ref{thm:qxi} that there exists a universal constant $A_0$ for which $\max\{\height(t): t \in \cT_0\}\leq H_0:= A_0\cdot C^8$. 

Here is the inductive step.  Suppose $k \geq 1$.  Our inductive
hypothesis is (i) that the set $\cT_{k-1}$ contains the
$(k-1)$-convergent of every finite regular continued fraction which
represents a cusp fixed by $\zs_F$ and (ii) we have constructed an
upper bound $H_{k-1}<\infty$ on the heights of the elements of
$\cT_{k-1}$.  Note that the height bound implies $\cT_{k-1}$ is
finite.  If $\cT_{k-1}$ contains the negative reciprocal of the slope
of an obstruction with multiplier 1, then the construction stops here.
We set $\cT_N=\cT_{k-1}$ and proceed to the estimation of $H_N$ at
the end of this proof.  Otherwise, we may assume that every cusp $u\in
\cT_{k-1}$ has a deleted interval $I_u$ as in Theorem~\ref{thm:qxi}.

Here is the definition of $\cT_k$: \[ \cT_k:=\cT_{k-1}\cup
\{t=[a_0,\dotsc,a_k] :u=[a_0,\dotsc,a_{k-1}] \in\cT_{k-1}, t \not\in
I_u\}.\] In other words: for each element $u=[a_0,\dotsc,a_{k-1}]$ of
$\cT_{k-1}$, consider those cusps $t$ represented by a continued
fraction which extends that of $u$ by one more partial quotient \[
t=[a_0,\dotsc, a_{k-1}, a_k].\] Then $t$ lies in $\cT_k$ if and only
if $t \not\in I_u$.

We now check the inductive step. Suppose $t=[a_0,\dotsc,a_n]$ is a
cusp fixed by $\zs_F$. By the inductive hypothesis, $\cT_{k-1}$
contains the $(k-1)$-convergent of $[a_0,\dotsc,a_n]$.  We first claim
that $\cT_k$ contains the $k$-convergent of $[a_0,\dotsc,a_n]$.  To
verify this, there are a few cases to consider, depending on how $n$
compares with $k$:
\begin{itemize} 
  \item If $n \leq k-1$, then the $k-1$ and $k$-convergent of
$[a_0,\dotsc,a_n]$ is $t$, and so $t \in \cT_{k-1}\subset \cT_k$.
  \item If $n=k$, then the $k$-convergent of $[a_0,\dotsc,a_n]$ is
$t$.  Let $u=[a_0,\dotsc,a_{k-1}]$.  The fixed cusp $t$ cannot lie in
the excluded interval $I_u$, so $t\in \cT_k$ by the definition of
$\cT_k$.
  \item Now suppose $n>k$.  Let $u=[a_0,\dotsc,a_{k-1}]$ and
$v=[a_0,\dotsc,a_k]$.  We proceed by contradiction: suppose $v\not\in
\cT_k$.  Recall the inductive hypothesis asserts $u\in \cT_{k-1}$. The
definition of $\cT_k$ then implies that $v\in I_u$. The rationals $u$,
$v$ are consecutive convergents of $[a_0,\dotsc,a_n]$.  So line 3 of
Lemma~\ref{lemma:cfracs} implies that $t$ is strictly between $u$ and
$v$. Since $v\in I_u$, a deleted excluded interval about $u$, we
conclude $t\in I_u$.  But $t$ is a fixed cusp, so this is impossible
by Theorem \ref{thm:qxi}.  We conclude $v \in \cT_k$.
\end{itemize}

We next estimate the heights of the elements of $\cT_k$ in terms of
$H_{k-1}$.  Let $t=[a_0,\dotsc,a_k]\in \cT_k$.  Let $a=a_k$, let
$\frac{p}{q}=[a_0,\dotsc,a_{k-1}]$ and let
$\frac{p'}{q'}=[a_0,\dotsc,a_{k-2}]$ if $k\ge 2$ and let
$\frac{p'}{q'}=\frac{1}{0}$ if $k=1$.  Of course, $p$, $q$, $p'$, $q'$
are integers with $q>0$, $q'\ge 0$ and $\gcd(p,q)=\gcd(p',q')=1$.
Then $\frac{p}{q},\frac{p'}{q'}\in \cT_{k-1}$.  Line 1 of
Lemma~\ref{lemma:cfracs} implies that $t=\frac{ap+p'}{aq+q'}$.  Line 2
of Lemma~\ref{lemma:cfracs} applied to the convergents $\frac{p}{q}$
and $t$ implies that
  \begin{equation*}
\left|\frac{p}{q}-t\right|=\frac{1}{q(aq+q')}\le \frac{1}{a}.
  \end{equation*}
Theorem \ref{thm:qxi} provides the existence of a universal constant
$B_1>1$ such that the radii of the excluded intervals about finite
elements of $\cT_{k-1}$ are at least $\zr:=
B^{-1}_1C^{-8}H_{k-1}^{-18}$.  It follows that if $a^{-1}<\zr$, then
$t$ is in the excluded interval about $\frac{p}{q}$.  Since this is
not the case, it follows that $a^{-1}\ge \zr$, that is, $a\le
\zr^{-1}$.  Thus $\height(t)\le (\zr^{-1}+1)H_{k-1}\leq
A_1C^8H_{k-1}^{19}$, where $A_1$ is a universal constant which may be
taken to be $B_1+1$. We conclude that we may set \[ H_k :=
A_1C^8H_{k-1}^{19}.\] We obtain by induction \[ H_k =
(A_1C^8)^{1+19+19^2+\ldots + 19^{k-1}}\cdot (A_0C^8)^{19^k}\] where
the constants $A_0, A_1$ are universal.

Proceeding in this way, we eventually construct $\cT_N$.
Theorem~\ref{thm:short_cf} implies that if $t=[a_0,\dotsc,a_n]$ and
$\zs_F(t)=t$, then $n\le N$.  Thus $t\in \cT_N$, and we have the
desired bound on $\height(t)$.  In particular, the height of a cusp
$t$ fixed by $\zs_F$ is at most 
  \begin{equation*}
H_N<(A_0A_1)^{19^N}(C^8)^{1+\cdots
+19^N}<(A_0A_1)^{19^N}(C^8)^{19^{N+1}/18}<(A_0A_1C^9)^{19^N}
=(AC^9)^{19^N},
  \end{equation*}
where $A$ is an absolute constant and $N=N(C,\cH)$ is the constant
from Theorem \ref{thm:short_cf}. 

This proves Theorem~\ref{thm:finite}.
\end{proof}

\section{The slope function $\mu_F$ is surjective or trivial}
\label{secn:surjective}
\nosubsections

This brief section proves a foundational result using part of the
correspondence on moduli space associated to $F$.  We remark that
$\mu_F$ can indeed be trivial; see \cite{BEKP}.

\begin{thm}[The slope function $\mu_F$ is surjective or trivial]
\label{thm:trivial_or_surjective} Suppose $F: (S^2,P) \to (S^2,P)$ is
an arbitrary Thurston map on the standard square pillowcase with
$P=P_F$. Then $\mu_F$ is surjective on $\overline{\bbQ}$ if and only if it is not
identically the constant function $\nonslope$.  Equivalently, the
extension of $\sigma_F$ to $\bbH \cup \overline{\bbQ}$ is surjective
on cusps if and only if $\sigma_F$ is not a constant function.
\end{thm}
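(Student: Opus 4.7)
The direction `surjective $\Rightarrow$ not identically $\nonslope$' is immediate since $\overline{\bbQ}\not\ni\nonslope$. For the converse, via the identification $s\leftrightarrow -1/s$ the claim becomes: if $\sigma_F$ is not constant on $\bbH$, then its Selinger extension maps $\overline{\bbQ}$ onto $\overline{\bbQ}$. The strategy is to descend $\sigma_F$ to the moduli space correspondence and invoke the principle that non-constant holomorphic maps between compact Riemann surfaces are surjective. Using the $\phi_F$-equivariance $\sigma_F(h\tau)=\phi_F(h)\sigma_F(\tau)$, $\sigma_F$ descends to a holomorphic map $\pi_F\colon W_F\to M$, where $W_F=\bbH/\Gamma_F$ and $M=\bbH/\Gamma=M_{0,4}$, and factors as $\pi_F=\pi_{V_F}\circ\sigma_F^*$ through the intermediate cover $V_F=\bbH/\phi_F(\Gamma_F)$, with $\pi_{V_F}\colon V_F\to M$ the covering associated to $\phi_F(\Gamma_F)\le\Gamma$. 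The extension $\overline{\pi_F}\colon\overline{W_F}\to\overline{M}\cong\mathbb{P}^1$ is a finite-degree algebraic map of compact Riemann surfaces.

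I first prove the equivalence $\sigma_F$ constant $\iff \mu_F\equiv\nonslope$. The direction `$\Rightarrow$' is continuity of Selinger's extension. For `$\Leftarrow$', if $\mu_F\equiv\nonslope$ then no cusp of $\bbH$ is sent to a cusp under $\sigma_F$, so no cusp of $\overline{W_F}$ maps under $\overline{\pi_F}$ into $\{0,1,\infty\}$; since $\pi_F(W_F)\subseteq M$, the extended map avoids these three points entirely. A non-constant holomorphic map between compact Riemann surfaces is surjective, so $\overline{\pi_F}$ must be constant, and then $\sigma_F(\bbH)$, being connected and contained in the discrete $\Gamma$-orbit lying above a point of $M$, is a single point.

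Now assume $\sigma_F$ is non-constant. A degree count in $\pi_F=\pi_{V_F}\circ\sigma_F^*$ over a generic $q\in M$---the $d:=\deg(\pi_F)$ points of $\pi_F^{-1}(q)$ are distributed among the sheets of $\pi_{V_F}^{-1}(q)$ met by $\sigma_F^*$, and the algebraicity of the Hurwitz-space correspondence forces $\sigma_F^*$ to meet every sheet---shows $[\Gamma:\phi_F(\Gamma_F)]\le d<\infty$, so $\overline{V_F}$ is compact. Then $\overline{\sigma_F^*}\colon\overline{W_F}\to\overline{V_F}$ is a non-constant holomorphic map of compact Riemann surfaces, hence surjective; and since $\sigma_F^*$ sends $W_F$ into $V_F$ (interior to interior), preimages of cusps of $V_F$ under $\overline{\sigma_F^*}$ must be cusps of $W_F$, so surjectivity holds on cusps. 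For any $t'\in\overline{\bbQ}$, its class $[t']_{\phi_F(\Gamma_F)}$ is a cusp of $V_F$, hit by some cusp $[t]_{\Gamma_F}$ of $W_F$; lifting, there exists $t\in\overline{\bbQ}$ with $\sigma_F(t)=\phi_F(h)\cdot t'$ for some $h\in\Gamma_F$, and $\phi_F$-equivariance gives $\sigma_F(h^{-1}\cdot t)=t'$. The main obstacle I anticipate is making the finite-index claim rigorous: justifying that $\sigma_F^*$ meets every sheet of $\pi_{V_F}$ generically rests on the algebraic structure of the Hurwitz-space correspondence supplying $\pi_F$.
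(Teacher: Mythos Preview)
Your approach is essentially the same as the paper's: descend $\sigma_F$ through the diagram $\bbH \to \cW \to \cZ \to \cM$, use surjectivity on ends of the middle map, and lift back via the $\phi_F$-equivariance. The paper imports the needed finiteness from \cite{kps}: it cites Proposition~3.1 there for $[G:G_F]<\infty$ (so that $\cW$ is of finite type), and then asserts that $\overline{\sigma}_F:\cW\to\cZ$ is surjective on ends. You instead try to prove these finiteness facts internally, and this is where your argument is incomplete.

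There are two gaps. First, you use without comment that $\overline{W_F}$ is a compact Riemann surface, i.e.\ that $[\Gamma:\Gamma_F]<\infty$; this is standard but should be cited. Second, your acknowledged gap---that $\sigma_F^*$ meets every sheet of $\pi_{V_F}$, hence $[\Gamma:\phi_F(\Gamma_F)]<\infty$---does not follow from ``algebraicity of the Hurwitz correspondence'' as you suggest, but it can be closed cleanly as follows. Extend $\pi_F$ to $\overline{\pi_F}:\overline{W_F}\to\mathbb{P}^1$ and set $W':=\overline{\pi_F}^{-1}(\cM)$; then $\overline{\pi_F}|_{W'}:W'\to\cM$ is proper, hence a finite branched cover, so its $\pi_1$-image has index at most $\deg(\overline{\pi_F})$ in $\pi_1(\cM)=G$. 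Since $W_F\hookrightarrow W'$ is $\pi_1$-surjective, $(\pi_F)_*(\pi_1(W_F))$ coincides with this image. But by the equivariance $\sigma_F(h.\tau)=\phi_F(h).\sigma_F(\tau)$, the map $(\sigma_F^*)_*:\pi_1(W_F)\to\pi_1(V_F)$ is exactly $\phi_F$, which is surjective onto $\phi_F(\Gamma_F)$ by construction; composing with the covering inclusion $\pi_1(V_F)\hookrightarrow G$ gives $(\pi_F)_*(\pi_1(W_F))=\phi_F(\Gamma_F)$. Hence $[\Gamma:\phi_F(\Gamma_F)]\le\deg(\overline{\pi_F})<\infty$, so $\overline{V_F}$ is compact and the rest of your argument goes through.
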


\begin{proof}
Necessity follows from \cite[Theorem 5.1]{kps}. We now prove sufficiency. Suppose $\mu_F$ is nontrivial. Since $\mu_F(s)=-1/\sigma_F(-1/s)$, it suffices to show $\sigma_F$ is surjective on cusps of $\bbH$.  

Let us here for convenience denote by $G=\text{PMod}(S^2,P)$.  For
each liftable $h \in G_F$ and $\tau \in \bbH$, we have the
functional equation 
\[\sigma_F(h.\tau)=\phi_F(h).\sigma_F(\tau).\] 
It follows that we obtain the commutative diagram below, much as in
\cite[Figure 2]{kps}; the space $\cW$ is $\bbH/G_F$; the space
$\mathcal{Z}$ is $\bbH/\phi_F(G_F)$; the vertical map $\lambda$ is the usual holomorphic universal covering; the space $\cM$ is $\bbH/G$; the
maps $X$ and $\overline{\sigma_F}$ are holomorphic; the remaining maps
are holomorphic covering maps.  \[ \xymatrix{ \bbH \ar[dr] \ar[rrr]^{\sigma_F}
\ar[ddd]_\lambda &&& \bbH\ar[ddd]^\lambda \ar[ddl] \\ & \mathcal{W} \ar[ddl]^Y
\ar[dr]^{{\overline{\sigma}_F}}\ar@/ _1pc/[ddrr]_X&&\\ && \mathcal{Z}
\ar[dr] & \\ \cM &&& \cM.  } \] Since $\mu_F$ is nontrivial, $\phi_F$
is nontrivial, and $\sigma_F$ and $\overline{\sigma}_F$ are
nonconstant. Since $[G:G_F]$ is finite (\cite[Prop. 3.1]{kps}),
$\mathcal{\cW}$ is isomorphic to a compact Riemann surface punctured
at finitely many points, so the map $\overline{\sigma}_F: \mathcal{W}
\to \mathcal{Z}$ is surjective on ends.

Now suppose $t' \in \overline{\bbQ}$ is a cusp of the upper right copy
of $\bbH$. Let $e'_{\mathcal{Z}}$ be its image cusp in $\cZ$.  The
aforementioned surjectivity yields $t \in \overline{\bbQ}\subseteq \partial
\bbH$ at upper left and $e_{\mathcal{W}} \in \partial \mathcal{W}$
with $t \mapsto e_{\mathcal{W}} \mapsto e_\mathcal{Z}$. The cusps
$\sigma_F(t)$ and $t'$ project to the same point $e_\mathcal{Z} \in
\partial \mathcal{Z}$.  Hence there exists $h \in G_F$ with
$\phi_F(h).\sigma_F(t)=t'$.  By the functional equation, we have then
$\sigma_F(h.t)=\phi_F(h).\sigma_F(t)=t'$ as required.
\end{proof}

\section{How size controls geometry}
\label{secn:size}\nosubsections

In this section, we suppose $\mathbf{D}$ is a presentation diagram and
$F=F_{\bD}$ the corresponding NET map.  We further assume
$\geomsize(\mathbf{D}) \leq C$. We maintain $G=\text{PMod}(S^2,P)$.

Our first elementary observation is that $\deg(F)\leq C^2$; we use this repeatedly without mention in making further estimates. 

This brings us to the following result.

\begin{prop}
\label{prop:index_bound}
The index of the pure liftables satisfies 
  \begin{equation*}
[G:G_F] \leq \frac{2}{3}\deg(F)^3
\prod_{\substack{p|2\deg(F)\\p\text{ prime}}}(1-p^{-2})\leq\frac{1}{2}\deg(F)^3.
  \end{equation*}
Moreover, $G_F$ contains the projectivized principal congruence
subgroup $ \overline{\Gamma}(2\deg(F))$.
\end{prop}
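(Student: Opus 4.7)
The plan is to establish first the ``moreover'' assertion, namely $\overline{\Gamma}(2d) \subseteq G_F$ (where $d := \deg(F)$), and then to read off the two index bounds. For the containment I will use the decomposition $F = G \circ H$ from the Introduction, in which $G$ is the Euclidean NET map with linear part $A = [\lambda_1,\lambda_2]$ (so that $\det A = d$ and the identity $dA^{-1} = \mathrm{adj}(A)$ holds) and $H \in \Mod(S^2,P)$. Given $h \in \overline{\Gamma}(2d)$ represented by a matrix $M = I + 2dN$ with $N$ an integer matrix, I set
\[ M' \;:=\; A^{-1}MA \;=\; I + 2\,\mathrm{adj}(A)\,N\,A. \]
Thus $M'$ is an integer matrix congruent to $I\bmod 2$, and therefore represents a pure mapping class $\tilde{h}_G$. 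Matching linear parts (built into $M'$) with the congruence $Mb - b = 2dNb \in 2\bbZ^2$ for the translational discrepancy then shows $h \circ G \simeq G \circ \tilde{h}_G$ rel $P$. Setting $\tilde{h} := H^{-1}\tilde{h}_G H$---a pure mapping class because $\PMod(S^2,P)$ is normal in $\Mod(S^2,P)$---gives
\[ F \circ \tilde{h} \;=\; G \circ H \circ H^{-1}\tilde{h}_G H \;=\; G \circ \tilde{h}_G \circ H \;\simeq\; h \circ G \circ H \;=\; h \circ F \]
rel $P$, so $h \in G_F$ as required.

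For the index I apply the standard order formula $|\mathrm{SL}_2(\bbZ/N)| = N^3\prod_{p\mid N}(1 - p^{-2})$. Since $\mathrm{SL}_2(\bbZ/2d) \twoheadrightarrow \mathrm{SL}_2(\bbZ/2)$ is surjective, dividing orders gives
\[ [\Gamma(2):\Gamma(2d)] \;=\; \frac{(2d)^3\prod_{p\mid 2d}(1-p^{-2})}{6} \;=\; \tfrac{4}{3}\,d^3\prod_{p\mid 2d}(1-p^{-2}). \]
For $d \geq 2$ one has $-I \in \Gamma(2)$ but $-I \notin \Gamma(2d)$, so projectivization halves the count: $[\overline{\Gamma}(2):\overline{\Gamma}(2d)] = \tfrac{2}{3}\,d^3\prod_{p\mid 2d}(1-p^{-2})$. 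Combined with the containment, this is the first inequality of the statement. The second follows from $\prod_{p\mid 2d}(1-p^{-2}) \leq 1 - \tfrac{1}{4} = \tfrac{3}{4}$ (since $2 \mid 2d$ always), giving $[G:G_F] \leq \tfrac{2}{3}\,d^3 \cdot \tfrac{3}{4} = \tfrac{1}{2}\,d^3$.

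The only real friction lies in the containment step---specifically, in upgrading the matrix identity $MA = AM'$ to an honest isotopy $h \circ G \simeq G \circ \tilde{h}_G$ rel $P$. This reduces to the congruence $Mb - b \in 2\bbZ^2$ for the translational part, which is immediate from $M \equiv I \bmod 2d$, and is made transparent by the explicit recipe of \cite[Lemma 3.1(3)]{fpp1}. The subsequent $H$-conjugation demands nothing more than the normality of $\PMod$ in $\Mod$, so no genuinely new difficulty enters.
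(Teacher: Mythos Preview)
Your argument is correct. The containment $\overline{\Gamma}(2d)\subseteq G_F$ via the factorization $F=G\circ H$ is sound: with $G([x])=[\Psi(x)]$ on the quotient pillowcase, the equation $h\circ G\simeq G\circ\tilde h_G$ unwinds exactly to the linear condition $MA=AM'$ together with the translational congruence $Mb\equiv b\bmod 2\bbZ^2$, both of which you verify. The subsequent $H$-conjugation and the index calculation are standard and accurate.

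By way of comparison, the paper does not argue this directly: it simply invokes Proposition~3.4 of \cite{fpp2}, which bounds the index of the \emph{full} modular group liftables inside $\Mod(S^2,P)$, and then passes to the pure modular group using $[\Mod:\PMod]=24$. Your proof bypasses this detour by working entirely inside $\PMod(S^2,P)=\overline{\Gamma}(2)$ from the start, establishing the congruence-subgroup containment explicitly and reading off the index from the classical formula for $|\mathrm{SL}_2(\bbZ/N)|$. The net effect is a more self-contained argument than the paper's citation, though the underlying mechanism---checking liftability of an affine automorphism through the Euclidean factor $G$ by a matrix conjugation---is the same one that drives the cited result. What you gain is transparency and independence from \cite{fpp2}; what the paper's route gains is brevity and a slightly stronger statement about the full modular group liftables, which is not needed here.
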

  \begin{proof} Proposition 3.4 of \cite{fpp2} provides an estimate
for the index of the modular group liftables for $F$ in the modular
group of $F$.  That proof and the fact that the index of the pure
modular group in the modular group is 24 yield the first
inequality and the final statement. The second inequality comes from
the product factor for the prime 2.
\end{proof}

\begin{prop}[Small cusps mapping to each cusp in moduli space]
\label{prop:simple_input_nontrivial_output} Suppose $\sigma_F$ is not
a constant map. Let $\mathcal{E}:=\{0/1, 1/0, 1/1\} \subset
\overline{\mathbb{Q}}$.  Then for each end $e \in \cE$, there exists a
cusp $t_e$ with $\sigma_F(t_e)\in G.e$ and $\height(t_e) \leq
\deg(F)$.  
\end{prop}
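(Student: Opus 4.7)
My plan is to combine the surjectivity of $\sigma_F$ on cusps (Theorem~\ref{thm:trivial_or_surjective}) with the explicit integer-matrix description of $\sigma_F$ for a NET map.

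First I would exploit the decomposition $F = G \circ H$ from the presentation diagram. This gives $\sigma_F = \sigma_H \circ \sigma_G$, where $\sigma_G$ is the M\"obius transformation determined by the affine matrix $A$ of the diagram (with $\det A = \deg(F)$) and $\sigma_H$ is the M\"obius transformation associated to the mapping class of $H$ (with determinant $1$). Composing these endows $\sigma_F$ with the structure of a M\"obius action on $\overline{\bbQ}$ by an integer matrix $M$ with $\det M = \deg(F)$.

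Next I would translate the problem to parities. Since $G = \overline{\Gamma}(2)$ acts trivially modulo $2$, the three $G$-orbits of cusps $G.0$, $G.\infty$, $G.1$ correspond exactly to the three nonzero parity classes of $(p, q) \bmod 2$ in $(\bbZ/2)^2$. So it suffices to exhibit, for each such parity class $\pi$, a primitive $(p, q) \in \bbZ^2$ with $\max(|p|, |q|) \leq \deg(F)$ such that the primitive-form reduction of $M(p, q)$ has parity $\pm \pi \pmod 2$.

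I would then split into cases based on the parity of $\deg(F)$. When $\deg(F)$ is odd, $M$ is invertible modulo $2$ and induces a bijection on the three nonzero parity classes; moreover the gcd of $M(p, q)$ is then odd (so parity-preserving under reduction), and the three cusps $\{0, 1, \infty\}$ of height $1$ map under $\sigma_F$ to three distinct $G$-orbits, yielding the required $t_e$ immediately.

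The main obstacle is the case when $\deg(F)$ is even: then $M$ is singular modulo $2$, and the primitive-reduction parity of $M(p, q)$ depends on higher $2$-adic information of $(p, q)$, not just its parity. I would handle it by considering cusps $p/2^j$ with $p$ odd, $\gcd(p, 2^j) = 1$, and $2^j \leq \deg(F)$; the constraint $\det M = \deg(F)$ controls the $2$-adic depth at which the Smith normal form of $M$ stabilizes, which should ensure that all three target parity classes are realized by primitive-form reductions at height $\leq \deg(F)$. Surjectivity from Theorem~\ref{thm:trivial_or_surjective} is what guarantees that $M$ cannot be so degenerate modulo high powers of $2$ as to concentrate its image in fewer than three parity classes.
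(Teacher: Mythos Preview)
Your proposal rests on a false premise: for a non-Euclidean NET map, $\sigma_F$ is \emph{not} a M\"obius transformation given by an integer matrix. The decomposition $F=G\circ H$ does not yield $\sigma_F=\sigma_H\circ\sigma_G$ with both factors M\"obius self-maps of $\bbH=\mathrm{Teich}(S^2,P)$, because the push homeomorphism $H$ does not preserve the marked set $P$; it moves points of $P$ to other $G$-preimages of $P$. Hence $H$ does not represent an element of $\Mod(S^2,P)$, and there is no map ``$\sigma_H$'' acting on this Teichm\"uller space. Two quick sanity checks confirm that $\sigma_F$ cannot be M\"obius: first, the slope function $\mu_F$ can take the value $\nonslope$ (curves can become inessential under pullback), which is impossible for a fractional linear map on $\overline{\bbQ}$; second, any real M\"obius transformation with positive determinant is a hyperbolic isometry, whereas the paper uses throughout that $\|d\sigma_F\|<1$ strictly on compacta (this is the constant $c(\cH)<1$ in Theorem~\ref{thm:short_cf}). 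So the parity and Smith-normal-form analysis that follows is built on sand.

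The paper's argument is quite different and does not attempt to compute $\sigma_F$ on cusps directly. It first invokes Theorem~\ref{thm:trivial_or_surjective} to obtain \emph{some} cusp $t$ with $\sigma_F(t)=e$, and then uses the functional equation $\sigma_F(h.t)=\phi_F(h).e\in G.e$ for $h$ in the liftable subgroup $G_F$. The key structural input is Proposition~\ref{prop:index_bound}: $G_F$ contains the principal congruence subgroup $\overline{\Gamma}(2\deg F)$. Since $\mathrm{PSL}_2(\bbZ)$ acts transitively on cusps and one can choose coset representatives for $\overline{\Gamma}(2\deg F)$ with entries bounded by $\deg F$, the orbit $G_F.t$ must contain a cusp of height at most $\deg F$. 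This bypasses entirely any need to know how $\sigma_F$ acts on specific small cusps.
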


\begin{proof} Let $e\in \cE$.  Theorem~\ref{thm:trivial_or_surjective} implies that there exists a cusp $t$ with $\sigma_F(t)=e$. 
Moreover, if $h \in G_F$ then 
  \begin{equation*}
\zs_F(h.t)=\phi_F(h).\zs_F(t)=\phi_F(h).e\in G.e.
  \end{equation*}
It remains to show that we can find simple cusps in $G_F.t$.

Proposition \ref{prop:index_bound} asserts that the group $G_F$ of
liftable pure modular group elements contains the projectivized
principal congruence subgroup $\overline{\Gamma}(2\deg(F))=\ker\pi$
where $\pi: \mathrm{PSL}_2(\Z) \to \mathrm{PSL_2}(\Z/2\deg(F)\Z)$.
Thus there exists a finite set $\{g_1, g_2, \ldots, g_n\}$ of coset
representatives for $\overline{\Gamma}(2\deg(F))$ in
$\mathrm{PSL}_2(\Z)$ each of which has entries in $\{0,\pm
1,\dotsc,\pm \deg(F)\}$.  Since $\mathrm{PSL}_2(\Z)$ acts transitively
on $\overline{\bbQ}$, it follows that the orbit $G_F.t$ contains an
element of the set $\{g_j.\frac{1}{0} | j=1,\dotsc, n\}$.  Since the
entries of $g_j$ lie in $\{0,\pm 1,\dotsc,\pm \deg(F)\}$ we conclude
that it is possible to find $t_e$ such that $\height(t_e) \leq \deg(F)$.

This proves Proposition \ref{prop:simple_input_nontrivial_output}.
\end{proof}

The proof of the next result uses the geometry of the dynamical plane,
so it is initially formulated in terms of slopes, as opposed to
cusps.

\begin{prop}[Linear height distortion]
\label{prop:linear_height_distortion} If $s$ is a slope such that
$s':=\mu_F(s)\neq \nonslope$, then $\height(s')\leq L\cdot \height(s)$
where $L=125C$. Thus for cusps $t$ such that $\zs_F(t)\in
\overline{\bbQ}$, we have correspondingly that $\height(\sigma_F(t))
\leq L\cdot \height(t)$.
\end{prop}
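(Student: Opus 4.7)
The plan is to work with direction vectors and the factorization $F = G \circ H$ from the definition of $F_{\bD}$, bounding the height distortion under each factor separately and then composing. Represent a slope $s = p/q$ with $\gcd(p,q)=1$ by its primitive integer direction vector $v_s = (q,p)^T$, so that $\height(s) = \|v_s\|_\infty$. By contravariance of pullback, $\mu_F = \mu_{H^{-1}}\circ\mu_G$. Since $\height(-1/s) = \height(s)$ and $\sigma_F(-1/s) = -1/\mu_F(s)$, the cusp version of the estimate will follow immediately from the slope version, and so it suffices to establish the bound on $\mu_F$.

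First I would handle the affine factor $G$, induced by $\Psi(x)=Ax+b$ with $A = [\lambda_1\ \lambda_2]$. A line of direction $v_s$ has preimages of direction $A^{-1} v_s$, or equivalently of integer direction $\mathrm{adj}(A)\,v_s$. Each entry of $A$ has absolute value at most $\geomsize(\bD)\le C$, hence so does each entry of $\mathrm{adj}(A)$, and every coordinate of $\mathrm{adj}(A)\,v_s$ is thus bounded in absolute value by $2C\cdot\height(s)$. Reducing by the gcd only decreases the max-norm, so $\height(\mu_G(s)) \le 2C\cdot\height(s)$.

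Next I would handle the twist factor $H$, obtained by pushing basepoints along the arcs $\pi(\Psi^{-1}(\alpha_j))$ in the pillowcase associated to the green arcs $\alpha_j$ of the diagram. The induced slope action is by an element of $\mathrm{PSL}_2(\bbZ)$ which is computable from the diagram using the algorithm of \cite[Section~5]{cfpp}; chasing this algorithm and combining with the $G$-estimate above yields the composite bound $\height(\mu_F(s))\le 125\,C\cdot\height(s)$.

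The main obstacle will be the analysis of the twist contribution. Although the $G$-part is immediate from matrix algebra, the arcs $\pi(\Psi^{-1}(\alpha_j))$ can wind many times in the pillowcase when $\|A^{-1}\|$ is comparable to $C$, so one cannot naively treat $H$ as a ``short'' mapping class with universally bounded slope action. Rather, one must track cancellations between the $G$- and $H$-contributions in the composite $\mu_F$ via the algorithm of \cite[Section~5]{cfpp}, in order to see that the effective total distortion is merely linear in $C$ rather than polynomial, and to pin down the explicit multiplier $125$.
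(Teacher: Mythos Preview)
Your proposal has a genuine gap, and in fact the factorization strategy is more delicate than you allow.

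First, the decomposition $\mu_F=\mu_{H^{-1}}\circ\mu_G$ is not valid as written. The homeomorphism $H$ produced by the presentation does \emph{not} preserve $P$; rather $H(P)$ is a four-point subset of $G^{-1}(P)$, generally different from $P$. So $H$ does not represent an element of $\mathrm{PMod}(S^2,P)$ and there is no single matrix in $\mathrm{PSL}_2(\bbZ)$ through which its effect on slopes relative to $P$ factors. Concretely, the components of $G^{-1}(\gamma_s)$ all have the same slope $\mu_G(s)$ relative to $P$, but relative to $H(P)$ some of them may be peripheral or inessential; this is exactly how $\mu_F(s)=\odot$ can occur even though $\mu_G(s)$ is always a genuine slope. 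Thus you cannot compute $\mu_F$ by first reducing $G^{-1}(\gamma_s)$ to its slope rel $P$ and then post-composing with a fixed M\"obius map.

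Second, even granting the decomposition, you do not actually bound the $H$-contribution: you assert that ``chasing this algorithm \ldots\ yields the composite bound'' and then immediately flag this as the main obstacle. That is not a proof, and your worry that the push arcs $\pi(\Psi^{-1}(\alpha_j))$ can wind arbitrarily is in fact misplaced (they are straight segments lying in a single $2\times 1$ fundamental rectangle), but this observation alone does not repair the argument because of the first issue.

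The paper does not factor through $G$ and $H$ at all. Instead it uses the photon/spin-mirror model of \cite[\S5]{cfpp} directly for $\mu_F$: a segment $S$ of direction $(q,p)$ and length $2d\,|(q,p)|$ is transported by $A^{-1}$ to a segment of $\|\cdot\|_\infty$-length at most $4Ch$; the output direction $(q',p')$ differs from this only by the contributions of the spin mirrors that $S$ meets, each of $\|\cdot\|_\infty$-length at most $4$ after applying $A^{-1}$; and the number of spin mirrors encountered is bounded by an elementary lattice-point count along the transported segment, giving at most $6\cdot 4Ch+8$. Summing yields $h'\le 25\cdot 4Ch+32\le 125Ch$. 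In this picture the ``affine'' and ``twist'' effects are combined additively in a single geometric estimate, which is what makes the linear-in-$C$ bound fall out; no cancellation between separately computed $G$- and $H$-distortions is needed.
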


It is easy to see that such an upper bound must be at least linear in $C$. 

\begin{proof} Recall the algorithm from \cite[Section 5]{cfpp} for
evaluating $\zm_F$ given the presentation diagram $\mathbf{D}$.  Let $\zl_1$
and $\zl_2$ be the integer lattice vectors determined by $\mathbf{D}$.  Let
$\zL_1$ be the lattice generated by $\zl_1$ and $\zl_2$.  Let $F_1$ be
the parallelogram displayed in $\mathbf{D}$.  Let $\zG_1$ be the group
generated by $x\mapsto 2\zl-x$ for $\zl\in \zL_1$, so that $F_1$ is a
fundamental domain for $\zG_1$.  We tile $\bbR^2$ with the
$\zG_1$-translates of $F_1$.  The connected components of the union of
the $\zG_1$-translates of the green arcs in $F_1$ are called spin
mirrors.

Now let $s\in \overline{\bbQ}$ such that $s':=\zm_F(s)\ne \odot $.
Suppose that $s=\frac{p}{q}$ and $s'=\frac{p'}{q'}$, where $p$, $q$
and $p'$, $q'$ are two pairs of relatively prime integers.  Here we
briefly recall how we compute $s'$.  Let $d=d(s)$ be the local
covering degree associated to slope $s$. We choose an appropriately
generic point $v\in \bbR^2$, and we let $w=v+2d(q,p)$.  We imagine a
photon starting at $v$ and beginning to travel in a line toward $w$
(hence in the direction of $(q,p)$).  It travels toward $w$ until it
hits a spin mirror.  At the spin mirror, it rotates $180$ degrees
about the midpoint of the spin mirror and then proceeds in the
direction of $-(q,p)$.  It continues in this way, traveling in the
direction of either $(q,p)$ or $-(q,p)$.  Every time that it hits a
spin mirror, it rotates $180$ degrees about the midpoint of the spin
mirror and reverses direction.  Let $w'$ be the point the photon
reaches after traveling a distance $|v-w|$ along the line segments
with direction $\pm (q,p)$ (hence not counting spins).  Then $v-w'$ is
an integer multiple of $q'\zl_1+p'\zl_2$.  We use this description of
$p'$ and $q'$ to estimate their sizes.

Let $S$ be the line segment with endpoints $v$ and $w$.  By
definition, the distance that the photon travels while traveling
parallel to $S$ is the length of $S$.  The distance that it travels
during spins is at most the sum of the lengths of the spin mirrors
which $S$ meets.

Next let $||(x,y)||=\max\{|x|,|y|\}$ for every $(x,y)\in \bbR^2$.  Let
$h=\height(s)=||(q,p)||$ and $h'=\height(s')=||(q',p')||$.  Let
$A$ be the $2\times 2$ matrix whose columns are $\zl_1$ and $\zl_2$,
and let $\zF\co \bbR^2\to \bbR^2$ be the linear transformation defined
by $\zF(x)=A^{-1}x$.  Then $h'=||(q',p')||=||\zF(q'\zl_1+p'\zl_2)||$.
So $h'$ is at most the $||\cdot ||$-length $\ell (\zF(S))$ of $\zF(S)$
plus the sum of the $||\cdot ||$-lengths of the spin mirror
images under $\zF$ which $\zF(S)$ meets.

Now we estimate $\ell (\zF(S))$.  Using the fact that
$\text{det}(A)=\deg(F)$, we have that
  \begin{align*}
\ell (\zF(S)) & =||\zF(v-w)||=||\zF(2d(q,p))||
 =2d||A^{-1}\left[\begin{smallmatrix}q\\ p\end{smallmatrix}\right]||
\le \frac{4dCh}{\deg(F)}\le 4Ch.
  \end{align*}

It remains to estimate what the spin mirrors contribute to $h'$.  Note
that $\zF(F_1)$ is the rectangle with corners at $(0,0)$, $(2,0)$,
$(0,1)$ and $(2,1)$.  The map $\zF$ transforms the tiling of $\bbR^2$
by translates of $F_1$ to a tiling $\cT$ of $\bbR^2$ by translates of
this rectangle.  So the $||\cdot ||$-length of the image under $\zF$
of every spin mirror is at most 4.  The midpoint of each of these spin
mirror images is a lattice point in $\bbZ^2$.

We finally estimate the number of lattice points in $\bbZ^2$ which lie
in tiles of $\cT$ which meet $\zF(S)$.  We may choose $v$ so that
$\zF(S)$ contains no lattice points and the coordinates of the
endpoints of $\zF(S)$ are not integers.  Let $X$ be the union of all
closed rectangles with width 2, height 1 and corners in $\bbZ^2$ which
meet $\zF(S)$.  (Not all of these are in $\cT$.)  Let $a$,
respectively $b$, be the absolute value of the difference of the $x$,
respectively $y$, coordinates of the endpoints of $\zF(S)$.  So $\ell
(\zF(S))=||(a,b)||$.  An induction argument based on the number of
points in $\zF(S)$ which have an integer coordinate proves that the
number of lattice points in $X$ is $2a+4b+8$.  See
Figure~\ref{fig:countlatpts}, where $2a$ lattice points are drawn with
squares, $4b$ lattice points are drawn with circles and 8 lattice
points are drawn with $\times $'s.  So the spin mirror contribution to
$h'$ is at most $(6\ell(\zF(S))+8)\cdot 4$.

  \begin{figure}
\centerline{\includegraphics{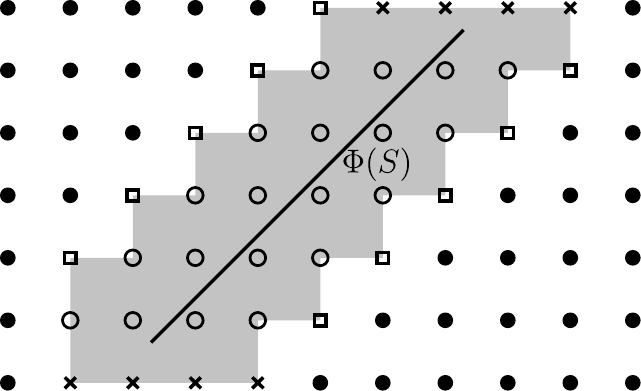}}
\caption{Counting the number of lattice points near $\zF(S)$}
\label{fig:countlatpts}
  \end{figure}

Assembling the above and using the fact that $C\ge \sqrt{2}>7/5$, we
have that
  \begin{align*}
h' & \le \ell (\zF(S))+(6\ell (\zF(S))+8)4\le 25\ell (\zF(S))+32
 \le 25\cdot 4Ch+32\le 125Ch.
  \end{align*}

This proves Proposition~\ref{prop:linear_height_distortion}.
\end{proof}
Below, $d_\bbH$ denotes hyperbolic distance. 

\begin{thm}[Bounded displacement of basepoint]
\label{thm:bounded_displacement}
We have 
\[ d_{\bbH}(i, \sigma_F(i))\leq 30\log(5C)+\log16\lesssim \log(C);\]
the implicit constant is universal. 
\end{thm}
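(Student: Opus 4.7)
The plan is to choose three low-height cusps whose $\sigma_F$-images are also controlled in height, via Propositions \ref{prop:simple_input_nontrivial_output} and \ref{prop:linear_height_distortion}, and then to combine this with the Schwarz--Pick contraction of $\sigma_F$ and an explicit horocycle computation to bound $d_\bbH(i,\sigma_F(i))$.

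Assume first that $\sigma_F$ is non-constant. Applying Proposition \ref{prop:simple_input_nontrivial_output} to each end $e \in \cE = \{0/1, 1/0, 1/1\}$, choose a cusp $t_e$ with $\height(t_e) \leq \deg(F) \leq C^2$ and $s_e := \sigma_F(t_e) \in G.e$, where $G=\mathrm{PMod}(S^2,P)$. Because the three orbits $G.e$ are pairwise disjoint, the cusps $s_0, s_1, s_\infty$ are distinct. By Proposition \ref{prop:linear_height_distortion}, $\height(s_e) \leq 125 C \cdot \deg(F) \leq (5C)^3$. For each such cusp $p/q$ in lowest terms with $\max(|p|,|q|) \leq (5C)^3$, its Ford horoball (Euclidean diameter $1/q^2$, or $\{\Im z \geq 1\}$ for $1/0$) sits within hyperbolic distance bounded by $6\log(5C) + O(1)$ of $i$.

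The main geometric step is to show that $\sigma_F$ transports a sub-horoball $B'_{t_e} \subset B_{t_e}$ into the Ford horoball $B_{s_e}$, with $d_\bbH(\partial B_{t_e}, \partial B'_{t_e}) = O(\log C)$. This follows from Selinger's extension of $\sigma_F$ to the Weil--Petersson completion, combined with the fact that $\sigma_F$ acts near $t_e$ as a branched cover of local degree $d(-1/t_e) \leq \deg(F) \leq C^2$ in horocyclic coordinates. For each $e$, let $z_e \in \partial B'_{t_e}$ be a point closest to $i$; then $d_\bbH(i, z_e) = O(\log C)$, and Schwarz--Pick gives $d_\bbH(\sigma_F(i), \sigma_F(z_e)) \leq d_\bbH(i, z_e)$. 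Since $\sigma_F(z_e) \in B_{s_e}$ lies within $O(\log C)$ of $i$, $\sigma_F(i)$ is within $O(\log C)$ of each of the three horoballs $B_{s_0}, B_{s_1}, B_{s_\infty}$. Because $s_0, s_1, s_\infty$ are three distinct boundary cusps, the three neighborhoods of the horoballs $B_{s_e}$ jointly confine $\sigma_F(i)$ to a bounded hyperbolic ball about $i$, and careful tracking of the constants through the horocycle computation yields the explicit bound $30\log(5C) + \log 16$.

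The principal technical obstacle is to quantify the horoball-transport step precisely: determining exactly how much $B_{t_e}$ must be shrunk to $B'_{t_e}$ for $\sigma_F(B'_{t_e}) \subset B_{s_e}$, and accounting for the correct constants. This is a local Schwarz--Pick computation at each cusp, controlled by the degree bound $d(-1/t_e) \leq \deg(F) \leq C^2$. Separately, the degenerate case where $\sigma_F$ is constant must be dispatched, since Proposition \ref{prop:simple_input_nontrivial_output} does not apply in that situation; one would either rule it out under the hypotheses of this section or give a direct bound using the explicit realization of $F$ on the standard square pillowcase.
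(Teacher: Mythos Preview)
Your strategy is the right one and matches the paper's, but the paper executes it more directly, and your version has some avoidable detours.

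First, the paper uses only \emph{two} cusps, not three. The trick is to pick the two orbits $G.(0/1)$ and $G.(1/1)$, thereby guaranteeing that the image cusps $t_0',t_1'$ are \emph{finite}. This avoids having to deal with a horoball at $\infty$ in the intersection step, and two finite horoballs already have compact intersection.

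Second, and more importantly, the paper bypasses your whole ``horoball transport plus Schwarz--Pick via auxiliary points $z_e$'' maneuver. Instead of shrinking Ford horoballs until they map into Ford horoballs, the paper takes, for each chosen cusp $t_j$, the horoball $B_j$ whose boundary \emph{passes through} $i$. On the square torus $\bbC/\langle 1,i\rangle$ this means the modulus $m_j$ of the curve family of slope $-1/t_j$ is exactly $(p_j^2+q_j^2)^{-1}$. The Gr\"otzsch inequality then gives directly that at $\sigma_F(i)$ the modulus of the image curve family is at least $m_j/\deg(F)$, which places $\sigma_F(i)$ inside an explicit horoball $B_j'$ at $t_j'$ of Euclidean radius at most $C^6$. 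No Selinger extension, no local branched-cover analysis, no auxiliary points, no Schwarz--Pick: just one application of Gr\"otzsch to the single point $i$.

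From there the paper sets $B=(5C)^6$, so that $|t_0'|,|t_1'|<B$ and $|t_0'-t_1'|\ge B^{-1}$, shows $\sigma_F(i)\in K:=B_0'\cap B_1'$ where each $B_j'$ has Euclidean radius $B$, and bounds $\diam_{\bbH}(K)\le\log(16B^4)$ and $d_\bbH(i,K)\le\log B$ by elementary computations. Adding these gives $5\log B+\log 16=30\log(5C)+\log 16$.

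Your approach would also work, but the constant-tracking you defer is genuinely messier your way: you have to bound $d_\bbH(i,\text{Ford horoball at }t_e)$, then the horoball depth, then the triangle inequality through $\sigma_F(z_e)$, then intersect three enlarged horoballs one of which may be at $\infty$. The paper's choice of horoballs through $i$ eliminates the first step entirely, and choosing finite image cusps makes the intersection estimate a clean two-circle computation.

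Finally, your remark about the constant case is reasonable caution, but under the standing non-Euclidean hypothesis the paper invokes Proposition~\ref{prop:simple_input_nontrivial_output} without further comment.
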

Easy examples show that this displacement can be at least a constant times $\log(C)$, so the form of the estimate is sharp.

\begin{proof} Proposition \ref{prop:simple_input_nontrivial_output}
provides cusps $t_0=p_0/q_0$, $t_1=p_1/q_1$ of heights $h_0, h_1$ at
most $\deg(F)\leq C^2$ such that their images $t_0'=p_0'/q_0'$,
$t_1'=p_1'/q_1'$ under $\sigma_F$ lie in the cusp orbits
$\overline{\Gamma}(2).0/1$ and $\overline{\Gamma}(2).1/1$,
respectively. Thus neither $t_0'$, $t_1'$ is the point at infinity.
Setting $L:=125C$ to be the constant from Proposition
\ref{prop:linear_height_distortion}, we conclude $h_j':=\height(t_j')
\leq LC^2$.  This implies that $|t_j'| \leq LC^2$, $j=0,1$, and
$|t_0'-t_1'| \geq (LC^2)^{-2}$. Let $B:=(LC^2)^2=(5C)^6$.  Sacrificing
tightness of bounds for convenience of exposition, we conclude: \[
|t_0'|< B, \;\; |t_1'|< B, \;\; |t_0'-t_1'| \geq B^{-1}.\]

Let $B_j$ be the unique closed horoball tangent to $t_j$ for which the
horocycle $\partial B_j$ contains the basepoint $i=\sqrt{-1}$. On the
corresponding basepoint (square) torus $\bbC/\langle 1, i\rangle$, the
family of curves with slope $s_j:=-1/t_j$ has modulus
$m_j:=(p_j^2+q_j^2)^{-1}$. The height bound from the previous
paragraph gives $m_j \geq h_j^{-2}/2 \geq C^{-4}/2$. The Gr\"otzsch
inequality implies that on the torus $\bbC/\langle 1,
\sigma_F(i)\rangle$, the modulus $m_j'$ of the family of curves with
slope $s_j':=-1/t_j'$ satisfies $m_j' \geq \delta(s_j)m_j \geq
m_j/\deg(F)\geq C^{-6}/2$.  The locus in $\bbH$ where the modulus of
the curve family with slope $s_j'$ is exactly $m_j'$ is the circle
tangent to $t_j'$ of Euclidean radius $r_j'=\frac{1}{2m_j' (q_j')^2}$.
Using the inequality $(q_j')^2\geq 1$, it follows that $\sigma_F(i)$
lies inside the intersection of the closed horoballs tangent to $t_j'$
and of Euclidean radius $r_j'<C^6$. Again sacrificing tightness of
bounds for convenience of exposition, we conclude $\sigma_F(i) \in
K:=B_0' \cap B_1'$, where $B_j'$ is the closed horoball tangent to
$t_j'$ of Euclidean radius $B$.

In this paragraph, we bound the hyperbolic diameter of $K$.  Recall
the horoballs $B_j'$ have common Euclidean radius $B$. It follows that
the maximum value of $K$ occurs when $|t_0'-t_1'|=B^{-1}$. Since
$x\mapsto B^{-1}x$ is a hyperbolic isometry, to compute
$\diam_{\bbH}(K)$ we may assume that the horoballs $B'_j$ have radius
1 and that $|t_0'-t_1'|=B^{-2}$.  An easy calculation gives
$\diam_{\bbH}(K) = \log((1+A)/(1-A))$, where $A=\sqrt{1-B^{-4}/4}$.
Since $(1+A)/(1-A)<4/(1-A^2)$ for $0<A<1$, we find that
$\diam_{\bbH}(K)<\log(16B^4)$.

In this paragraph, we bound the hyperbolic distance from $K$ to $i$.
Since $|t'_j|<B$ and $B'_j$ has radius $B$, we have that $Bi\in B'_j$.
Hence the path which runs vertically from $i$ to $Bi$ joins $i$ and
$K$.  Hence the hyperbolic distance from $K$ to $i$ is at most
$\log(B)$. 

Combining the previous two paragraphs, we conclude $d_{\bbH}(i,\sigma_F(i)) \leq d_{\bbH}(i, K) + \diam_{\bbH}(K) \leq \log(B^5)+\log16$, as required. 
\end{proof}

\section{Fixed cusps have short continued fraction expansions}
\label{secn:pf_thm_short_cf}\nosubsections

We prove Theorem \ref{thm:short_cf} in this section.  The idea is
quite simple. Let $\gamma$ be a geodesic from the basepoint
$\sqrt{-1}=i \in \bbH$ to the boundary of the unit modulus horoball
$B$ tangent to a fixed cusp $t$. Theorem
\ref{thm:bounded_displacement} says the size bound $C$ controls the
hyperbolic distance $d_{\bbH}(i,\sigma_F(i))$. We will prove that if
the continued fraction expansion of $t$ is long, the hyperbolic length
of $\sigma_F(\gamma)$ is much smaller than that of $\gamma$. Since $t$
is fixed, $\sigma_F(B)\approx B$. But this is impossible.

\begin{proof}
We begin with an elementary observation about the hyperbolic geometry of a regular ideal quadrilateral $Q$. Consider Figure \ref{fig:fareyquad}, which shows $Q$ outlined in bold lines in the disk model of the hyperbolic plane. 
  \begin{figure}
\centerline{\includegraphics[width=2in]{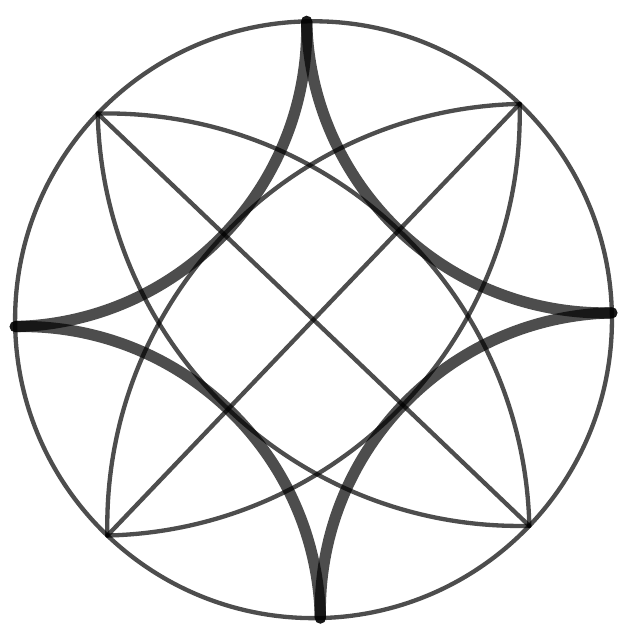}}
\caption{The distance between opposite sides of a regular ideal
quadrilateral is the constant $a$. Any geodesic joining opposite sides
contains a segment of length at least $a$ contained in the union of
the two $\frac{a}{2}$-neighborhoods of the two perpendiculars. }
\label{fig:fareyquad}
  \end{figure}
Let $a:=4\tanh^{-1}(\sqrt{2}-1)\approx 0.88137$ be the distance
between opposite sides of $Q$. Let $X_Q \subset Q$ be the union of the
two closed hyperbolic $\frac{a}{2}$-neighborhoods of the two geodesics perpendicular to each pair of opposite sides.  If $\gamma$ is a hyperbolic geodesic meeting opposite sides of $Q$, then the intersection of $\gamma$ with $X_Q$ is a segment of length at least $a$.  

The moduli space $\bbH/\overline{\Gamma}(2)$ is a quotient of $Q$
obtained by side-pairings with isometries. Let $X \subset
\bbH/\overline{\Gamma}(2)$ be the image of $X_Q$ under this natural
projection, and $\widetilde{X}:=\lambda^{-1}(X)$ its lift to the upper
half-plane, where $\zl\co \bbH\to \bbH/\overline{\zG}(2)$ is the
canonical quotient map. Since $X$ is compact, and since
$||d\sigma_F(x)||<1$ is a continuous function on a finite cover $\cW$
of moduli space $\bbH/\overline{\Gamma}(2)$ (from Section
\ref{secn:surjective} or \cite[Lemma 5.2]{DH}), there is a real number
$c$ such that
\[ \max\{ ||d\sigma_F(x)|| : x \in \widetilde{X}\} \leq c<1.\]
Composing $F$ by an element of $\Mod(S^2, P)$ changes $\sigma_F$ by
composition with an isometry, so $c$ depends only on the modular group
Hurwitz class $\cH$ of $F$ as defined in \cite{fpp2}.

Now suppose $t=\sigma_F(t)$ is a fixed cusp.  We may assume $t \neq 1/0$.  Let
$t=[a_0,\dotsc,a_n]$, so that the nonnegative integer $n$ is the
length of this continued fraction.  Let $t:=p/q$ for relatively prime
integers $p$, $q$ and let $B$ be the closed horoball tangent to $t$ of
radius $r:=\frac{1}{2q^2}$. The Gr\"otzsch inequality implies that
$\sigma_F(B)$ is contained in the closed horoball $B'$ tangent to $t$
of radius $r':=\deg(F)\cdot r$. To ease notation in the displays
below, we write $d(\cdot, \cdot)$ for hyperbolic distance.  Note that
$d(\partial B, \partial B')=\log\deg(F)$.

Let $\gamma$ be the closure of the segment of the geodesic from the basepoint $i$ to $\partial B$, and $\ell(\gamma)$ its hyperbolic length. Let $\gamma':=\sigma_F(\gamma)$. Then $\gamma'$ joins $\sigma_F(i)$ to $B'$ and so by the previous paragraph and the triangle inequality 
\[ d(i,B) =\ell(\gamma) \leq d(i,\sigma_F(i)) + \ell(\gamma')+\log\deg(F).\]
We will show 
\begin{equation}
\label{eqn:additive_contraction}
 \ell(\gamma')\leq \ell(\gamma)-a \cdot (n-1)\cdot(1-c).
 \end{equation}
Assuming inequality (\ref{eqn:additive_contraction}) for the moment, we conclude that 
\[ n \leq N:=1+\left(d(i,\sigma_F(i)\right)+\log \deg(F))\cdot (1-c)^{-1}a^{-1}.\] 
Appealing to Theorem \ref{thm:bounded_displacement}, we conclude 
\[ N \leq 1+\left(30\log(5C)+\log(16)+2\log(C)\right)\cdot (1-c)^{-1}a^{-1}\]
where the constant $c$ depends only on the Hurwitz class of $F$ and
the constant $a$ is universal.  This gives the conclusion of 
Theorem~\ref{thm:short_cf}.
 
We now establish inequality (\ref{eqn:additive_contraction}).  
Consider Figure \ref{fig:fareystrip}. This draws the hyperbolic
plane in the band model, and the geodesic through $i$ and asymptotic
to $t$ as the horizontal line of symmetry of the band.  The triangles drawn represent, combinatorially, the Farey tiling.  
  \begin{figure}
\centerline{\includegraphics[width=4in]{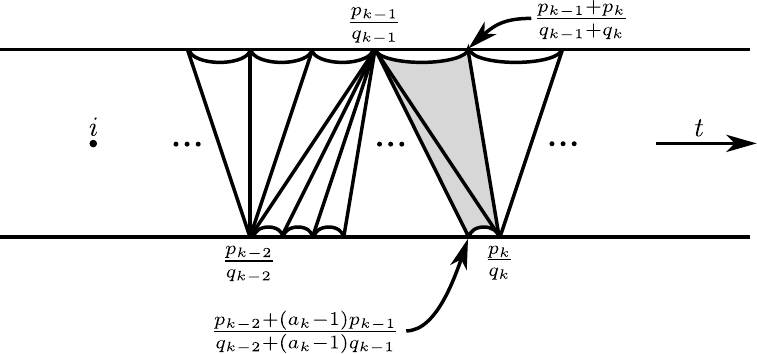}}
\caption{The geodesic $\gamma$ joins $i$ and $t$ and is the horizontal line containing the three triples of dots at the center of the band. It 
must join opposite sides of at least $n-1$ distinct Farey quadrilaterals, one of which is shaded.}
\label{fig:fareystrip}
  \end{figure}
The convergents of $t=[a_0,\dotsc,a_n]$ are
$\frac{p_0}{q_0},\dotsc,\frac{p_n}{q_n}$.  The quadrilaterals, such as
the one shaded, whose interior geodesic joins two consecutive
convergents of $t$ are Farey quadrilaterals.  The horoball $B$ is
chosen so that its interior is disjoint from every Farey quadrilateral
which does not have a vertex at $t=\frac{p_n}{q_n}$.  So $\zg$ must
join opposite sides of every Farey quadrilateral corresponding to
convergent pairs
$(\frac{p_0}{q_0},\frac{p_1}{q_1}),\dotsc,
(\frac{p_{n-2}}{q_{n-2}},\frac{p_{n-1}}{q_{n-1}})$.  Let
$\gamma_j$, $j=1, \ldots, n-1$ be the subsegments of $\gamma$
comprising the intersections of $\gamma$ with these Farey
quadrilaterals, so that $\ell(\gamma_j) \geq a$, and let $\gamma_0:=\gamma - \cup_j \gamma_j$.  Let $\gamma_j':=\sigma_F(\gamma_j)$ and $\gamma_0'=\sigma_F(\gamma_0)$. Then 
  \begin{align*}
\ell(\gamma') & \le\ell(\gamma_0')+\sum_j \ell(\gamma_j') 
\leq \ell(\gamma_0)+\sum_jc\cdot \ell(\gamma_j)\\
 & =\ell(\gamma_0)+\sum_j(\ell(\gamma_j)-(1-c)\ell(\gamma_j))
   \leq \ell(\gamma)-(n-1)(1-c)a,
  \end{align*}
as required.
\end{proof}

\section{Quantitative excluded intervals}
\label{secn:pf_thm_qxi}

Here we prove Theorem \ref{thm:qxi}

\begin{proof} We begin by recalling notation.  Let $s$ be a slope, and
let $t$ be the cusp $t=-1/s$.  Temporarily abusing notation, we set
$s':=\mu_F(s), t':=\sigma_F(t)$ even if $s'=\nonslope$, which
corresponds to $t'=\nonslope$. Thus if $s' \neq \nonslope$ then
$t'=-1/s'$.  We write $h=\height(s)=\height(t)$ and
$\delta(s)=c(s)/d(s)$, the corresponding multiplier. We have
$s'=\nonslope \iff \delta(s)=0$.  The proof of Theorem \ref{thm:qxi}
divides into cases not only along the lines of the two parts of its
conclusion, but also according to whether or not the cusp $t$ falls
into one of a few exceptional cases determined by the condition $t'\in
\{1/0, t, \nonslope\}$.  We will treat these various cases using Lemma
\ref{lemma:easy} and \ref{lemma:hard}. The following table summarizes
the cases and bounds obtained.\\
\begin{center}
\begin{tabular}{ l | l | l }
& $t=1/0$ & $t\neq 1/0$\\ \hline 
  $t' \not\in\{1/0, t,\odot \}$ & Lemma \ref{lemma:easy}, $R\sim C^2$ &
Lemma \ref{lemma:easy}, $\rho \sim C^{-2}h^{-2}$\\
  $t'=1/0$ & Lemma \ref{lemma:hard}, $R \sim C^8$ & Lemma \ref{lemma:easy}, $\rho\sim C^{-2}h^{-2}$ \\
  $t=t'\neq 1/0$ &  not applicable & Lemma \ref{lemma:hard}, $\rho \sim C^{-8}h^{-18}$\\
 $ t'=\nonslope$ & Lemma \ref{lemma:hard}, $R \sim C^8$ & Lemma \ref{lemma:hard}, $\rho \sim C^{-8}h^{-18}$.
\end{tabular}
\end{center}
The implicit constants are universal.

Corollary 7.3 of \cite{P} implies the following theorem.

\begin{thm}\label{thm:fixedpt} Suppose that $s=\frac{p}{q}$ and that
$s'=\frac{p'}{q'}$ with $\gcd(p,q)=\gcd(p',q')=1$.  Then the set of
real numbers $x$ defined by
  \begin{equation}\linnum\label{eqn:fixedpt}
\deg(F)\left|px+q\right|<\left|p'x+q'\right|
  \end{equation}
does not contain a cusp fixed by $\sigma_F$.
\end{thm}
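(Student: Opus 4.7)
The plan is to prove the contrapositive: if $x\in\bbR$ is a cusp fixed by $\sigma_F$, then $|p'x+q'|\le\deg(F)\,|px+q|$. The tool is the Gr\"otzsch--Ahlfors modulus inequality $m(s',\sigma_F(\tau))\ge\delta(s)\,m(s,\tau)$ for the pullback, where $m(s,\tau):=\Im(\tau)/|p\tau+q|^{2}$ is the reciprocal extremal length of the slope-$s$ curve family on the torus of shape $\tau$ and $\delta(s):=c(s)/d(s)$. The key point is that a real fixed cusp $x$ corresponds to a $\mu_F$-fixed slope $s_x:=-1/x$, via the equivalence $\sigma_F(t)=t\iff\mu_F(-1/t)=-1/t$ from the introduction. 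This lets me apply the modulus inequality \emph{twice}, once at the given slope $s$ and once at the fixed slope $s_x$; the two bounds combine to give the conclusion.

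Reducing to the generic case $px+q\ne 0\ne p'x+q'$ (the edge cases are trivial), I would write $\tau=x+iy$ with $y>0$ small and $\sigma_F(\tau)=x+a+iv$ with $v>0$, then apply the modulus inequality at $s_x$. Since $p_x x+q_x=0$, the norms $|p_x\tau+q_x|^{2}=p_x^{2}y^{2}$ and $|p_x\sigma_F(\tau)+q_x|^{2}=p_x^{2}(a^{2}+v^{2})$ collapse nicely and the inequality becomes $vy\ge\delta(s_x)(a^{2}+v^{2})$. Dropping the nonnegative $a^{2}$ term on the right yields the key pointwise bound
\[
v(y) \;\le\; y/\delta(s_x),
\]
which controls the rate at which $\sigma_F(\tau)$ is pulled toward $x$ along the vertical ray.

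Next I would apply the modulus inequality at the given slope $s$, rearrange it as $v\ge\delta(s)\,y\,|p'\sigma_F(\tau)+q'|^{2}/|p\tau+q|^{2}$, and chain against the previous bound to obtain $|p'\sigma_F(\tau)+q'|^{2}\le |p\tau+q|^{2}/(\delta(s)\delta(s_x))$. Taking $y\to 0$ and invoking continuity of $\sigma_F$ at the fixed cusp (Selinger's Weil--Petersson extension) yields $(p'x+q')^{2}\le(px+q)^{2}/(\delta(s)\delta(s_x))$. Since $c(s),c(s_x)\ge 1$ while $d(s),d(s_x)\le\deg(F)$, each multiplier is at least $\deg(F)^{-1}$, so $\delta(s)\delta(s_x)\ge\deg(F)^{-2}$ and the required $|p'x+q'|\le\deg(F)\,|px+q|$ drops out. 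The main obstacle will be the clean handling of the degenerate cases ($x=\infty$, or $x$ coinciding with the cusp of $s$ or $s'$); each is either trivial because one side of the target inequality vanishes, or reduces to the generic argument after a Möbius conjugation normalizing $x$ to a finite, generic position.
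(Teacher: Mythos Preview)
Your argument is correct. The two applications of the Gr\"otzsch inequality---one at the fixed slope $s_x=-1/x$ to obtain $v\le y/\delta(s_x)$, and one at the given slope $s$ to obtain $|p'\sigma_F(\tau)+q'|^2\le (v/y)\,|p\tau+q|^2/\delta(s)$---combine exactly as you say, and the limit $y\to 0$ is justified by Selinger's extension (WP convergence at a finite cusp forces Euclidean convergence, so $|p'\sigma_F(\tau)+q'|\to|p'x+q'|$). The bounds $\delta(s),\delta(s_x)\ge 1/\deg(F)$ hold because both $s'$ and $s_x$ are genuine slopes (not $\odot$), so $c\ge 1$ and $d\le\deg(F)$. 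Your treatment of the edge cases is also fine: if $px+q=0$ and $x$ is fixed, then $s_x=s$, whence $s'=\mu_F(s)=s$ and $p'x+q'=0$ as well; if $p'x+q'=0$ the inequality is vacuous.

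As for comparison: the paper does not actually prove this statement here---it simply invokes Corollary~7.3 of the external reference~\cite{P}. Your proof is therefore more self-contained than what the paper offers, and it uses only ingredients (the Gr\"otzsch inequality $m(s',\sigma_F(\tau))\ge\delta(s)m(s,\tau)$ and Selinger's boundary continuity) that the paper itself deploys elsewhere, e.g.\ in the proof of Theorem~\ref{thm:bounded_displacement}. The ``apply Gr\"otzsch twice'' idea---once to control the vertical contraction at the fixed cusp, once to transfer this to the given slope pair $(s,s')$---is a clean distillation; it is likely close in spirit to the half-space arguments in \cite{cfpp} and \cite{P}, but packaged so that no external reading is required.
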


We use this theorem to prove the following lemmas.

\begin{lemma}[Excluded intervals for nonexceptional
cusps]\label{lemma:easy} Suppose that $t'\notin \{t,\odot \}$ and that
$\frac{u}{v}\in \bbQ$ is a cusp fixed by $\zs_F$.  
\begin{enumerate}
  \item If $t=\frac{1}{0}$, then
$\frac{u}{v}\notin I_t:=(-\infty ,-R)\cup (R,\infty )$, where
$R=100C^2 \sim C^2$.
  \item If $t\ne \frac{1}{0}$, then
$\frac{u}{v}\notin I_t:=(t-\zr,t+\zr)$, where $\zr=100^{-1}C^{-2}h^{-2}\sim C^{-2}h^{-2}$.
\end{enumerate}
\end{lemma}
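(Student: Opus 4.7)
The plan is to apply Theorem~\ref{thm:fixedpt} after converting its slope-based inequality to one in the cusp variable. Writing $t=P/Q$ and $t'=P'/Q'$ in lowest terms with $Q,Q'\ge 0$ (so that $(P,Q)=(1,0)$ represents $1/0$), the slope $s=-1/t$ is carried by the pair $(p,q)=(-Q,P)$, and the linear form $|px+q|$ becomes $|P-Qx|$; similarly for $s'$. Theorem~\ref{thm:fixedpt} then asserts that the set
\[ E:=\{x\in \bbR : \deg(F)\,|P-Qx|<|P'-Q'x|\} \]
contains no cusp fixed by $\zs_F$, and the task reduces to showing $I_t\subseteq E$ in each of the two cases. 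The hypothesis $t'\ne t$ enters exactly once, through the arithmetic lower bound $|t-t'|\ge 1/(QQ')$.

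For part (1), $t=1/0$ gives $Q=0$ and $|P|=1$, while the hypothesis $t'\ne t$ forces $Q'\ge 1$; the condition defining $E$ then becomes $|x-t'|>\deg(F)/Q'$. I would invoke Proposition~\ref{prop:linear_height_distortion} to bound $|t'|\le \height(t')\le 125C$ and use $\deg(F)/Q'\le \deg(F)\le C^2$. For $|x|>R:=100C^2$ the triangle inequality yields $|x-t'|\ge R-125C$, so $I_t\subset E$ once $R-125C>C^2$, which reduces to the mild inequality $C\ge 125/99$. This holds automatically because $C\ge \sqrt 2$ for any non-Euclidean NET diagram, an inequality already used in the proof of Proposition~\ref{prop:linear_height_distortion}.

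For part (2), $t\ne 1/0$, I would split on whether $t'=1/0$. When $t'=1/0$ the defining condition of $E$ reduces to $\deg(F)\,Q\,|x-t|<1$, and the bounds $Q\le h$, $\deg(F)\le C^2$ show that $|x-t|<1/(C^2 h)$ lies in $E$; this interval comfortably contains $I_t$. When $t'$ is finite and distinct from $t$, the arithmetic identity $|t-t'|=|PQ'-P'Q|/(QQ')\ge 1/(QQ')$ (the numerator being a nonzero integer) together with the triangle inequality gives $Q'|x-t'|\ge 1/Q - Q'|x-t|$, so the sufficient condition $|x-t|<1/\bigl(Q(\deg(F)\,Q+Q')\bigr)$ forces $x\in E$. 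Substituting $Q\le h$, $Q'\le 125Ch$ (Proposition~\ref{prop:linear_height_distortion}), and $\deg(F)\le C^2$, the right-hand side is at least $\rho=100^{-1}C^{-2}h^{-2}$, again assuming the harmless $C\ge 125/99$.

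The main obstacle is essentially organizational: after the translation from slopes to cusps the analysis is elementary, but one must split carefully on whether $t$ or $t'$ equals $1/0$, and keep track of the dependence of the constants on $C$ and $h$ across each case. The genuine inputs are two, namely Theorem~\ref{thm:fixedpt}, which produces the forbidden set $E$, and Proposition~\ref{prop:linear_height_distortion}, which controls $|t'|$ and $Q'$ in terms of $C$ and $h$. The bound $|t-t'|\ge 1/(QQ')$ is the source of the $h^{-2}$ factor in $\rho$, and the hypothesis $t'\ne t$ is used only to ensure that the integer numerator $|PQ'-P'Q|$ is nonzero.
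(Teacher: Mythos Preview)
Your argument is correct and follows essentially the same route as the paper: both proofs feed Theorem~\ref{thm:fixedpt} together with the height bound $\height(t')\le 125C\cdot h$ from Proposition~\ref{prop:linear_height_distortion}, and both use the integrality fact $|PQ'-P'Q|\ge 1$ (equivalently $|pq'-qp'|\ge 1$) to produce the $h^{-2}$ factor in~$\rho$. The only organizational difference is that in statement~(2) you split off the subcase $t'=1/0$ and argue with the triangle inequality on $|x-t'|$, whereas the paper inverts the M\"obius map $\varphi(x)=(px+q)/(p'x+q')$ directly and obtains a single estimate that absorbs the $t'=1/0$ case; the resulting numerics and the final constants are the same.
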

  \begin{proof} Suppose that $s=\frac{p}{q}$ and $s'=\frac{p'}{q'}$
with $\gcd(p,q)=\gcd(p',q')=1$.  So $t=-\frac{q}{p}$ and
$t'=-\frac{q'}{p'}$.  Theorem~\ref{thm:fixedpt} states that the set
of real numbers $x$ which satisfy the inequality in line \ref{eqn:fixedpt}
does not contain a cusp fixed by $\zs_F$.

First suppose that $t=\frac{1}{0}$, so that $p=0$, $|q|=1$ and $h=1$.
Then $p'\ne 0$ because $t'\ne t$.  We take $L=125C$ as in
Proposition~\ref{prop:linear_height_distortion}.  We note that
$99C>125$ because $C\ge \sqrt{2}$.  So if $|x|>100C^2$, then
  \begin{equation*}
|p'x+q'|\ge |p'x|-|q'|\ge |x|-|q'|> 100C^2-L= 99C^2+C^2-125C>C^2\ge \deg(F).
  \end{equation*}
This shows that $x$ satisfies line \ref{eqn:fixedpt}, proving statement
1.

Now to prove statement 2, suppose that $t\ne \frac{1}{0}$,
equivalently, $p\ne 0$.  We rewrite line \ref{eqn:fixedpt} as
$|\zv(x)|<\deg(F)^{-1}$, where $\zv(x)=\frac{px+q}{p'x+q'}$.  We want
to choose the real number $\zr$ in the statement of the lemma so that
the interval $(t-\zr,t+\zr)$ lies in the interval about $t$ with
endpoints $\zv^{-1}(\deg(F)^{-1})$ and $\zv^{-1}(-\deg(F)^{-1})$.  It
suffices to choose $\zr$ so that
  \begin{equation*}
\zr\le \min\{|\zv^{-1}(\pm \deg(F)^{-1})-t|\}.
  \end{equation*}
To estimate this minimum, we set $r=\pm \deg(F)^{-1}$.  Then
  \begin{align*}
|\zv^{-1}(r)-t| & =\left|\frac{q'r-q}{-p'r+p}+\frac{q}{p}\right|
=\frac{|pq'-qp'||r|}{|-p'r+p||p|}\ge \frac{1}{|-p'+pr^{-1}||p|}\\
 & \ge \frac{1}{(Lh+hC^2)h}=(L+C^2)^{-1}h^{-2}\ge 100^{-1}C^{-2}h^{-2}.
  \end{align*}

This proves Lemma~\ref{lemma:easy}.

\end{proof}

\begin{lemma}[Excluded intervals for exceptional
cusps]\label{lemma:hard} Suppose that $t'\in \{t,\odot \}$ and if
$t'=t$, then $\zd(s)\ne 1$.  Suppose $\frac{u}{v}\in \bbQ$ is a cusp
fixed by $\zs_F$.  
\begin{enumerate}
  \item If $t=\frac{1}{0}$, then $\frac{u}{v}\notin I_{1/0}:=(-\infty
,-R)\cup (R,\infty )$, where $R\sim C^8$.
  \item If $t\ne \frac{1}{0}$, then $\frac{u}{v}\notin
I_t:=(t-\zr,t)\cup (t,t+\zr)$, where $\zr\sim C^{-8}h^{-18}$.
\end{enumerate}
\end{lemma}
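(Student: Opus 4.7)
My approach is to reduce Lemma~\ref{lemma:hard} to Lemma~\ref{lemma:easy} by applying Theorem~\ref{thm:fixedpt} not to the slope $s=-1/t$ itself (which yields no useful exclusion in the exceptional case) but to an auxiliary non-exceptional slope $s_1$ chosen close to $s$.  When $t'=t$, the direct inequality $\deg(F)|px+q|<|p'x+q'|$ reduces to $(\deg(F)-1)|px+q|<0$, whose solution set is empty; when $t'=\odot$, the inequality is not even defined.  Thus a replacement slope is indispensable.  The plan is to pick $s_1$ so that $s_1':=\mu_F(s_1)\notin\{s_1,\odot\}$, apply Theorem~\ref{thm:fixedpt} to $s_1$ to get a non-empty excluded interval around $t_1=-1/s_1$, and then translate that interval into a \emph{deleted} interval around $t$.

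\medskip

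For $t\ne 1/0$, write $s=p/q$ in lowest terms and let $p_0/q_0$ be a Farey neighbor ($|pq_0-qp_0|=1$).  I would consider the one-parameter family of auxiliary slopes $s_1^{(k)}:=(p+kp_0)/(q+kq_0)$ for small nonzero integers $k$.  Each such $s_1^{(k)}$ has height $\lesssim |k|h$, and by Proposition~\ref{prop:linear_height_distortion} its image satisfies $\height(s_1'^{(k)})\lesssim C|k|h$.  Once a non-exceptional member $s_1=s_1^{(k)}$ is found, the computation in the proof of Lemma~\ref{lemma:easy} yields an excluded interval around $t_1$ of radius $\sim C^{-2}\height(s_1')^{-2}$.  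Since $|t-t_1|=1/(q(q+kq_0))$ is also polynomially controlled, one obtains an excluded deleted interval around $t$ of the required form.  For $t=1/0$ the same scheme is run with slopes of the form $s_1=1/q_1$ with $q_1$ bounded by a polynomial in $C$, producing the bound $R\sim C^8$.

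\medskip

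The main obstacle is guaranteeing the non-exceptional choice of $s_1$ with controlled height; it is here that the hypothesis $\zd(s)\ne 1$ (when $t'=t$) enters, ruling out the Figure~\ref{fig:omit} pathology in which every nearby slope is fixed and no exclusion is possible.  My expectation is that this step will use Theorem~\ref{thm:trivial_or_surjective} (nontrivial $\zm_F$ is surjective on $\overline{\bbQ}$, so the alternative $s_1'=\odot$ can occur only on a small, explicit set of slopes) together with the explicit algorithm of \cite[Section~5]{cfpp} for computing $\zm_F$ on Farey neighbors of $s$, to show that within the family $\{s_1^{(k)}\}$ at least one member with bounded $|k|$ has a non-exceptional image.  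Tracking polynomial losses from Proposition~\ref{prop:linear_height_distortion}, from the $\height^{-2}$ factor in Lemma~\ref{lemma:easy}, and from the necessary separation $|t-t_1|$, will yield the exponents $8$ and $18$; the exponent $18$ seems to require traversing several Farey levels before one finds a non-exceptional neighbor and thus reflects a worst-case auxiliary slope of height $\sim h^9$.
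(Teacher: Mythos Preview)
Your reduction idea is natural, but there is a genuine gap in the case $t=1/0$ that your scheme cannot close.  You need to exclude an \emph{unbounded} set $(-\infty,-R)\cup(R,\infty)$.  Any single application of Theorem~\ref{thm:fixedpt} to a slope $s_1=p_1/q_1$ with $p_1\neq 0$ yields only a bounded excluded interval: as $|x|\to\infty$ the inequality $\deg(F)|p_1x+q_1|<|p_1'x+q_1'|$ fails since $\deg(F)|p_1|\ge|p_1'|$ (by $\height(s_1')\le L\,\height(s_1)$ and $\deg(F)\ge 2$).  So a bounded family ``$q_1$ bounded by a polynomial in $C$'' of finite auxiliary cusps produces a bounded union of bounded intervals, never $(-\infty,-R)\cup(R,\infty)$.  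The only way a single inequality from Theorem~\ref{thm:fixedpt} gives an unbounded set is $p_1=0$, i.e.\ $t_1=1/0$, which is exactly the exceptional case you are trying to avoid.

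The missing mechanism is the Dehn twist functional equation.  If $T$ is the twist about the curve of slope $0$ (cusp $1/0$) with $c=c(0)$, $d=d(0)$, then $T^d\circ F\simeq F\circ T^c$ and hence $\sigma_F(z+2d)=\sigma_F(z)+2c$.  Starting from a single finite cusp $-Q/P$ of height $\le\deg(F)$ with finite image $-Q'/P'$ (Proposition~\ref{prop:simple_input_nontrivial_output}), the translates $-Q/P-2nd$ have images $-Q'/P'-2nc$.  The hypothesis $\delta(0)\ne1$ means $c\ne d$, so these translates are distinct from their images for all but at most one $n$; this is precisely how the assumption enters, and it is what fails in the Figure~\ref{fig:omit} example.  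Applying Theorem~\ref{thm:fixedpt} to each translate, the excluded interval around $-Q/P-2nd$ has radius growing like $|n|/\deg(F)^2$, so for $|n|$ beyond an explicit $N\sim C^6$ the intervals have radius $>d$ and their union covers everything outside $[-R,R]$ with $R\sim C^8$.  Your ``at least one non-exceptional Farey neighbor'' expectation does not supply this infinite overlapping family, and Theorem~\ref{thm:trivial_or_surjective} alone does not control which nearby slopes are fixed or trivialized.

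For statement~2 the paper does not hunt for Farey neighbors either: it conjugates $F$ by $M\in\mathrm{SL}_2(\bbZ)$ sending $t$ to $1/0$ (entries of $M$ bounded by $h$ via a penultimate convergent), obtaining a NET map $G$ with presentation size $\le Ch^2$, applies statement~1 to $G$ to get $R_G\lesssim (Ch^2)^8=C^8h^{16}$, and pulls back by $\sigma_\varphi^{-1}$, picking up two more factors of $h$ to give $\rho\sim C^{-8}h^{-18}$.  So the exponent $18$ is $2\cdot 8+2$ from this conjugation, not from ``traversing several Farey levels''.
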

  \begin{proof} We first prove statement 1.  Suppose that
$t=\frac{1}{0}$.

Proposition~\ref{prop:simple_input_nontrivial_output} implies that
there exists a slope $\frac{P}{Q}$ such that $P\ne 0$, $\gcd(P,Q)=1$,
$\zm_F(\frac{P}{Q})\notin \{\frac{0}{1},\odot \}$ and
$\height(\frac{P}{Q})\le \deg(F)$.  So $-\frac{Q}{P}$ is a finite
cusp such that $\zs_F(-\frac{Q}{P})$ is a finite cusp.  

Now we introduce the stabilizer of the cusp $\frac{1}{0}$.  Let $T$ be
the primitive, positive Dehn twist about the curve $\zg$ of slope
$\frac{0}{1}$.  Suppose that $\zg$ has $c$ essential nonperipheral
preimages, each mapping by degree $d$, so that
$\zd(\frac{0}{1})=\frac{c}{d}$.  Then, up to homotopy,
  \begin{equation*}
T^d\circ F=F\circ T^c.
  \end{equation*}
The map on $\bbH$ induced by $T$ is $z\mapsto z+2$, so
  \begin{equation*}
\zs_F(z+2d)=\zs_F(z)+2c.
  \end{equation*}
Because $\zd(s)\ne 1$ if $t'=t$, it follows that $\zs_F$ maps
$-\frac{Q}{P}-2nd$ to a different finite cusp for all but at most one
integer $n$.

Next let $n\in \bbZ$.  The previous paragraph shows that we may argue
as in the proof of statement 2 of Lemma~\ref{lemma:easy}, replacing
$-\frac{q}{p}$ there by $-\frac{Q}{P}-2nd$ for all but at most one
value of $n$.  Let $\frac{P'}{Q'}=\zm_F(\frac{P}{Q})$ with
$\gcd(P',Q')=1$.  Then $P'\ne 0$ and
$\zs_F(-\frac{Q}{P}-2nd)=-\frac{Q'}{P'}-2nc$.

Our next goal is to find a positive integer $N$ so that if $|n|\ge N$,
then we can find an excluded interval about $-\frac{Q}{P}-2nd$ with
radius greater than $d$.  The union of these intervals over such
sufficiently large integers $n$ will give us $I_{1/0}$.  So let
  \begin{equation*}
\zv(x)=\frac{Px+Q+2ndP}{P'x+Q'+2ncP'}.
  \end{equation*}
Arguing as in the proof of statement 2 of Lemma~\ref{lemma:easy}, we
want
  \begin{equation*}
\min\left\{\left|\zv^{-1}(\pm \deg(F)^{-1})+\frac{Q}{P}+2nd\right|\right\}>d.
  \end{equation*}
Set $r=\pm \deg(F)^{-1}$.  Then assuming that $\zs_F$ does not fix
$-\frac{Q}{P}-2nd$,
  \begin{align*}
\left|\zv^{-1}(r)+\frac{Q}{P}+2nd\right| & 
=\left|\frac{(Q'+2ncP')r-Q-2ndP}{-P'r+P}
+\frac{Q+2ndP}{P}\right|\\
 & =\left|\frac{(PQ'-P'Q+2n(c-d)PP')r}{(-P'r+P)P}\right|\\
 & =
\left|\frac{Q'(P')^{-1}-QP^{-1}+2n(c-d)}{-1+P(P')^{-1}r^{-1}}\right|\\
 & \ge \frac{2|n||c-d|-\left|Q'(P')^{-1}\right|-\left|QP^{-1}\right|}
{|P||P'|^{-1}|r|^{-1}+1}\\
  & > \frac{2|n|-L\deg(F)-\deg(F)}{\deg(F)^2+1}\\
 & > \frac{|n|-(L+1)\deg(F)}{\deg(F)^2+1}.
  \end{align*}
We want this to be at least $d\le \deg(F)\le C^2$.  So we want
  \begin{equation*}
|n|\ge(C^4+1)C^2+(125C+1)C^2.
  \end{equation*}
So setting
  \begin{equation*}
N=(C^4+125C+2)C^2,
  \end{equation*}
if $|n|\ge N$, then we have an excluded interval about
$-\frac{Q}{P}-2nd$ with radius greater than $d$.  Note that if $\zs_F$
fixes $-\frac{Q}{P}-2nd$, then the right side of the large display
above is 0.  Since it is not 0, $\zs_F$ does not fix
$-\frac{Q}{P}-2nd$ if $|n|\ge N$.  So the union of these excluded
intervals contains $I_{1/0}$ by taking
  \begin{equation*}
R=\max\left\{\left|\frac{Q}{P}+2Nd\right|,\left|\frac{Q}{P}-2Nd\right|\right\}
\le \left|\frac{Q}{P}\right|+2Nd
 \le C^2+2NC^2\sim C^8.
  \end{equation*}
This proves statement 1.

Now we consider statement 2.  The strategy of this proof is to
conjugate $F$ so that $t$ moves to $\frac{1}{0}$ in order to reduce to
statement 1.  For this we construct a homeomorphism $\zv\co (S^2,P)\to
(S^2,P)$ induced by an element $M$ of $\text{SL}(2,\bbZ)$, viewing
$\text{PSL}(2,\bbZ)$ as a subgroup of the modular group of $F$.  We
choose the first column of $M$ to consist of $q$ and $p$.  To define
the second column of $M$, first suppose that $\frac{q}{p}\notin \bbZ$.
We express $\frac{q}{p}$ as a regular continued fraction.  The last
convergent of this continued fraction is $\frac{q}{p}$.  Let
$\frac{b}{a}$ be the previous convergent for integers $a$, $b$ with
$\gcd(a,b)=1$.  We temporarily choose the second column of $M$ to
consist of $b$ and $a$.  Then $\text{det}(M)=\pm 1$, as in statement 2
of Lemma~\ref{lemma:cfracs}.  We multiply the second column of $M$ by
$-1$ if necessary so that $\text{det}(M)=1$.  If $\frac{q}{p}\in
\bbZ$, then we simply take $b=\pm 1$ and $a=0$.  Let
$M=\left[\begin{smallmatrix}q & b \\ p & a
\end{smallmatrix}\right]\in \text{SL}(2,\bbZ)$.  The map on the square 
pillowcase induced by $M$ is $\zv$.

Now we set $G=\zv^{-1}\circ F\circ \zv$, a NET map.  The NET map $F$
is given by a diagram with size $C$.  Let $A$ be the defining matrix
for this diagram.  Then, as in Section 6 of \cite{fpp1}, a defining
matrix for $G$ is $M^{-1}AM$. We have that $\height(\frac{p}{q})=h$.
Since heights of successive convergents increase,
$\height(\frac{a}{b})\le h$.  It follows that there exists a
presentation diagram for $G$ with size at most $Ch^2$.

We are prepared to consider pullback maps on $\bbH$.  Let $\zs_F$,
$\zs_G$ and $\zs_\zv$ be the pullback maps of $F$, $G$ and $\zv$.
Then $\zs_G=\zs_\zv\circ \zs_F\circ \zs_\zv^{-1}$.  As in Section 6 of
\cite{fkklpps} or Proposition 4.1 of \cite{fpp2}, we have that 
  \begin{equation*}
\zs_\zv(z)=\frac{az+b}{pz+q}.
  \end{equation*}
So $\zs_\zv(-\frac{q}{p})=\frac{1}{0}$.  Hence $\zs_\zv$ transforms an
excluded interval for $F$ about $t$ to an excluded interval for $G$
about $\frac{1}{0}$.  Statement 1 applied to $G$ yields a value of
$R\lesssim C^8h^{16}$.  It suffices to choose $\zr$ so that it
is no larger than the distances from $\zs_\zv^{-1}(\pm R)$ to
$-\frac{q}{p}$.  We compute:
  \begin{align*}
\left|\zs_\zv^{-1}(\pm R)-(-\frac{q}{p})\right| 
  & =\left|\frac{q(\pm R)-b}{-p(\pm R)+a}+\frac{q}{p}\right| 
   =\frac{|qa-pb|}{|-p(\pm R)+a||p|}=\frac{1}{|-p(\pm R)+a||p|}\\
  & \ge \frac{1}{(|pR|+|a|)|p|}
  \gtrsim \frac{1}{(hC^8h^{16}+h)h}\sim C^{-8}h^{-18}
  \end{align*}

This proves statement 2.

\end{proof}
Lemmas \ref{lemma:easy} and \ref{lemma:hard} imply Theorem \ref{thm:qxi}. 
\end{proof}

\section{A sequence of NET maps with large obstruction slope heights }
\label{sec:largeratio}\nosubsections

This section is devoted to proving statement 2 of Theorem~\ref{thm:finite}.  

We begin by mentioning that since translations act trivially on
slopes, the choice of translation term--the vector circled in the
presentation diagram--does not affect the slope function or multiplier
function of a NET map $F$. A \emph{virtual} NET map presentation
diagram is such a diagram without such a choice of translation term.
Thus each virtual presentation diagram corresponds to a priori at most
four NET maps: we circle either 0, $\zl_1$, $\zl_2$ or $\zl_1+\zl_2$
in the diagram.  In low degree cases, such as the one we discuss here,
the number of corresponding NET maps might be less than four, since
some choices of translation term may yield maps with fewer than four
postcritical points, as required in the definition of NET map.  Thus
the set of non-Euclidean NET maps produced by a virtual NET map
presentation diagram consists either entirely of unobstructed maps, or
entirely of obstructed maps, each with a common obstruction.

We will construct an infinite sequence of degree 2 NET map virtual
presentation diagrams $\bD_n$ for integers $n\ge 0$.  It will be clear
from the construction that $n\le \text{geomsize}(\bD_n)\le n+2$.  We
will show that the non-Euclidean NET maps defined by $\bD_n$ with
$n\equiv 2\text{ mod } 3$ are obstructed.   Let $s_n$ be the slope of
the unique obstruction for such a diagram $\bD_n$ with respect to the
marking of $S^2$ determined by $\bD_n$.  We will then show that
$\height(s_n)$ grows roughly as $n^{\log_2(n)/2}$.  So the height of
the slope of the obstruction of $\bD_n$ grows faster than every
polynomial in the geometric size of $\bD_n$.

We will actually obtain a bit more.  The diagram $\bD_n$ is only a
virtual NET map presentation diagram because no element of $\zL_1$ is
circled.  We will find that circling three of the elements 0, $\zl_1$,
$\zl_2$ or $\zl_1+\zl_2$ obtains NET maps.  The fourth Thurston map
has only three postcritical points.  The dynamic portraits of these
three NET maps--invariants describing the dynamics and local degrees
on the set of critical points and their forward orbits--are mutually
inequivalent, so these three NET maps are mutually inequivalent.  In
Theorem~\ref{thm:conjugation2} we will show for $n\equiv 2\text{ mod
}3$ that the three equivalence classes of NET maps arising from
$\bD_n$ are independent of $n$.  

{\bf Remark.} Our analysis also shows that for each $n \geq 0$ with
$n\equiv 0,1\text{ mod }3$ that each NET map $F_n$ arising from
diagram $\bD_n$ is equivalent to one arising from $\bD_0$.  The computer program {\tt NETmap} says that each NET map in
$\bD_0$ is equivalent to a rational map.

\textbf{Definition of $\bD_n$.}  Let $n$ be a nonnegative integer.  We
define the virtual NET map presentation diagram $\bD_n$ so that
$\zl_1=(n,n+1)$ and $\zl_2=(n-2,n-1)$.  One verifies that the
determinant of the $2\times 2$ matrix whose columns are $\zl_1$ and
$\zl_2$ is 2 and that the parallelogram whose vertices are 0,
$2\zl_1$, $\zl_2$ and $2\zl_1+\zl_2$ contains $(n-1,n)$.  We choose
green line segments in this parallelogram so that only one is
nontrivial and its endpoints are $\zl_1$ and $(n-1,n)$.  This defines
$\bD_n$.  Clearly $n\le \text{geomsize}(\bD_n)\le n+2$.
Figure~\ref{fig:prendgm} shows $\bD_0$ and $\bD_2$.

  \begin{figure}
\centerline{\includegraphics{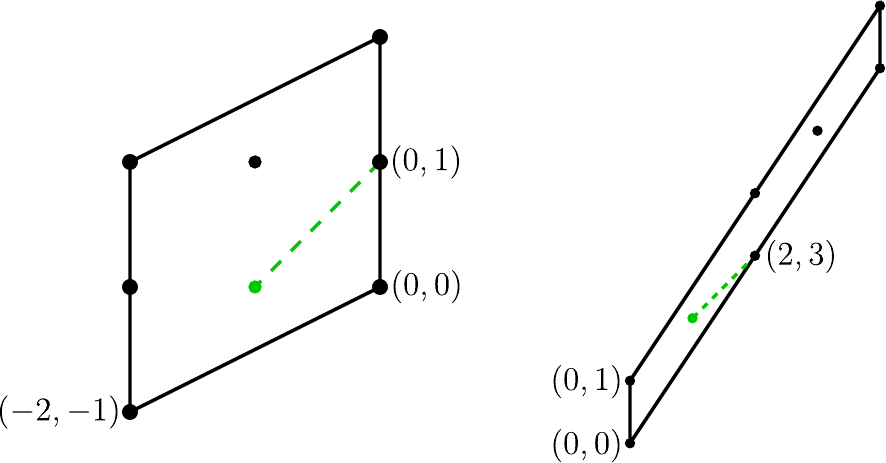}}
\caption{Virtual presentation diagrams $\bD_0$ at left and $\bD_2=M_2.\bD_0$ at right.}
\label{fig:prendgm}
  \end{figure}

\textbf{Twist relationship.}  There is a natural action of
$\text{SL}_2(\bbZ)$ on diagrams: just apply the linear map induced by
the matrix to the diagram; see \cite{fpp1}.  Here we find $M_n\in
\text{SL}(2,\bbZ)$ for which $\mathbf{D}_n=M_n.\mathbf{D}_0$.

One easily verifies that  
  \begin{equation*}
\left[\begin{matrix}-n+1 & n \\ -n & n+1 \end{matrix}\right]
\left[\begin{matrix}0 & -2 & -1 \\ 1 & -1 & 0 \end{matrix}\right]=
\left[\begin{matrix}n & n-2 & n-1 \\n+1  & n-1 & n
\end{matrix}\right].
  \end{equation*}
As in Theorem 1.2 of \cite{fpp1}, this means that $\bD_n=M_n.\mathbf{D}_0$ where 
  \begin{align*}\linnum\label{line:twistmatrix}
M_n:=&\left[\begin{matrix}-n+1 & n \\ -n & n+1 \end{matrix}\right]=
\left[\begin{matrix}1 & 0 \\ 0 & 1 \end{matrix}\right]+n
\left[\begin{matrix}-1 & 1 \\ -1 & 1 \end{matrix}\right]\\
=&\left(\left[\begin{matrix}1 & 0 \\ 0 & 1 \end{matrix}\right]+
\left[\begin{matrix}-1 & 1 \\ -1 & 1 \end{matrix}\right]\right)^n=
\left[\begin{matrix}0 & 1 \\ -1 & 2 \end{matrix}\right]^n.
  \end{align*}

\textbf{Dynamic portraits.}  We compute dynamic portraits of NET maps
as in Section 10 of \cite{fpp2}. Postcritical points which are not
critical are images in $S^2$ of 0, $\zl_1$, $\zl_2$ and $\zl_1+\zl_2$.
Let $\overline{0}$, $\overline{\zl_1}$, $\overline{\zl_2}$ and
$\overline{\zl_1+\zl_2}$ denote these images in $S^2$.  Let $\za$ and
$\zb$ denote the two critical points with $\zb$ postcritical.
Straightforward computations yield the following dynamic portraits for
$\bD_n$, identified by the parity of $n$ and which lattice point of
$\bD_n$ is circled.
\medskip

$n$ even:\qquad
\parbox{.7\linewidth}{
circle 0 (not NET): $\xymatrix{
\za \ar[r]^(.35){2} & \overline{\zl_1+\zl_2} \ar[r]^(.63){1} & \overline{0} 
  \ar@(dr,ur)[]_1 & &\zb \ar@(dr,ur)[]_2}$\\
circle $\zl_1$: $\xymatrix{\za \ar[r]^(.45)2 & \overline{\zl_2} 
\ar[r]^(.35){1} & \overline{\zl_1+\zl_2} \ar[r]^(.62){1} & 
\zb \ar@/_/[r]_2 & \overline{0} \ar@/_/[l]_1}$\\
circle $\zl_2$: $\xymatrix{\za \ar[r]^2 & \zb 
\ar[r]^(.35){2} & \overline{\zl_1+\zl_2} \ar[r]^(.62){1} & 
\overline{\zl_2} \ar@/_/[r]_(.5){1} & \overline{0} \ar@/_/[l]_1}$\\
circle $\zl_2+\zl_2$: $\xymatrix{\za \ar[r]^2 & \overline{0} 
\ar[rr]^(.3){1} & & *[l]{\hspace*{.5em}\overline{\zl_1+\zl_2}\text{
}}\ar@(dr,ur)[]_1 & \zb \ar@/_/[r]_2 & \overline{\zl_2}
\ar@/_/[l]_1}$}
\medskip

$n$ odd:\qquad
\parbox{.7\linewidth}{
circle 0: $\xymatrix{
\za \ar[r]^(.35){2} & \overline{\zl_1+\zl_2} \ar[r]^(.63){1} & \overline{0} 
  \ar@(dr,ur)[]_1 & & \zb \ar@/_/[r]_2 & \overline{\zl_2}\ar@/_/[l]_1}$\\
circle $\zl_1$: $\xymatrix{\za \ar[r]^(.45)2 & \overline{\zl_2} 
\ar[r]^(.5){1} &  \overline{0}\ar[r]^(.5){1} & 
\zb \ar@/_/[r]_2 & *[r]{\;\overline{\zl_1+\zl_2}} \ar@/_/[l]_1}$\\
circle $\zl_2$: $\xymatrix{\za \ar[r]^(.45)2 & \zb
\ar[r]^(.5){2} &  \overline{0}\ar[r]^(.5){1} & 
\overline{\zl_2} \ar@/_/[r]_1 & *[r]{\;\overline{\zl_1+\zl_2}} \ar@/_/[l]_1}$\\
circle $\zl_2+\zl_2$ (not NET): $\xymatrix{\za \ar[r]^2 & \overline{0} 
\ar[rr]^(.3){1} & & *[l]{\hspace*{.5em}\overline{\zl_1+\zl_2}\text{
}}\ar@(dr,ur)[]_1 & \zb \ar@(dr,ur)[]_2}$}
\medskip

We find that the four portraits for even integers $n$ are isomorphic
to the four portraits for odd integers $n$.  One of these four
portraits has only three postcritical points, and so the corresponding
Thurston map is not a NET map.  The other three Thurston maps are NET
maps whose dynamic portraits are mutually inequivalent, so these three
NET maps are mutually inequivalent.

\textbf{Modular group liftables associated to $\bD_0$.} 
For convenience, let us here set
  \begin{equation*}
G= \text{PSL}(2,\bbZ)\ltimes (\bbZ/2\bbZ)^2,
  \end{equation*}
viewed as the modular group of the square pillowcase.
Let $G_0$ denote the subgroup of liftables for $\bD_0$ in $G$, defined as the subgroup of elements $g \in G$ which lift under some (equivalently, any) NET map $F_0$ defined by the virtual diagram $\bD_0$ to another element $\widetilde{g} \in G$.  Generally, $g \mapsto \widetilde{g}$ is multivalued, due to the possible presence of deck transformations. But using
Proposition 3.5 of \cite{fpp2}, one verifies that the modular group
virtual multi-endomorphism associated to $\bD_0$ is actually an
endomorphism, that is, it is single valued.  Given $\zv\in G_0$, let
$\widetilde{\zv}$ denote its image under this modular group virtual
endomorphism. 

We define two affine maps $\zJ_0,\zJ_\infty \co \bbR^2\to \bbR^2$ so
that
  \begin{equation*}
\zJ_0(x)=
\left[\begin{matrix}1 & 2 \\ 0 & 1 \end{matrix}\right]x+
\left[\begin{matrix}2\\ 0\end{matrix}\right] \quad\text{ and }\quad
\zJ_\infty (x)=
\left[\begin{matrix}1 & 0 \\ -2 & 1 \end{matrix}\right]x.
  \end{equation*}
As in Proposition 3.2 of \cite{fpp2}, one verifies that
$\zJ_0$ and $\zJ_\infty $ determine two elements of $G_0$.  The images
of $\zJ_0$ and $\zJ_\infty $ in $\text{PSL}(2,\bbZ)$ generate
$\overline{\zG}(2)$, the subgroup arising from matrices congruent to the
identity modulo 2.

Now we consider the action of $G$ on slopes.  As in Section 6 of
\cite{fkklpps}, if $\zv\in G$ is represented by
$(\left[\begin{smallmatrix}a & b \\ c & d
\end{smallmatrix}\right],v)\in \text{SL}(2,\bbZ)\ltimes
(\bbZ/2\bbZ)^2$, then the pullback map $\zm_\zv$ induced by $\zv$ on
slopes of simple closed curves is represented by
$\left[\begin{smallmatrix}a & -c \\ -b & d
\end{smallmatrix}\right]$.  This and the previous two paragraphs imply
that the set of maps of the form $\zm_{\zv}$ for $\zv\in G_0$ contains
$\overline{\zG}(2)$.  

Continuing with slopes, let $\zm_0\co \overline{\bbQ}\to
\overline{\bbQ}\cup \{\odot \}$ be the slope function associated to
$\bD_0$.  Then, as usual, $\zm_0\circ \zm_\zv=\zm_{\widetilde{\zv}}
\circ \zm_0$ for every $\zv\in G_0$.  This leads us to define a group
homomorphism from $\overline{\zG}(2)$ to $\text{PSL}(2,\bbZ)$ so that
if $\zv\in G_0$, then this group homomorphism maps $\zh=\zm_\zv$ to
$\widehat{\zh}=\zm_{\widetilde{\zv}}$.  It follows that $\zm_0\circ
\zh=\widehat{\zh}\circ \zm_0$ for every $\zh\in \overline{\zG}(2)$.

We next define five elements $\zv_1$, $\zv_{-1}$, $\zv_0$, $\zv_\infty
$, $\zi$ in the $\text{PSL}(2,\bbZ)$-factor of $G$ by way of matrices
as follows:
  \begin{equation*}
\zv_1\leftrightarrow\left[\begin{matrix}0 & 1 \\ -1 & 2
\end{matrix}\right],\;
\zv_{-1}\leftrightarrow\left[\begin{matrix}2 & 1 \\ -1 & 0
\end{matrix}\right],\;
\zv_0\leftrightarrow\left[\begin{matrix}1 & 1 \\ 0 & 1
\end{matrix}\right],\;
\zv_\infty \leftrightarrow\left[\begin{matrix}1 & 0 \\ -1 & 1
\end{matrix}\right],\;
\zi\leftrightarrow\left[\begin{matrix}0 & -1 \\ 1 & 0
\end{matrix}\right].
  \end{equation*}
The first four are parabolic.  Their index is the slope which they fix.
Two elements in $\overline{\zG}(2)$ are
  \begin{equation*}
\zh_1=\zm_{\zv_1^2}\leftrightarrow\left[\begin{matrix}-1 & 2 \\ -2 & 3
\end{matrix}\right]\quad\text{ and }\quad \zh_0=\zm_{\zv_0 ^2} 
\leftrightarrow\left[\begin{matrix}1 & 0 \\ -2 & 1
\end{matrix}\right].
  \end{equation*}
We wish to compute $\widehat{\zh}_1$ and $\widehat{\zh}_0 $.  This
can be done as in Section 6 of \cite{fkklpps} by using functional
equations to compute two boundary values at the rationals, since this
suffices to determine an element of $\text{PSL}(2,\bbZ)$.  This computation
requires knowledge of some values of $\zm_0$.  We compute:
  \begin{equation*}
\zm_0\left(\frac{1}{0}\right)=\frac{0}{1},\quad
\zm_0\left(\frac{1}{1}\right)=\frac{-1}{1},\quad
\zm_0\left(\frac{-1}{1}\right)=\frac{1}{1},\quad
\zm_0\left(\frac{1}{2}\right)=\frac{1}{0}.
  \end{equation*}

This leads to the following two commutative diagrams of maps of pairs.
\begin{equation*}
\begin{CD}
\frac{1}{0}\;\frac{1}{1}@>\zm_0>>\frac{0}{1}\;\frac{-1}{1}\\
@V \zh_1 VV @VV \widehat{\zh}_1 V\\
\frac{1}{2}\;\frac{1}{1}@>\zm_0>>\frac{1}{0}\;\frac{-1}{1}
\end{CD}\qquad\qquad
\begin{CD}
\frac{1}{1}\;\frac{1}{2}@>\zm_0>>\frac{-1}{1}\;\frac{1}{0}\\
@V \zh_0 VV @VV \widehat{\zh}_0 V\\
\frac{-1}{1}\;\frac{1}{0}@>\zm_0>>\frac{1}{1}\;\frac{0}{-1}
\end{CD}
  \end{equation*}
We conclude that
  \begin{equation*}
\widehat{\zh}_1\leftrightarrow
\left[\begin{matrix}1 & -1 \\ 0 & 1 \end{matrix}\right]
\left[\begin{matrix}0 & -1 \\ 1 & 1 \end{matrix}\right]^{-1}=
\left[\begin{matrix}1 & -1 \\ 0 & 1 \end{matrix}\right]
\left[\begin{matrix}1 & 1 \\ -1 & 0 \end{matrix}\right]=
\left[\begin{matrix}2 & 1 \\ -1 & 0 \end{matrix}\right]
\leftrightarrow\zm_{\zv_{-1}}
  \end{equation*}
and
  \begin{equation*}
\widehat{\zh}_0\leftrightarrow
\left[\begin{matrix}1 & 0 \\ 1 & -1 \end{matrix}\right]
\left[\begin{matrix}-1 & 1 \\ 1 & 0 \end{matrix}\right]^{-1}=
\left[\begin{matrix}1 & 0 \\ 1 & -1 \end{matrix}\right]
\left[\begin{matrix}0 & 1 \\ 1 & 1 \end{matrix}\right]=
\left[\begin{matrix}0 & 1 \\ -1 & 0 \end{matrix}\right]
\leftrightarrow\zm_\zi.
  \end{equation*}

\textbf{Translation equivalence.} We say that NET maps $F$ and $F'$ on
the square pillowcase are translation equivalent if and only if there
exists a translation $\zt=(1,v)\in G$  such that $\zt\circ F$ and $F'$
are isotopic.  This is an equivalence relation because the set of
translations is a subgroup of the modular group.  We write $F\approx
F'$ to signify that $F$ and $F'$ are translation equivalent.  Changing
the circled lattice point in a NET map presentation diagram changes
the original NET map to one which is translation equivalent to it.

Because the set of translations is a normal subgroup of the modular
group, both twisting and conjugating by an element of the modular
group respects translation equivalence.  To be specific, let $F$ and
$F'$ be NET maps on the square pillowcase, and let $\zj\in G$.  If
$F'=\zj^{-1}\circ F\circ \zj$, then conjugation by $\zj$ maps the set
of NET maps translation equivalent to $F$ to the set of NET maps
translation equivalent to $F'$.  Similarly, if $F'=\zj\circ F$, then
postcomposition by $\zj$ maps the set of NET maps translation
equivalent to $F$ to the set of NET maps translation equivalent to
$F'$.

Let $F_0$ be a NET map arising from $\bD_0$ (by circling either $\zl_1$,
$\zl_2$ or $\zl_1+\zl_2$).  The last two displays imply that
  \begin{equation*}
\zv_1^2\circ F_0\approx F_0\circ \zv_{-1}\quad\text{ and }\quad
\zv_0^2\circ F_0\approx F_0\circ \zi.
  \end{equation*}

\textbf{Few equivalence classes.} The following theorem implies that
the presentation diagrams $\bD_n$ produce very few equivalence classes
of NET maps.  Theorem~\ref{thm:conjugation2} makes this precise for
the case in which $n\equiv 2\text{ mod }3$.  To simplify notation, we
use juxtaposition of functions to signify composition.

\begin{thm}\label{thm:conjugation1} For an integer $m \geq 0$ let $F_m$ be a NET map arising from $\bD_m$ and let $\zj_m=\zv_1^m \zi$.
Then $\zj_m^{-1}F_m \zj_m \approx
F_{2m+1}$ and $\zj_m^{-1}F_{m+1}\zj_m \approx F_{2m}$.
\end{thm}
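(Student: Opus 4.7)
The plan is to reduce both claimed translation equivalences to a single one-line identity in $\mathrm{PSL}_2(\bbZ)$, exploiting the two ``twisting'' relations
\[
\zv_1^2 F_0\approx F_0\zv_{-1},\qquad \zv_0^2 F_0\approx F_0\zi
\]
established just before the theorem, together with the consequence $F_n\approx \zv_1^n F_0$ of $\bD_n=M_n\cdot\bD_0$ with $M_n=\zv_1^n$.

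First I would substitute $F_m\approx \zv_1^m F_0$ and $F_{m+1}\approx \zv_1^{m+1}F_0$ into the left-hand sides and collect powers of $\zv_1$, obtaining $\zi^{-1}F_0\zv_1^m\zi$ and $\zi^{-1}\zv_1 F_0\zv_1^m\zi$ respectively. The conjugation identity $\zi\zv_1\zi^{-1}=\zv_{-1}$ together with $\zi^2=I$ in $\mathrm{PSL}_2(\bbZ)$ gives $\zv_1^m\zi=\zi\zv_{-1}^m$, so these become $(\zi^{-1}F_0\zi)\zv_{-1}^m$ and $(\zi^{-1}\zv_1F_0\zi)\zv_{-1}^m$. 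The target right-hand sides $\zv_1^{2m+1}F_0$ and $\zv_1^{2m}F_0$ can also be put in the form $\zv_1 F_0\zv_{-1}^m$ and $F_0\zv_{-1}^m$ by iterating the first twisting relation. Canceling the common right factor $\zv_{-1}^m$, both claims reduce to the $m$-independent identities
\[
\zi^{-1}F_0\zi\approx \zv_1 F_0,\qquad \zi^{-1}\zv_1 F_0\zi\approx F_0.
\]

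Next, applying $F_0\zi\approx \zv_0^2F_0$ to slide $\zi$ past $F_0$ on the left sides, the whole argument reduces to the single $\mathrm{PSL}_2(\bbZ)$ computation
\[
\zv_1\zv_0^2 = \left[\begin{matrix}0 & 1\\ -1 & 2\end{matrix}\right]\left[\begin{matrix}1 & 2\\ 0 & 1\end{matrix}\right] = \left[\begin{matrix}0 & 1\\ -1 & 0\end{matrix}\right] = \zi^{-1}.
\]
For the second identity the prefix becomes $\zi^{-1}\zv_1\zv_0^2=\zi^{-2}=I$, and we are done. For the first the prefix becomes $\zi^{-1}\zv_0^2=\zv_1\zv_0^4$, which I would reduce to $\zv_1$ using the auxiliary fact $\zv_0^4 F_0\approx F_0$ (obtained by applying $\zv_0^2 F_0\approx F_0\zi$ twice and invoking $\zi^2=I$).

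The only subtlety is to verify that every rearrangement -- including the cancellation of $\zv_{-1}^m$ on both sides -- is a legal translation-equivalence move. This is guaranteed by the normality of the translation subgroup in the modular group, noted just above the theorem. Beyond this routine bookkeeping, the entire algebraic content is the identity $\zv_1\zv_0^2=\zi^{-1}$, so I do not anticipate any substantial obstacle.
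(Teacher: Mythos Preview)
Your proposal is correct and uses essentially the same ingredients as the paper: the twist relation $F_m\approx \zv_1^m F_0$, the two liftable relations $\zv_1^2 F_0\approx F_0\zv_{-1}$ and $\zv_0^2 F_0\approx F_0\zi$, and the group identities $\zi^2=1$, $\zi\zv_1\zi=\zv_{-1}$, $\zv_1\zv_0^2=\zi$ (the paper records the last as $\zi\zv_0^{-2}=\zv_1$). The only difference is organizational: you first cancel the common right factor $\zv_{-1}^m$ to reduce both statements to the $m$-independent identities $\zi F_0\zi\approx \zv_1 F_0$ and $\zi\zv_1 F_0\zi\approx F_0$, whereas the paper carries the $m$-dependence through the computation directly (and for the second claim rewrites $\zj_m=\zv_1^{m+1}\zv_0^2$ rather than $\zv_1^m\zi$). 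Your route requires the small extra observation $\zv_0^4 F_0\approx F_0$, but this is immediate from applying the second liftable relation twice, exactly as you indicate.
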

  \begin{proof} The twist relationship between $\bD_0$ and $\bD_m$
implies that $F_m\approx \zv_1^mF_0$.  One verifies the following
identities.
  \begin{equation*}
\zi^2=1\quad \zi \zv_1\zi=\zv_{-1}\quad \zi \zv_0^{-2}=\zv_1
\quad \zi \zv_0^2=\zv_{-1}^{-1}
  \end{equation*}
Using this, the display immediately preceding this theorem and the
fact that conjugating and twisting respect translation equivalence,
the first assertion can be proved as follows.
  \begin{align*}
\zj_m^{-1}F_m \zj_m & \approx (\zv_1^m \zi )^{-1}(\zv_1^mF_0)(\zv_1^m \zi)=\zi
F_0\zi(\zi \zv _1^m \zi)=\zi F_0\zi^{-1} \zv_{-1}^m\\
 & \approx \zi
\zv_0^{-2}\zv_1^{2m}F_0=\zv_1\zv_1^{2m}F_0=\zv_1^{2m+1}F_0\approx F_{2m+1}
  \end{align*}
The second assertion can be proved similarly after observing that
$\zj_m=\zv_1^{m+1}\zv_0^2$.
  \begin{align*}
\zj_m^{-1}F_{m+1}\zj_m & \approx
(\zv_1^{m+1}\zv_0^2)^{-1}(\zv_1^{m+1}F_0)(\zv_1^{m+1}\zv_0^2)=
\zv_0^{-2}F_0\zi(\zi \zv_1^{m+1}\zi)(\zi \zv_0^2)\\
 & \approx \zv_0^{-2}F_0\zi
\zv_{-1}^{m+1}\zv_{-1}^{-1}\approx \zv_0^{-2}\zv_0^2
\zv_1^{2m}F_0=\zv_1^{2m}F_0\approx F_{2m}
  \end{align*}

\end{proof}

We are going to analyze the sequence of sets of NET maps arising from diagrams $\mathbf{D}_n$ with $n \equiv 2\text{ mod }3$. In order to apply the previous Theorem \ref{thm:conjugation1}, there will be two cases depending on whether $n$ is even or odd.

\begin{thm}\label{thm:conjugation2} Let $n$ be a positive integer such
that $n\equiv 2\text{ mod }3$.  Then every NET map arising from
$\bD_n$ is conjugate to a NET map arising from $\bD_2$.
\end{thm}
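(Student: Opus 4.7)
The plan is a strong induction on $n$, restricted to positive integers with $n \equiv 2 \pmod{3}$; the base case $n = 2$ is immediate. Given $F$ any NET map arising from $\bD_n$ with $n > 2$, I will use Theorem~\ref{thm:conjugation1} to reduce to a NET map arising from $\bD_{n'}$ with $n' < n$ and $n' \equiv 2 \pmod{3}$, so that the inductive hypothesis applies.

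The descent splits by the parity of $n$. If $n = 2m+1$ is odd, the congruence $n \equiv 2 \pmod{3}$ forces $m \equiv 2 \pmod{3}$, and I set $n' := m$; then $n' \equiv 2 \pmod{3}$ and $n' < n$. The first assertion of Theorem~\ref{thm:conjugation1} supplies a translation equivalence $\zv_m^{-1} F_m \zv_m \approx F$ for some NET map $F_m$ of $\bD_m$. If $n = 2m$ is even, then $m \equiv 1 \pmod{3}$, and I set $n' := m+1$; then $n' \equiv 2 \pmod{3}$ and $n' < n$ (the inequality using $n > 2$, which in the even case forces $n \geq 8$ and so $m \geq 4$). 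The second assertion of Theorem~\ref{thm:conjugation1} plays the analogous role, giving a translation equivalence $\zv_m^{-1} F_{m+1} \zv_m \approx F$. A quick check of these modular arithmetic identities confirms that the chain of reductions terminates at $n'=2$.

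The main obstacle is to upgrade the translation equivalence output of Theorem~\ref{thm:conjugation1} to the honest Thurston equivalence the theorem requires. Writing $F_{n'}$ for $F_m$ or $F_{m+1}$ as above, the relation $\zv_m^{-1} F_{n'} \zv_m \approx F$ unpacks to the existence of a translation $\zt$ such that $\zt \circ \zv_m^{-1} F_{n'} \zv_m$ is isotopic to $F$. Normality of the translation subgroup in the modular group lets this be rewritten as $F \simeq \zv_m^{-1}(\tilde{\zt} \circ F_{n'}) \zv_m$, where $\tilde{\zt} := \zv_m \zt \zv_m^{-1}$ is again a translation; hence $F$ is Thurston equivalent, via conjugation by $\zv_m^{-1}$, to the Thurston map $\tilde{\zt} \circ F_{n'}$. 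The NET maps arising from a single diagram $\bD_{n'}$ are precisely the post-translates of any one of them that retain the full four-point postcritical set (the remaining translate is the non-NET Thurston map visible in the dynamic portrait lists in the excerpt). Since $F$ is a NET map, its isotopic representative $\zv_m^{-1}(\tilde{\zt}\circ F_{n'})\zv_m$ is one as well, forcing $\tilde{\zt} \circ F_{n'}$ to be one of the three NET maps arising from $\bD_{n'}$. Applying the inductive hypothesis to that NET map concludes the argument. The technical heart is thus the bookkeeping in this paragraph: Theorem~\ref{thm:conjugation1} is a statement about translation equivalence classes (one per $\bD_n$), while the conclusion sought is about individual Thurston equivalence classes (three per $\bD_n$), and normality of translations together with the identification of the three NET maps of $\bD_{n'}$ as its postcritically good post-translates is what bridges the gap.
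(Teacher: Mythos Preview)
Your proof is correct and follows essentially the same inductive descent as the paper's own argument: split on parity, observe that the halved index stays in the residue class $2 \pmod 3$, and invoke Theorem~\ref{thm:conjugation1}. One notational slip: you write $\zv_m$ throughout where you mean $\zj_m = \zv_1^m\zi$; the symbol $\zv_m$ is not defined. Your explicit discussion of why translation equivalence suffices---normality of the translation subgroup, together with the fact that conjugation by a modular group element preserves the postcritical-point count and hence carries NET maps to NET maps---is a point the paper's proof leaves implicit, so your extra paragraph is a genuine clarification rather than a detour.
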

  \begin{proof} Suppose $n\equiv 2\text{ mod }3$ and $F_n$ is a NET map arising from $\mathbf{D}_n$.  Suppose first $n$ is odd. Write $n=2m+1$ and note that also $m\equiv 2\text{ mod }3$. Theorem \ref{thm:conjugation1} implies $F_n$ is equivalent to a map $F_m$ produced from $\mathbf{D}_m=\mathbf{D}_{\frac{n-1}{2}}$.  If $n=2m$ is even then $m+1\equiv 2\text{ mod }3$ and similarly $F_n$ is equivalent to a map  $F_{m+1}$ produced from $\mathbf{D}_{m+1}=\mathbf{D}_{\frac{n}{2}+1}$. Induction yields the result. 
%
%
\end{proof}

\textbf{The obstruction theorem.} Here is the main result of this
section.

\begin{thm}\label{thm:obstrn} Let $n$ be a nonnegative integer with
$n\equiv 2\text{ mod } 3$.  Then every NET map arising from $\bD_n$ is
obstructed.  Let $s_n$ be the slope of this obstruction.  For a nonnegative integer $m$ let $\zj_m=\zv_1^m \zi$ and 
$\zn_m=\zm_{\zj_m}$.
Then:
\begin{enumerate}
  \item $s_2=\frac{1}{0}$
  \item If $n=2m+1$ is odd, then
$s_n=\zn_m(s_m)$.
  \item If $n=2m$ is even, then
$s_n=\zn_m(s_{m+1})$.
\end{enumerate}
\end{thm}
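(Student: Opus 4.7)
The plan is to proceed by strong induction on $n$ within the congruence class $\{n\ge 0 : n\equiv 2\pmod 3\}$, taking $n=2$ as the base case and applying Theorem~\ref{thm:conjugation1} in the inductive step. Before starting the induction I would record one preliminary observation: since translations act trivially on slopes, the (up to) three NET maps arising from the same virtual diagram $\bD_n$ share the same slope function and multiplier function, so it suffices to verify each assertion for a single representative $F_n$, and any obstruction slope we find is automatically common to all such $F_n$; the fact that a non-Euclidean NET map has a unique obstruction then justifies the notation $s_n$.

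For the base case $n=2$, I would first verify $\zm_{F_2}(1/0)=1/0$ using the twist relationship $\bD_2=M_2.\bD_0$ of line \eqref{line:twistmatrix}. Because post-composition satisfies $\zm_{F_2}=\zm_{F_0}\circ \zm_{M_2}$, direct application of the matrix formula $(a,b,c,d)\mapsto (a,-c,-b,d)$ for the action on slopes yields $\zm_{M_2}(1/0)=1/2$, after which the displayed computation $\zm_0(1/2)=1/0$ gives $\zm_{F_2}(1/0)=1/0$. Since $M_2$ is a homeomorphism of $(S^2,P)$, essential preimage components and their covering degrees transform by the same substitution, so $\zd_{F_2}(1/0)=\zd_{F_0}(1/2)$; checking that this is at least $1$ is a direct (but unavoidable) application of the algorithm of \cite[Section 5]{cfpp}, which in degree $2$ reduces to confirming that the essential preimage components have degree $1$.

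For the inductive step, fix $n\equiv 2\pmod 3$ with $n>2$. A quick congruence check shows the recursion stays inside the class: if $n=2m+1$ then $m\equiv 2\pmod 3$, and if $n=2m$ then $m+1\equiv 2\pmod 3$, and in both cases the smaller parameter is strictly less than $n$. Assume inductively that $F_{n'}$ is obstructed with obstruction slope $s_{n'}$ for every such $n'<n$. In the odd case $n=2m+1$, Theorem~\ref{thm:conjugation1} gives $\zj_m^{-1}F_m\zj_m\approx F_n$. Translation equivalence preserves slope and multiplier functions wholesale, while conjugation by the homeomorphism $\zj_m$ satisfies $\zm_{F_n}=\zm_{\zj_m}\circ \zm_{F_m}\circ \zm_{\zj_m}^{-1}$ and preserves essential-component counts and covering degrees; thus the fixed slopes with multiplier $\ge 1$ for $\zm_{F_n}$ are exactly the $\zn_m$-images of those for $\zm_{F_m}$, and by uniqueness we get $s_n=\zn_m(s_m)$. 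The even case $n=2m$ is strictly parallel, using the second half of Theorem~\ref{thm:conjugation1} together with the inductive hypothesis applied at $m+1$.

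The main obstacle is the single explicit multiplier computation $\zd_{F_0}(1/2)\ge 1$ in the base case: this is the only step that cannot be reduced to formal manipulation of transformation laws already recorded in the paper, and must be checked by running the pullback algorithm on $\bD_0$. Everything else in the argument is either a congruence check or a direct application of the conjugation/translation transformation formulas.
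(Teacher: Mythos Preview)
Your argument is correct and follows the paper's own proof almost verbatim: induction on $n$ within the class $n\equiv 2\pmod 3$, with the inductive step supplied by Theorem~\ref{thm:conjugation1} and the observation that conjugation by $\zj_m$ (and translation equivalence) transports obstructions and their slopes. The only difference is cosmetic and confined to the base case: the paper reads $\zm_{F_2}(1/0)=1/0$ with multiplier $1$ directly from the diagram $\bD_2$ in Figure~\ref{fig:prendgm}, whereas you route the same computation through the twist relation $\bD_2=M_2.\bD_0$ and the known value $\zm_0(1/2)=1/0$, reducing the multiplier check to $\zd_{F_0}(1/2)$; both verifications amount to the same degree-$2$ pullback calculation.
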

  \begin{proof} Using Figure~\ref{fig:prendgm}, it is easy to see for
every NET map arising from $\bD_2$ that every simple closed curve
$\zg$ with slope $\frac{1}{0}$ pulls back to a simple closed curve
with slope $\frac{1}{0}$.  Only one connected component of the
preimage is essential and nonperipheral, and it maps to $\zg$ with
degree 1.  So $\zg$ is an obstruction with multiplier 1.  This proves
that every NET map arising from $\bD_2$ is obstructed with obstruction
slope $s_2=\frac{1}{0}$.  

Now suppose $n\equiv 2\text{ mod } 3$.  Statements 2 and 3 follow from Theorem~\ref{thm:conjugation1}: conjugation by ${\zj_m}$ sends the obstruction of $F_m$ to the obstruction of $F_n$, and the presence of and location of obstructions is unaffected by composition with translations.  Finally, a straightforward induction argument proves that every NET
map arising from $\bD_n$ is obstructed.  This proves
Theorem~\ref{thm:obstrn}.

\end{proof}

Recall that if $\zv\in G$ is represented by
$(\left[\begin{smallmatrix}a & b \\ c & d
\end{smallmatrix}\right],t)\in \text{SL}(2,\bbZ)\ltimes
(\bbZ/2\bbZ)^2$, then $\zm_\zv$ is represented by
$\left[\begin{smallmatrix}a & -c \\ -b & d
\end{smallmatrix}\right]$.  Using this and
line~\ref{line:twistmatrix}, we find that
  \begin{equation*}
\zn_m=\zm_{\zv_1^m \zi}=\zm_\zi \zm_{\zv_1^m}\leftrightarrow
\left[\begin{matrix}0 & -1 \\ 1 & 0 \end{matrix}\right]
\left[\begin{matrix}-m+1 & m \\ -m & m+1 \end{matrix}\right]=
\left[\begin{matrix}m & -m-1 \\ -m+1 & m \end{matrix}\right].
\end{equation*}
This explicit form of $\zn_m$ and Theorem~\ref{thm:obstrn} allow for
the computation of $s_n$ for small values of $n$.  See
Table~\ref{table:small}.

\renewcommand{\arraystretch}{1.5}
\begin{table}
\begin{center}\begin{tabular}{c|cccccccccc}
$n$ & 2 & 5 & 8 & 11 & 14 & 17 & 20 & 23 & 26 & 29\\ \hline 
$s_n$ & $\frac{1}{0}$ & $\frac{-2}{1}$ & $\frac{-13}{10}$
 & $\frac{-16}{13}$ & $\frac{-171}{148}$ & $\frac{-194}{171}$
 & $\frac{-303}{274}$ & $\frac{-332}{303}$ & $\frac{-4295}{3976}$
 & $\frac{-4614}{4295}$\\
\end{tabular}\end{center}
\medskip
\caption{Some values of $s_n$}\label{table:small}
\end{table}

\textbf{Asymptotics of $s_n$.} We begin to estimate $s_n$ with the
following lemma.

\begin{lemma}\label{lemma:est} Let $n$ be an integer with $n\equiv
2\text{ mod }3$ and $n\ge 5$.  Then the following statements hold.
\begin{enumerate}
  \item $-2\le s_n<-1$
  \item If $n=2m+1$ for an integer $m$, then $m\height(s_m)\le
\height(s_n)\le 2(m+1)\height(s_m)$.
  \item If $n=2m$ for an integer $m$, then $m\height(s_{m+1})\le
\height(s_n)\le 2(m+1)\height(s_{m+1})$.
\end{enumerate}
\end{lemma}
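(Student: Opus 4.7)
The plan is a strong induction on $n$, proving all three statements simultaneously for $n \ge 5$ with $n \equiv 2 \pmod 3$. For the base case $n = 5$, applying $\zn_2 \leftrightarrow \left[\begin{smallmatrix} 2 & -3 \\ -1 & 2\end{smallmatrix}\right]$ to $s_2 = 1/0$ gives $s_5 = -2/1$; all three statements are then immediate (the height bound in (2) becomes $2 \le 2 \le 6$).

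For the inductive step at $n > 5$, I would use Theorem~\ref{thm:obstrn} to write $s_n = \zn_m(s_k)$, where $k = m = (n-1)/2$ if $n$ is odd and $k = m+1 = n/2+1$ if $n$ is even. A short congruence check confirms that $k \equiv 2 \pmod 3$ and $k \ge 5$, so the inductive hypothesis applies to $s_k$. Writing $s_k = -p/q$ in lowest terms with $p,q > 0$, statement (1) at $k$ yields $q < p \le 2q$, so that $\height(s_k) = p$. The explicit matrix $\zn_m \leftrightarrow \left[\begin{smallmatrix} m & -(m+1) \\ -(m-1) & m\end{smallmatrix}\right]$ computed just before Table~\ref{table:small} has determinant $1$, so $\zn_m\in\mathrm{SL}_2(\bbZ)$ preserves coprimality; applying it yields
\[s_n = -\frac{mp + (m+1)q}{(m-1)p + mq} =: -\frac{P}{Q},\]
already in lowest terms. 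Since $P - Q = p + q > 0$, we have $P > Q > 0$, hence $\height(s_n) = P$.

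The three statements then fall out by direct estimate: $s_n < -1$ because $P > Q$, and $s_n \ge -2$ because $2Q - P = (m-2)p + (m-1)q \ge 0$ whenever $m \ge 2$; and, using $q < p = \height(s_k)$,
\[m\cdot\height(s_k) \;=\; mp \;\le\; P \;\le\; mp + (m+1)p \;=\; (2m+1)p \;\le\; 2(m+1)\height(s_k).\]
There is no substantive obstacle here — the lemma is essentially a bookkeeping exercise with the explicit form of $\zn_m$. The one thing to watch is that the reduced index $k$ must lie in $\{n' \ge 5 : n' \equiv 2 \pmod 3\}$ for both parities, so that the inductive hypothesis supplies the crucial inequality $q < p$; this would fail at $k = 2$, but $k = 2$ occurs only in the already-handled base case $n = 5$.
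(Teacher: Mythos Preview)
Your proof is correct and follows essentially the same approach as the paper's: both use induction together with the explicit matrix for $\zn_m$ and Theorem~\ref{thm:obstrn}. The paper phrases statement~(1) slightly more conceptually---observing that $\zn_m$ sends the interval $(-\infty,-1]$ into $[-2,-1)$ by checking the images of the endpoints---and then says statements~(2) and~(3) follow ``easily'' from~(1); your explicit computation with $P=mp+(m+1)q$ and $Q=(m-1)p+mq$ simply unpacks that word.
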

  \begin{proof} Let $m$ be an integer with $m\ge 2$.  Using
the above explicit form of $\zn_m$, we find that
  \begin{equation*}
-2\le \frac{m}{-m+1}=\zn_m(\infty )\quad\text{ and }\quad
\zn_m(-1)=\frac{-2m-1}{2m-1}<-1.
  \end{equation*}
Since $\zn_m$ preserves orientation, it follows that $\zn_m$ maps the
interval $(-\infty,-1]$ into the interval $[-2,-1)$.  This,
Theorem~\ref{thm:obstrn} and induction prove statement 1.  Statement 1
and Theorem~\ref{thm:obstrn} easily imply statements 2 and 3.  This
proves Lemma~\ref{lemma:est}.

\end{proof}

Statements 2 and 3 of Lemma~\ref{lemma:est} lead to two more lemmas.
Here is the first of these.

\begin{lemma}\label{lemma:estf} Let $f\co (0,\infty )\to (0,\infty )$
be a function which is constant on the interval $(0,4)$ such that
$f(x)=K(\frac{x}{2}-1)f(\frac{x}{2}-1)$ for some positive real number
$K$ and $x\ge 4$.  Then there exists a positive real number $c$ such
that $f(x)>x^{\log_2(x)/2-c}$ for every sufficiently large real number $x$.
\end{lemma}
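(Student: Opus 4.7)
The plan is to iterate the functional equation until the argument lands in $(0,4)$, where $f$ is constant, and then take $\log_2$ to extract a quadratic-in-$\log_2 x$ main term.

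First, I would solve the auxiliary recursion $x_{n+1}=x_n/2-1$ starting from $x_0=x$. The shift $y_n:=x_n+2$ linearizes it to $y_{n+1}=y_n/2$, giving the closed form $x_n=(x+2)/2^n-2$. Let $N=N(x)$ be the smallest index with $x_N<4$, so
\[
N=\lceil \log_2((x+2)/6)\rceil = \log_2 x + O(1).
\]
Unrolling the hypothesis $f(x)=K(x/2-1)f(x/2-1)$ exactly $N$ times yields
\[
f(x)=K^N\, f_0\, \prod_{j=1}^N x_j,
\]
where $f_0>0$ is the constant value of $f$ on $(0,4)$.

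Next, I would lower-bound $S:=\sum_{j=1}^N \log_2 x_j$. For each $j\le N-1$ the condition $x_j\ge 4$ forces $(x+2)/2^j\ge 6$, and hence $x_j \ge \tfrac{2}{3}(x+2)/2^j$; taking logs gives $\log_2 x_j \ge \log_2(x+2)-j-\log_2(3/2)$. For the final term, $x_{N-1}\ge 4$ forces $x_N=x_{N-1}/2-1\ge 1$, so $\log_2 x_N\ge 0$. Summing arithmetically and using $N=\log_2 x+O(1)$ gives
\[
S \;\ge\; (N-1)\log_2(x+2)-\tfrac{N(N-1)}{2}-(N-1)\log_2\tfrac{3}{2}
\;=\; \tfrac{1}{2}(\log_2 x)^2+O(\log_2 x).
\]

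To finish, $\log_2 f(x)=N\log_2 K + S + \log_2 f_0$, and the terms outside $S$ together contribute at most $O(\log_2 x)$, so $\log_2 f(x)\ge \tfrac{1}{2}(\log_2 x)^2 - c\log_2 x$ for some $c>0$ and all sufficiently large $x$. Exponentiating gives $f(x)>x^{\log_2(x)/2-c}$ as required. There is no deep obstacle; the main bookkeeping challenge is propagating the $O(1)$ gap between $N$ and $\log_2 x$ through the quadratic main term $\tfrac{1}{2}N(N-1)$, which produces only linear error corrections that are absorbed into the additive constant $c$.
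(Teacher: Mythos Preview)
Your proof is correct and follows essentially the same strategy as the paper: iterate the recursion roughly $\log_2 x$ times until the argument falls into $(0,4)$, then lower-bound the resulting product of factors to extract the quadratic main term $\tfrac12(\log_2 x)^2$ in $\log_2 f(x)$. The only cosmetic difference is bookkeeping: the paper observes that each bracketed factor exceeds twice the next one and telescopes to $f(x)>LK^r2^{r(r-1)/2}$, whereas you solve the recursion in closed form via the shift $y_n=x_n+2$ and bound $x_j\ge\tfrac{2}{3}(x+2)2^{-j}$ directly---both routes give the same $x^{\log_2(x)/2-c}$ lower bound with the linear-in-$\log_2 x$ error absorbed into $c$.
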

  \begin{proof} The assumptions imply that there exists a positive
real number $L$ such that if $x\ge 4$, then
  \begin{equation*}
f(x)=L[K(\frac{x}{2}-1)][K(\frac{x}{4}-\frac{1}{2}-1)]
[K(\frac{x}{8}-\frac{1}{4}-\frac{1}{2}-1)]\cdots ,
  \end{equation*}
where the number $r$ of pairs of brackets is either $\left\lfloor
\log_2(x)\right\rfloor -1$ or $\left\lfloor \log_2(x)\right\rfloor
-2$.  The last bracketed term is at least $K$, and every bracketed
term is more than 2 times the succeeding term.  So
  \begin{equation*}
f(x) >LK^r2^{1+\cdots +(r-1)}=LK^r2^{r(r-1)/2}.
  \end{equation*}
Since $2^{r+3}>x$, it follows that $2^r>\frac{x}{8}$.  Hence
  \begin{equation*}
f(x)>LK^r8^{-(r-1)/2}x^{(r-1)/2}>x^{\log_2(x)/2-c}
  \end{equation*}
for some positive real number $c$ and every sufficiently large real
number $x$.

This proves Lemma~\ref{lemma:estf}.

\end{proof}

\begin{lemma}\label{lemma:estg} Let $g\co (0,\infty )\to (0,\infty )$
be a function which is constant on the interval $(0,4)$ such that
$g(x)=K(\frac{x}{2}+1)g(\frac{x}{2}+1)$ for some positive real number $K$
and $x\ge 4$.  Then there exists a positive real number $c$ such that
$g(x)<x^{\log_2(x)/2+c}$ for every sufficiently large real number $x$.
\end{lemma}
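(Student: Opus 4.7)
The plan is to mirror the proof of Lemma \ref{lemma:estf}: iterate the recursion to express $g(x)$ as an explicit product, bound each factor, and count the iterations.

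First I would set $x_0 := x$ and $x_{k+1} := \tfrac{x_k}{2}+1$. An easy induction gives the closed form $x_k = \tfrac{x-2}{2^k}+2$. Let $r$ be the first index with $x_r < 4$; from the closed form, $r \leq \log_2(x)+O(1)$ for $x$ large. Iterating the functional equation $r$ times yields
\[ g(x) \;=\; L\cdot K^r\cdot \prod_{k=1}^{r} x_k, \]
where $L$ is the constant value of $g$ on $(0,4)$.

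Second, I would bound the product. For $1\leq k\leq r-1$ the inequality $x_k\geq 4$ forces $(x-2)/2^k\geq 2$, hence $x/2^k\geq 2$, hence $x_k \leq x/2^k + 2 \leq x/2^{k-1}$. Together with $x_r<4$ this gives
\[ \prod_{k=1}^{r} x_k \;\leq\; 4\cdot \prod_{k=1}^{r-1}\frac{x}{2^{k-1}} \;=\; 4\cdot x^{r-1}\cdot 2^{-(r-1)(r-2)/2}. \]
Substituting $r\leq \log_2(x)+O(1)$, the $x^{r-1}$ factor contributes $x^{\log_2(x)+O(1)}$, while $2^{-(r-1)(r-2)/2}$ contributes $x^{-\log_2(x)/2+O(1)}$; the product is $x^{\log_2(x)/2+O(1)}$. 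Finally $K^r\leq x^{\log_2 K}$ absorbs into the exponent, and the constant factor $4L$ is subsumed for large $x$, giving $g(x)<x^{\log_2(x)/2 + c}$ for a suitable $c>0$ and all $x$ sufficiently large.

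The argument is essentially bookkeeping; the one subtle point is the interaction of the ``$+1$'' in the recursion with the final iterate $x_r$, which lies in $(0,4)$ rather than obeying the bound $x_k\leq x/2^{k-1}$. Handling this edge case by bounding $x_r$ separately by $4$ is what distinguishes this proof from the nearly identical one for Lemma \ref{lemma:estf}. No deeper obstacle is expected.
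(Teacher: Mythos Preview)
Your proof is correct and follows essentially the same approach as the paper: both iterate the recursion to write $g(x)$ as $L$ times a product of $r\approx\log_2 x$ factors $Kx_k$, bound each factor by roughly $2K\cdot x/2^k$, and then estimate the resulting product. The only cosmetic differences are that you package the iterates via the closed form $x_k=(x-2)/2^k+2$ and treat the final factor $x_r<4$ separately, whereas the paper writes each $x_k$ as $\tfrac{x}{2^k}$ plus a partial geometric sum and bounds all factors uniformly.
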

  \begin{proof} The assumptions imply that there exists a positive
real number $L$ such that if $x\ge 4$, then
  \begin{equation*}
g(x)=L[K(\frac{x}{2}+1)][K(\frac{x}{4}+\frac{1}{2}+1)]
[K(\frac{x}{8}+\frac{1}{4}+\frac{1}{2}+1)]\cdots ,
  \end{equation*}
where the number of pairs of brackets is $r=\left\lfloor
\log_2(x)\right\rfloor -1$.  Each of the terms of the form
$\frac{x}{2^k}$ is at least 2, which is greater than the 
finite geometric series which follows it.  So
  \begin{equation*}
g(x)<L(2K)^r\frac{x}{2}\frac{x}{4}\frac{x}{8}\cdots
=L(2Kx)^r2^{-1-2-3-\cdots -r} =L(2Kx)^r2^{-r(r+1)/2}.
  \end{equation*}
Since $2^{r+2}>x$, we have that $2^r>\frac{x}{4}$.  Hence
  \begin{equation*}
g(x)<L(2Kx)^r\left(\frac{4}{x}\right)^{(r+1)/2}
= 2^{2r+1}K^rLx^{(r-1)/2}<x^{\log_2(x)/2+c}
  \end{equation*}
for some positive real number $c$ and every sufficiently large real
number $x$.

This proves Lemma~\ref{lemma:estg}.

\end{proof}

Now we can estimate the height of $s_n$.

\begin{thm}\label{thm:est} There exists a positive real number $c$
such that
  \begin{equation*}
n^{\log_2(n)/2-c}<\height(s_n)<n^{\log_2(n)/2+c}
  \end{equation*}
for every sufficiently large positive integer $n$ such that $n\equiv
2\text{ mod }3$.
\end{thm}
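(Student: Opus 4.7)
The plan is to sandwich $\height(s_n)$ between two auxiliary real-valued functions, one furnished by Lemma \ref{lemma:estf} and one by Lemma \ref{lemma:estg}, with the sandwich driven inductively by Lemma \ref{lemma:est}.

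First I would construct the bounding functions.  Take $f\co(0,\infty)\to(0,\infty)$ to be the function supplied by Lemma \ref{lemma:estf} with $K=1$ and a small initial constant $L_f$, and $g\co(0,\infty)\to(0,\infty)$ to be supplied by Lemma \ref{lemma:estg} with $K=2$ and a large initial constant $L_g$; with these choices the defining identities become $f(x)=(x/2-1)\,f(x/2-1)$ and $g(x)=(x+2)\,g(x/2+1)$.  A routine induction on successive intervals of the form $[X,2X-2]$ shows that both $f$ and $g$ are non-decreasing on $(0,\infty)$: the function $f$ is in fact continuous, while $g$ has upward jump discontinuities at $x=4,6,10,18,\ldots$ that still preserve monotonicity.

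Next I would prove by induction on $n\equiv 2\pmod 3$ that, for every sufficiently large such $n$,
\[
f(n-1)\le \height(s_n)\le g(n),
\]
the base case being absorbed into the choice of $L_f$ and $L_g$.  For the upper bound in the odd case $n=2m+1$, Lemma \ref{lemma:est}(2) together with the inductive hypothesis gives $\height(s_n)\le(n+1)\,g((n-1)/2)$; monotonicity of $g$ and the identity $g(n)=(n+2)\,g(n/2+1)$ then dominate this by $g(n)$.  The even case, via Lemma \ref{lemma:est}(3), yields $\height(s_n)\le(n+2)\,g(n/2+1)=g(n)$ directly.  For the lower bound, write the identity as $f(n-1)=((n-3)/2)\,f((n-3)/2)$; this combines with $(n-1)/2\ge(n-3)/2$ in the odd case, and with $(n/2)f(n/2)\ge((n-3)/2)f((n-3)/2)$ in the even case, where the latter inequality uses monotonicity of $f$ applied to $n/2\ge (n-3)/2$.

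Finally, the asymptotic conclusions of Lemmas \ref{lemma:estf} and \ref{lemma:estg} furnish positive constants $c_f,c_g$ with $f(x)>x^{\log_2(x)/2-c_f}$ and $g(x)<x^{\log_2(x)/2+c_g}$ for all sufficiently large $x$.  Substituting $x=n-1$ and $x=n$ respectively, and absorbing the negligible discrepancy between $\log_2(n-1)$ and $\log_2(n)$ into the exponent, produces the advertised two-sided estimate for a suitable universal constant $c$.  The main anticipated obstacle is the verification that $g$ is non-decreasing: the natural value $K=2$ is precisely the threshold at which $g$ develops discontinuities, so one must check by hand that each discontinuity is an upward jump (equivalently $g(X)/g(X^-)\ge 1$ at every break point $X$), which follows by tracing the recursion back to the previous break point.
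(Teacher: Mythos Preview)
Your proof is correct and follows essentially the same approach as the paper: sandwich $\height(s_n)$ between an $f$ from Lemma~\ref{lemma:estf} (with $K=1$) and a $g$ from Lemma~\ref{lemma:estg} (with $K=2$), verify the sandwich inductively via the recursive bounds of Lemma~\ref{lemma:est}, and read off the asymptotics. The only cosmetic differences are that the paper proves $f(n)\le\height(s_n)\le g(n)$ with the fixed constant $1$ on $(0,4)$ rather than your $f(n-1)\le\height(s_n)\le g(n)$ with adjustable $L_f,L_g$, and the paper uses the monotonicity of $f$ and $g$ without the explicit discussion of the jump discontinuities that you (rightly) flag.
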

  \begin{proof} To obtain the lower bound on $\height(s_n)$, we prove
that there exists a function $f$ as in Lemma~\ref{lemma:estf} such
that $\height(s_n)\ge f(n)$ for every nonnegative integer $n$ with
$n\equiv 2\text{ mod } 3$.  
Define $f\co (0,\infty) \mapsto (0,\infty)$ recursively by
$f(x) = 1$ if $x\in (0,4)$ and $f(x) = (\frac{x}{2}-1) f(\frac{x}{2}-1)$ if
$x\ge 4$.
We have that $
\height(s_2)\ge f(2)$ because $\height(s_2)=1=f(2)$.  Now suppose that
$n=2m+1$, that $m\equiv 2\text{ mod }3$, that $m\ge 2$ and that
$\height(s_m)\ge f(m)$.  Using Lemma~\ref{lemma:est} and the fact that
$f$ is increasing,
  \begin{equation*}
\height(s_n)\ge m\height(s_m)\ge mf(m)=f(2m+2)\ge f(2m+1)=f(n),
  \end{equation*}
as desired.  If $n=2m$ and $\height(s_{m+1})\ge f(m+1)$, then
  \begin{equation*}
\height(s_n)\ge m\height(s_{m+1})\ge mf(m+1)\ge mf(m)=f(2m+2)\ge
f(2m)=f(n).
  \end{equation*}
This establishes the lower bound on $\height(s_n)$.  

For the upper bound, we apply Lemma~\ref{lemma:estg} with the function
$g$ for which $K=2$ and $g(x)=1$ for $x\in (0,4)$.  Because the
argument for the upper bound is so similar to the argument for the
lower bound, we simply give the estimates which establish the upper
bound:
  \begin{align*}
\height(s_n)&\le 2(m+1)\height(s_m)\le 2(m+1)g(m)\\
 &\le 2(m+1)g(m+1)= g(2m)\le g(2m+1)=g(n)
  \end{align*}
and
  \begin{equation*}
\height(s_n)\le 2(m+1)\height(s_{m+1})\le 2(m+1)g(m+1)
 = g(2m)=g(n).
  \end{equation*}

This completes the proof of Theorem~\ref{thm:est}.

\end{proof}

Theorem~\ref{thm:est} shows that the function $H(C)$ in
Theorem~\ref{thm:finite} cannot be taken to be a polynomial in $C$
even if we restrict attention to only negative reciprocals of slopes
of obstructions.  This completes the proof of
Theorem~\ref{thm:finite}.

\end{document}